\newtheorem {theorem}{Theorem}[section]
\newtheorem {lemma}[theorem]{{\bf Lemma}}
\newtheorem {proposition}[theorem]{\bf Proposition}
\newtheorem {prop}[theorem]{{\bf Proposition}}
\theoremstyle{remark}
\newtheorem {remark}{{\bf Remark}}[section]
\theoremstyle{problem}
\theoremstyle{definition}
\newtheorem {definition}{{\bf Definition}}[section]
\theoremstyle{plain} \numberwithin {equation}{section}
\begin{document}
\vspace{1cm}

\title[Incompressible impinging jet flow with gravity]{Incompressible impinging jet flow with gravity$^*$}
\author[Jianfeng Cheng,\\Lili Du,\\ Zhouping Xin]{Jianfeng Cheng$^1$,\ \ Lili Du$^1$,\ \ Zhouping Xin$^2$}

\thanks{$^*$Cheng and Du are supported in part by NSFC grant 11571243, 11622105. Xin is supported in part by the Zheng Ge Ru
foundation, Hong Kong RGC Grants: CUHK-14300917, CUHK14305315 and
CUHK-4048/13P, NSFC/RGC Joint Research Grant N-CUHK 443/14, and a
Focus Area Grant from The Chinese University of Hong Kong}
\thanks{ E-Mail: jianfengcheng@126.com (J. Cheng),  dulili@scu.edu.cn (L. Du), zpxin@ims.cuhk.edu.hk (Z. Xin)}
 \maketitle
\begin{center}
$^1$ Department of Mathematics, Sichuan University,

          Chengdu 610064, P. R. China.

$^2$ The Institute of Mathematical Sciences,

The Chinese University of Hong Kong,

Shatin, N.T., Hong Kong.

\end{center}

\begin{abstract} In this paper, we investigate steady
two-dimensional free-surface flows of an inviscid and incompressible
fluid emerging from a nozzle, falling under gravity and impinging
onto a horizontal wall. More precisely, for any given atmosphere
pressure $p_{atm}$ and any appropriate incoming total flux $Q$, we
establish the existence of two-dimensional incompressible impinging
jet with gravity. The two free surfaces initiate smoothly at the
endpoints of the nozzle and become to be horizontal in downstream.
By transforming the free boundary problem into a minimum problem, we
establish the properties of the flow region and the free boundaries.
Moreover, the asymptotic behavior of the impinging jet in upstream
and downstream is also obtained.

\end{abstract}

\

\begin{center}
\begin{minipage}{5.5in}
2010 Mathematics Subject Classification: 76B10; 76B03; 35Q31; 35J25.

\

Key words: Incompressible impinging jet, gravity,  existence, free
boundary.
\end{minipage}
\end{center}

\

\everymath{\displaystyle}
\newcommand {\eqdef }{\ensuremath {\stackrel {\mathrm {\Delta}}{=}}}

\tableofcontents

% math symbols
\def\Xint #1{\mathchoice
{\XXint \displaystyle \textstyle {#1}} %
{\XXint \textstyle \scriptstyle {#1}} %
{\XXint \scriptstyle \scriptscriptstyle {#1}} %
{\XXint \scriptscriptstyle \scriptscriptstyle {#1}} %
\!\int}
\def\XXint #1#2#3{{\setbox 0=\hbox {$#1{#2#3}{\int }$}
\vcenter {\hbox {$#2#3$}}\kern -.5\wd 0}}
\def\ddashint {\Xint =}
\def\dashint {\Xint -}
\def\clockint {\Xint \circlearrowright } % GOOD !
\def\counterint {\Xint \rotcirclearrowleft } % Good for Computer Modern !
\def\rotcirclearrowleft {\mathpalette {\RotLSymbol { -30}}\circlearrowleft }
\def\RotLSymbol #1#2#3{\rotatebox [ origin =c ]{#1}{$#2#3$}}

\def\aint{\dashint}

\def\arraystretch{2}
\def\eps{\varepsilon}

\def\s#1{\mathbb{#1}} % set
\def\t#1{\tilde{#1}} %new variables
\def\b#1{\overline{#1}}
\def\N{\mathcal{N}} %Nozzle
\def\M{\mathcal{M}} %Mach number
\def\R{{\mathbb{R}}}
\def\B{{\mathcal{B}}}
\def\BB{\mathfrak{B}}
\def\F{{\mathcal{F}}}
\def\G{{\mathcal{G}}}
\def\ba{\begin{array}}
\def\ea{\end{array}}
\def\be{\begin{equation}}
\def\ee{\end{equation}}

\def\bes{\begin{mysubequations}}
\def\ees{\end{mysubequations}}

\def\cz#1{\|#1\|_{C^{0,\alpha}}}
\def\ca#1{\|#1\|_{C^{1,\alpha}}}
\def\cb#1{\|#1\|_{C^{2,\alpha}}}
\def\psir{\left|\frac{\nabla\psi}{r}\right|^2}
\def\lb#1{\|#1\|_{L^2}}
\def\ha#1{\|#1\|_{H^1}}
\def\hb#1{\|#1\|_{H^2}}
\def\th{\theta}
\def\Th{\Theta}
\def\cin{\subset\subset}
\def\Ld{\Lambda}
\def\ld{\lambda}
\def\ol{{\Omega_L}}
\def\sla{{S_L^-}}
\def\slb{{S_L^+}}
\def\e{\varepsilon}
\def\C{\mathbf{C}} %%unit cylinder
\def\cl#1{\overline{#1}}
\def\ra{\rightarrow}
\def\xra{\xrightarrow}
\def\g{\nabla}
\def\a{\alpha}
\def\b{\beta}
\def\d{\delta}
\def\th{\theta}
\def\fai{\varphi}
\def\O{\Omega}
\def\ol{{\Omega_L}}
\def\psirk{\left|\frac{\nabla\psi}{r+k}\right|^2}
\def\tO{\tilde{\Omega}}
\def\tu{\tilde{u}}
\def\tv{\tilde{v}}
\def\trho{\tilde{\rho}}
\def\W{\mathcal{W}}
\def\f{\frac}
\def\p{\partial}
\def\m{\omega}
\def\B{\mathcal{B}}
\def\H{\Theta}
\def\msS{\mathscr{S}}
\def\bq{\mathbf{q}}
\def\msE{\mathcal{E}}
\def\mfa{\mathfrak{a}}
\def\mfb{\mathfrak{b}}
\def\mfc{\mathfrak{c}}
\def\mfd{\mathfrak{d}}
\def\Div{\text{div}}
\def\Rot{\text{rot}}
\def\Curl{\text{curl}}
\def\mcL{\mathcal{L}}
\def\mcR{\mathcal{R}}
\def\f{\frac}
\def\p{\partial}
\def\o{\omega}
\def\h{_2^{\frac{1}{2}}}
\def\hh{_2^2}
\def\hhh{_2^{\frac{2}{3}}}
\def\k{_2^{\frac{3}{2}}}
\def\ii{\int_{0}^{t}\int}
\def\xiao{\leq}

\section{Introduction and main results}

\subsection{Introduction}

The problem describing a jet emerging from a nozzle and impacting on
a solid wall is important. It possesses the key ingredients of a
challenging mathematical problem with numerous practical
applications, such as the industrial process of fabricating glassy
metals, a jet of molten metal emerging from a nozzle is directed
onto a moving plate and develops a glassy structure due to the rapid
cooling. The more obvious industrial applications involve a
Vertical/Short Takeoff and Landing (V/STOL) aircraft, terrestrial
rocket launch, the formulation of continuous fibers from molten
liquid extruded through an orifice \cite{GI}. Although these
practical applications are different in many respects, the one
underlying feature is the fundamental fluid mechanics of the
impinging jet. Consequently, an accurate but mathematically
tractable model to describe the essential features of the impinging
jet would be desirable.

 The impinging jet problem has been studied from the computational
 and experimental expects, by Dias-Elcrat and Trefethen, who
 presented an efficient procedure for solving the impinging jet
 problem numerically in \cite{DEF}. See also in \cite{JB} for a numerical solution to
 the incompressible impinging jet in the absence of gravity. King and
 Bloor introduced a method in \cite{KB} for determining the free streamline of a
 free jet of ideal weightless fluid impacting on a wall. We also would
 like to refer the surveys \cite{BZ} by Birkhoff and Zarantonello,
 \cite{JA} by Jacob, \cite{Gu} by Gurevich,  \cite{MT} by
 Milne-Thomson, \cite{Wu} by Wu for the mathematical theory on jets of ideal fluid.

 The first systematic existence theory on the incompressible
 impinging jet of ideal weightless fluids was mentioned in the
 monograph \cite{FA1} of Friedman (Page 365 and Page 416), and some existence results of an impinging jet emerging
 from a two-dimensional nozzle and impacting on an infinite wall has
 been established in an unpublished paper by Caffarelli and Friedman.
 Very recently, Cheng, Du and Wang in \cite{CDW} extended the existence and non-existence
 results
 to the oblique impinging jet. The "oblique" means that the orifice
 of the nozzle is not parallel to the rigid wall. The main idea
 follows from the techniques introduced by Alt, Caffarelli and Friedman in \cite{AC1,ACF1,ACF4}, the original physical problem was transformed into a
 minimum problem, and then the existence of incompressible jet and
 the properties of the free boundary is established. The main
 purpose of this paper is to extend the existence result by
 Caffarelli and Friedman to the case where the gravity field is
 present. The analysis of jets in the presence of gravity is
 difficult because of the strong nonlinearity of the dynamic
 boundary  condition that must be satisfied on both unknown free
 surfaces bounding the jet. In this paper, we consider the steady irrotational flow
 of an incompressible inviscid fluid emerging from a nozzle, falling
 vertically under the effects of the gravity and impinging on a rigid
 wall. The total incoming flux and atmospheric pressure are imposed,
 and some existence results on incompressible impinging jets under
 gravity are established.

 Another motivation to investigate the impinging jet problem under
 gravity comes from the classical result on a falling jet under
 gravity without the horizontal plate in the significant work \cite{ACF2} by Alt,
 Caffarelli and Friedman. As mentioned in Page 59 in \cite{ACF2},

 {\it "The mathematical literature on jets with gravity is very meager.
 The reason for this is that the hodograph method which has been
 successfully used in steady 2-dimensional problems for jets and cavities without gravity cannot be extended to
 the case where gravity is present."}

  They considered the
 incompressible jet under gravity emerging from a channel in axially
 symmetric case and two-dimensional asymmetric case, assuming the
 height of the channel to be infinite. In the axially symmetric case, existence and uniqueness of an incompressible jet with gravity were established, and
 surprisingly, some non-existence result on asymmetric jet under
 gravity was also obtained in Section 14 in \cite{ACF2}. They gave a
 counterexample to the existence of asymmetric jets under gravity,
 and showed that in general the two free streamlines can not connect
 the endpoints at the same time. Nevertheless, in this paper, we
 consider the jet emerging from the nozzle with finite height, which
 does not fall into the case of their counterexamples, and try to seek a
 mechanism to establish the well-posedness theory of the asymmetric
 jets under gravity. This seems to be the first well-posedness result on
 asymmetric jets under gravity.

 \subsection{Statement of the physical problem}

The problem we will address is that the two-dimensional steady flow
of an incompressible inviscid fluid emerges from a semi-infinitely
long nozzle, and impinges onto the ground. We shall take into
account of the gravity but neglect all other forces, such as surface
tension and air resistance.

The situation is shown in Figure \ref{f1}. $N: y=0$ is denoted as
the ground, and an open semi-infinite nozzle is bounded by the
nozzle walls $N_1$ and $N_2$. $N_i$ is bounded by $x=g_i(y)\in
C^{2,\alpha}[H,H_i)$ with $g_1(y)<g_2(y)$ and $H<H_1<H_2$, $i=1,2$.
$H$ is the distance between the orifice of the nozzle and the
ground. Without loss of generality, we assume that
$$g_i(H)=(-1)^{i},\quad\quad \lim_{y\rightarrow
H_i^-}g_i(y)=-\infty\ \ \text{ and the width of the orifice is 2}.$$
Denote $A_1=(-1,H)$ and $A_2=(1,H)$ be the endpoints of the
semi-infinite nozzle. The gravity acts in the negative
$y$-direction.

\begin{figure}[!h]
\includegraphics[width=100mm]{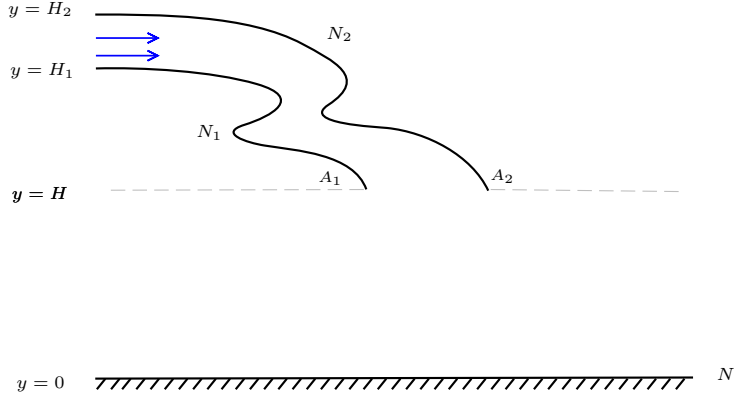}
\caption{The nozzle walls and the ground}\label{f1}
\end{figure}

The resulting dynamics of the impinging jet is governed by the
continuity equation and momentum equations in two dimensions,
\be\label{a3} \left\{ \ba{l}
u_x+v_y=0,\\
uu_x+vu_y+p_x=0,\\
uv_x+vv_y+p_y=-g.
 \ea \right.\ee
Here, $(u,v)$ is the velocity field, $p$ is the pressure of the
incompressible fluid, and $g$ is the acceleration due to the
gravity. The irrotational condition is written as
\be\label{a4}v_x-u_y=0.\ee

The nozzle walls $N_1\cup N_2$ and the ground $N$ are assumed to be
impermeable, and then the velocity satisfies the following no-flow
condition, \be\label{a6}(u,v)\cdot\vec{n}=0\ \ \text{on $N\cup
N_1\cup N_2$},\ee where $\vec n$ is the outer normal to the
boundaries.

In this paper, we will seek an impinging jet acted on by gravity and
with two free streamlines, and the free streamlines initiate from
the endpoints $A_1$ and $A_2$ of the nozzle, and extend to infinity.

Let $\Gamma_1$ and $\Gamma_2$ be the free streamlines connecting the
nozzle walls $N_1$ and $N_2$, respectively. However, the location of
the free streamlines is not known a priori. If the surface tension
is negligible, the pressure on the free surface should be the
constant atmospheric value $p_{atm}$. $\O_0$ denotes the flow region
(which is unknown), bounded by the nozzle wall $N_1\cup N_2$, the
free boundaries $\Gamma_1\cup\Gamma_2$ and the ground $N$ (see
Figure \ref{f2}).

\begin{figure}[!h]
\includegraphics[width=100mm]{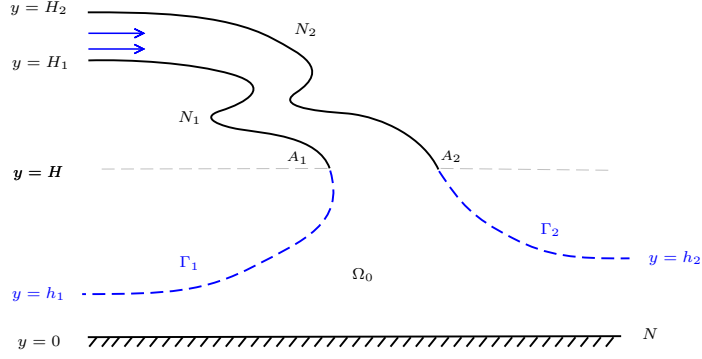}
\caption{Impinging jet flow under gravity}\label{f2}
\end{figure}

Furthermore, the following notations will be used,
\be\label{a10}\begin{aligned}&\text{$Q$~:~~the total incoming flux
in upstream},\\ &\text{$Q_1$~:~~the effluent flux in negative
$x$-direction},\\
&\text{$Q_2$~:~~the effluent flux in positive
$x$-direction}.\end{aligned}\ee It should be noted that the total
incoming flux $Q=Q_1+Q_2$ is imposed in our problem and the
quantities $Q_1$ and $Q_2$ are unknown a priori, which can be
determined by the solution itself.

Now the impinging jet flow problem can be reformulated as: For any
given total mass flux $Q$, determine an incompressible flow
$(u,v,p)$ with free streamlines $\Gamma_1$ and $\Gamma_2$ connecting
smoothly at the endpoints $A_1$ and $A_2$, on which the pressure is
the given atmospheric pressure $p_{atm}$.

More precisely, a solution to the incompressible impinging jet
problem is defined as follows.
 %It
%follows from \eqref{a3} that
%\be\label{a5}(u,v)\cdot\g\left(\f{u^2+v^2}2+p+gy\right)=0\ \
%\text{in the fluid field}.\ee

%In this paper, we will seek an incompressible impinging jet with
%gravity satisfying the following properties.

\begin{definition}\label{def1} (a solution to the impinging jet
problem) A vector $(u,v,p,\Gamma_1,\Gamma_2)$ is called a solution
to the impinging jet problem, provided that the following properties
hold:

{\bf Property 1.} The free streamlines $\Gamma_1$ and $\Gamma_2$ can
be described by $C^1$-smooth functions $x=k_1(y)$ and $x=k_2(y)$,
respectively, and there exist two constants $h_1,h_2\in (0,H)$, such
that
$$\lim_{y\rightarrow h_1^+}k_1(y)=-\infty\ \ \text{and}\ \ \lim_{y\rightarrow h_2^+}k_2(y)=+\infty.$$
%The constant $h_1$ and $h_2$ are called the asymptotic widths of the
%impinging jet in left and right, respectively. Furthermore,

{\bf Property 2.} The free boundaries $\Gamma_1$ and $\Gamma_2$ are
analytic, and satisfy \be\label{a11}g_1(H)=k_{1}(H)=-1 \ \
\text{and}\ \ g_2(H)=k_{2}(H)=1,\ \ee and
\be\label{a12}g'_1(H+0)=k'_{1}(H-0)\ \ \text{and} \ \
g'_2(H+0)=k'_{2}(H-0).\ee

{\bf Property 3.} $(u,v,p)\in \left(C^{1,\alpha}(\O_0)\cap
C^{0}(\overline{\O}_0)\right)^3$ solves the steady incompressible
Euler system \eqref{a3}, the irrotational condition \eqref{a4} and
the boundary condition \eqref{a6}.

{\bf Property 4.} $p=p_{atm}$ on $\Gamma_1$ and $\Gamma_2$.

\end{definition}

\begin{remark} The constants $h_1$ and $h_2$ are indeed the {\it asymptotic
widths} of the impinging jet in left and right downstream,
respectively. The property 1 in Definition \ref{def1} implies that
the free boundaries $\Gamma_1$ and $\Gamma_2$ can not oscillate in
downstream.
\end{remark}

\begin{remark} The conditions \eqref{a11} and \eqref{a12} are so-called {\it continuous fit conditions}  and {\it smooth fit conditions} for the impinging jet,
 which mean that
the free boundaries connect the nozzle walls smoothly.
\end{remark}

\begin{remark}The constant atmosphere pressure $p_{atm}$ is imposed
arbitrarily in advance.
\end{remark}

\subsection{Main results} The main results of this paper are stated as
follows. The first one is on the existence.
\begin{theorem}\label{the1} For any given atmosphere pressure $p_{atm}$
and total incoming flux $Q>2\sqrt{gH^3}$, there exists a solution
$(u,v,p,\Gamma_1,\Gamma_2)$ to the incompressible impinging jet
problem with gravity. Furthermore,\\
(1) the vertical velocity of the impinging jet is negative, namely,
$v<0$ in $\O_0\cup\Gamma_1\cup\Gamma_2$.\\ (2) there exists a unique
smooth streamline $\Gamma:x=k(y)$, which separates the impinging jet
with two different downstream,  and $\Gamma$ goes to the inlet of
the nozzle and intersects the ground $N$ at the unique point $S$
(see Figure \ref{f3}).\\
(3) The streamline $\Gamma$ is perpendicular to the ground $N$ at
$S$, namely, $k'(0+0)=0$.

\end{theorem}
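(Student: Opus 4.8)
The plan is to recast this free-boundary problem as a one-phase variational problem of Alt--Caffarelli--Friedman type, solve it, and then read off the qualitative properties (1)--(3) by potential theory. I introduce the stream function $\psi$ via $u=\psi_y$ and $v=-\psi_x$, so that the continuity equation in \eqref{a3} holds automatically and the irrotationality \eqref{a4} becomes $\Delta\psi=0$ in the unknown flow region $\Omega_0$. The no-flow condition \eqref{a6} forces $\psi$ to be constant on each boundary component, and I normalize $\psi=0$ on $N_1\cup\Gamma_1$ and $\psi=Q$ on $N_2\cup\Gamma_2$. Bernoulli's law for \eqref{a3}, namely $\tfrac12|\nabla\psi|^2+p+gy=\text{const}$ along streamlines, together with $p=p_{atm}$ on $\Gamma_1\cup\Gamma_2$, turns the dynamic condition into the height-dependent free-boundary relation
\[
|\nabla\psi|^2=\lambda^2(y),\qquad \lambda^2(y):=2(\,\text{const}-p_{atm}-gy\,),
\]
in which the explicit dependence on $y$ is exactly the gravity effect that rules out the hodograph method.

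Next I would truncate the semi-infinite nozzle at a large height to obtain bounded domains $\Omega_L$ and minimize a functional of the form
\[
J_L(\phi)=\int_{\Omega_L}\Big(|\nabla\phi|^2+\lambda^2(y)\,\chi_{\{\phi\ \text{in the air region}\}}\Big)\,dx\,dy
\]
over an admissible class fixing the flux $Q$ and the boundary data, where the two saturated regions $\{\phi\le0\}$ (left of $\Gamma_1$) and $\{\phi\ge Q\}$ (right of $\Gamma_2$) are the air. Standard theory for such functionals yields a Lipschitz minimizer whose free boundary is analytic and carries $|\nabla\psi|=\lambda(y)$; sending $L\to\infty$ and recovering $(u,v,p)$ from $\psi$ produces a solution in the sense of Definition \ref{def1}. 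The continuous and smooth fit \eqref{a11}--\eqref{a12} are then obtained by tuning the Bernoulli constant and running a monotonicity and nondegeneracy analysis of the detachment of the free boundary at $A_1,A_2$; I expect this, together with the gravity term, to be one principal obstacle.

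For part (1) I note that $v=-\psi_x$ is itself harmonic in $\Omega_0$, so the strategy is a maximum-principle argument. On the ground $v\equiv0$ by \eqref{a6}; along the nozzle walls and at the upstream inlet the incoming flux is directed downward, giving $v\le0$; and on the two free streamlines $v\le0$ follows from the downstream horizontal asymptotics (where $v\to0^-$) combined with the monotone graph structure $x=k_i(y)$ of $\Gamma_i$. Granting $v\le0$ on all of $\partial\Omega_0$, the strong maximum principle forces $v<0$ throughout $\Omega_0$, and the Hopf boundary-point lemma upgrades this to $v<0$ on the analytic arcs $\Gamma_1,\Gamma_2$. Verifying the sign of $v$ on the free boundaries is the most delicate point and is where the previously established asymptotic behavior must be invoked; this is the main obstacle, since parts (2) and (3) rest on it.

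Parts (2) and (3) then follow almost formally. Since $v<0$ is equivalent to $\psi_x>0$ in the open region, $\psi$ is strictly increasing in $x$ on every horizontal line $\{y=c\}$ with $c>0$, so each level set meets such a line at most once; in particular the level set $\{\psi=Q_1\}$ through the ground value $Q_1:=\psi|_{N}$ is a single graph $x=k(y)$, which is the unique dividing streamline $\Gamma$ separating the two downstreams and which descends to a unique point $S$ on $N$. At $S$ one has $\nabla\psi(S)=0$, so $S$ is a stagnation point. For part (3) I reflect $\psi-Q_1$ oddly across $\{y=0\}$ (it vanishes there since $N$ is the streamline $\psi=Q_1$) to get a harmonic function vanishing on $\{y=0\}$ with a critical point at $S$; the lowest-order nonvanishing term of its expansion must then be $a(x-x_S)y$ with $a\neq0$ by nondegeneracy of the stagnation point, whose zero set is $\{y=0\}\cup\{x=x_S\}$ to leading order. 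Hence the branch of $\{\psi=Q_1\}$ other than the ground is vertical at $S$, i.e. $k'(0+0)=0$, which is the asserted perpendicularity.
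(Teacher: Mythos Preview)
Your overall architecture is right---stream function, Alt--Caffarelli--Friedman variational problem, then potential theory for (1)--(3)---but there are two genuine gaps.

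\textbf{First, you need two free parameters, not one.} You propose to achieve the continuous fit at $A_1$ and $A_2$ by ``tuning the Bernoulli constant''. One scalar cannot in general force two independent detachment conditions; indeed, Alt--Caffarelli--Friedman showed that for asymmetric gravity jets with a single tuning parameter, continuous fit at both endpoints typically fails. The paper's key observation is that the value $Q_1=\psi|_N$ is \emph{not} determined a posteriori by the solution but is a second free parameter (the split of the total flux between the two downstreams). One then solves the truncated variational problem for each pair $(\lambda,Q_1)$, proves that the initial points $k_{i,\lambda,Q_1,\mu}(H)$ depend continuously and monotonically on these parameters, and selects $(\lambda,Q_1)$ so that both fit conditions hold. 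A further step is to exclude the endpoint cases $Q_1\in\{0,Q\}$. Your proposal treats $Q_1$ as output rather than input, which leaves the fit argument with a missing degree of freedom.

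\textbf{Second, your route to $v\le 0$ is circular.} You want to check $v\le 0$ on $\partial\Omega_0$ and apply the maximum principle; but on the free streamlines you invoke ``the monotone graph structure $x=k_i(y)$ of $\Gamma_i$'', which is itself equivalent to $\psi_x\ge 0$ there---the very thing to be proved. The paper avoids this by establishing $\psi_x\ge 0$ \emph{globally} straight from the variational structure: one compares the minimizer $\psi$ with its horizontal translate $\psi(\cdot-\varepsilon,\cdot)$, shows $\max\{\psi,\psi(\cdot-\varepsilon,\cdot)\}$ and $\min\{\psi,\psi(\cdot-\varepsilon,\cdot)\}$ are admissible with equal total energy, and concludes monotonicity (and uniqueness) by the strong maximum principle. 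Only after this does one upgrade to $v<0$ in the interior, and then on $\Gamma_i$ via Hopf combined with the tangential derivative of the Bernoulli relation $|\nabla\psi|^2=2\lambda-2gy$ (your Hopf step alone is not enough; the contradiction comes from differentiating the free-boundary condition along $\Gamma_i$). Once $\psi_x>0$ is in hand, your arguments for (2) and (3)---uniqueness of the level curve $\{\psi=Q_1\}$ as a graph and the odd-reflection/nodal-set analysis at $S$---match the paper's, although you should argue that the expansion at $S$ has degree exactly two (only one branch enters $\{y>0\}$, so more than two nodal arcs is impossible).

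A smaller point: the truncation is in the $x$-direction (at $x=\pm\mu$), not ``at a large height''; the nozzle already has finite height.
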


\begin{figure}[!h]
\includegraphics[width=100mm]{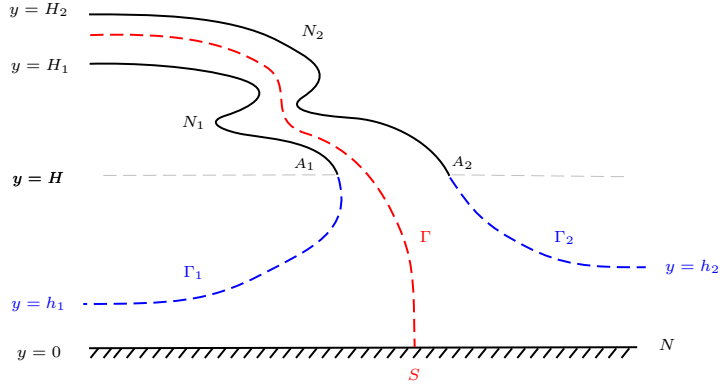}
\caption{The interface $\Gamma$}\label{f3}
\end{figure}

Next, we will give the asymptotic behavior of the incompressible
impinging jet under gravity.

\begin{theorem}\label{the2} Assume that there exists a large $R_0>1$,
such that the nozzle wall $N_i$ can be described by $y=g_i^{-1}(x)$
for $x<-R_0$, $i=1,2$, then the impinging jet flow obtained in
Theorem \ref{the1} satisfies the following asymptotic behavior in
downstream, \be\label{a17}\left(u,v,p\right)\rightarrow
\left(-\f{Q_1}{h_1},0, p_1(y)\right),\ \g (u,v)\rightarrow 0 \
\text{and}\ \ \g p\rightarrow \left(0,-g\right)\ee
 uniformly in any compact subset of $(0,h_1)$ as $
x\rightarrow -\infty$, and
\be\label{a18}\left(u,v,p\right)\rightarrow \left(\f{Q-Q_1}{h_2},0,
p_2(y)\right),\ \g (u,v)\rightarrow 0\ \text{and}\ \ \g p\rightarrow
\left(0,-g\right)\ee
 uniformly in any compact subset of $(0,h_2)$ as $
x\rightarrow +\infty$, where $Q_1$ is determined uniquely by
$$\f{Q_1^2}{h_1^2}+2gh_1=\f{(Q-Q_1)^2}{h_2^2}+2gh_2,$$
$p_1(y)=p_{atm}+g(h_1-y)$ for $y\in(0,h_1)$ and
$p_2(y)=p_{atm}+g(h_2-y)$ for $y\in(0,h_2)$.

Similarly, in upstream, \be\label{a19}\left(u,v,p\right)\rightarrow
\left(\f{Q}{H_2-H_1},0, p_0(y)\right),\ \g (u,v)\rightarrow 0\
\text{and}\ \ \g p\rightarrow \left(0,-g\right)\ee
 uniformly in any compact subset of $(H_1,H_2)$ as $
x\rightarrow -\infty$, where
$p_0(y)=p_{atm}+g(h_1-y)+\f{Q_1^2}{2h_1^2}-\f{Q^2}{2(H_1-H_2)^2}$
for $y\in(H_1,H_2)$.

\end{theorem}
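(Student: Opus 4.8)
The plan is to carry out the whole analysis in terms of the stream function $\psi$ of the flow produced in Theorem \ref{the1}, defined by $u=\partial_y\psi$, $v=-\partial_x\psi$, so that incompressibility and irrotationality give $\Delta\psi=0$ in $\Omega_0$ while each solid boundary and each free streamline is a level curve of $\psi$. Since the flow is irrotational, the Bernoulli quantity $B:=\frac12|\nabla\psi|^2+p+gy$ is a \emph{single} constant on the connected region $\Omega_0$, and on $\Gamma_1\cup\Gamma_2$, where $p=p_{atm}$, this reads $|\nabla\psi|^2=\Lambda-2gy$ with $\Lambda:=2(B-p_{atm})$. The fact that one and the same constant $\Lambda$ governs the speed on both free surfaces, together with the global constant $B$, is exactly what will force the matching relation for $Q_1$ and the three limiting pressures. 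I would then treat the left downstream ($x\to-\infty$, $0<y<h_1$), the right downstream ($x\to+\infty$, $0<y<h_2$) and the upstream ($x\to-\infty$, $H_1<y<H_2$) by one common translation-and-compactness scheme.

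For the left downstream, fix a window $\{|x|<R\}$ and set $\psi_n(x,y):=\psi(x+x_n,y)$ for a sequence $x_n\to-\infty$. The uniform bound $|\nabla\psi|\le C$ and the non-degeneracy of $\psi$ near the free boundary (both coming from the construction behind Theorem \ref{the1}) yield uniform $C^{1,\alpha}$ estimates up to the free boundary, so that along a subsequence $\psi_n\to\psi_\infty$ in $C^{1,\alpha}_{loc}$ and the translated free boundaries converge. Because Part (1) of Theorem \ref{the1} gives $v<0$, i.e. $\partial_x\psi>0$, the level set $\Gamma_1$ is a graph $x=k_1(y)$ along which $y$ decreases monotonically to $h_1$ as $x\to-\infty$ (this is precisely the non-oscillation recorded in Property 1), so the translated free boundaries flatten to the horizontal line $\{y=h_1\}$ and the limiting domain is the full strip $\{0<y<h_1\}$. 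On this strip $\psi_\infty$ is harmonic, equals $0$ on $\{y=0\}$ and a constant on $\{y=h_1\}$ with $\int_0^{h_1}\partial_y\psi_\infty\,dy=-Q_1$; a Liouville argument (odd reflection across both edges reduces a bounded-gradient harmonic function vanishing on both lines to an affine one) forces $\psi_\infty$ to be affine in $y$ and $x$-independent. Interior Schauder estimates then upgrade the convergence to $C^2_{loc}$, giving $\nabla(u_n,v_n)\to0$.

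Granting one-dimensionality, $\psi_\infty=-Q_1\,y/h_1$, hence $u\equiv-Q_1/h_1$ and $v\equiv0$, which is the velocity part of \eqref{a17}. Evaluating $|\nabla\psi|^2=\Lambda-2gy$ at $y=h_1$ gives $Q_1^2/h_1^2=\Lambda-2gh_1$; the identical argument on the right downstream gives $(Q-Q_1)^2/h_2^2=\Lambda-2gh_2$, and eliminating $\Lambda$ yields $Q_1^2/h_1^2+2gh_1=(Q-Q_1)^2/h_2^2+2gh_2$. Since the left-hand side increases strictly and the right-hand side decreases strictly in $Q_1\in(0,Q)$, this equation has a unique root, which is the asserted determination of $Q_1$. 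The pressure is recovered from Bernoulli as $p=B-\frac12|\nabla\psi|^2-gy$; using $p=p_{atm}$ at $y=h_1$ to fix $B=p_{atm}+gh_1+\frac{Q_1^2}{2h_1^2}$ gives $p\to p_1(y)=p_{atm}+g(h_1-y)$ and $\nabla p\to(0,-g)$, and the right downstream is identical with $h_1,Q_1$ replaced by $h_2,Q-Q_1$.

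The upstream is simpler because there is no free boundary: under the hypothesis that $N_i$ is a graph $y=g_i^{-1}(x)$ for $x<-R_0$, the nozzle flattens to the strip $H_1<y<H_2$, and translating $\psi$ by $x_n\to-\infty$ with the fixed no-flow conditions on the two walls produces, by the same compactness and the same Liouville step, an affine limit with $u\equiv Q/(H_2-H_1)$, $v\equiv0$. Reading the pressure off the \emph{same} global constant $B$ computed above gives $p_0(y)=p_{atm}+g(h_1-y)+\frac{Q_1^2}{2h_1^2}-\frac{Q^2}{2(H_1-H_2)^2}$, since $(H_1-H_2)^2=(H_2-H_1)^2$. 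Finally, to promote subsequential convergence to genuine convergence as $x\to\pm\infty$ I would appeal to the uniqueness of the one-dimensional profile in each regime, so the limit is independent of the chosen sequence. I expect the main obstacle to be precisely the downstream one-dimensionalization together with the uniform control of the free boundary geometry, namely proving that $\Gamma_1$ and $\Gamma_2$ flatten \emph{monotonically and without oscillation} to the lines $y=h_i$ and that the free boundary slope tends to $0$, uniformly in the translation parameter; it is exactly this free-surface control under gravity, inaccessible to the weightless hodograph method, that is the delicate point.
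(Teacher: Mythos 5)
Your proposal is correct, and its skeleton --- translating the stream function toward the far field, extracting a limit by compactness, identifying a one-dimensional profile on the limiting strip, then reading the pressure off Bernoulli's law and using uniqueness of the limit profile to upgrade subsequential to full convergence --- is exactly the skeleton of Section 4 of the paper (downstream via the blow-down of Step 3 in the proof of Proposition \ref{lc2}, upstream via the strip problem \eqref{d9}). The genuine difference is the rigidity step that identifies the limit. The paper uses the $C^{2,\alpha}$ convergence up to the free boundary supplied by the Alt--Caffarelli flatness theory, so its limit inherits \emph{Cauchy data} on the flattened line ($\psi_0=Q$ together with $\partial_y\psi_0=\sqrt{2\lambda-2gh_2}$), and Cauchy--Kovalevskaya uniqueness forces the linear profile; moreover the flux relations $Q_1=\sqrt{2\lambda-2gh_1}\,h_1$ and $Q-Q_1=\sqrt{2\lambda-2gh_2}\,h_2$ are built into the construction through \eqref{b3} and Proposition \ref{lb0}, so the equation determining $Q_1$ is immediate there. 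You instead identify the limit by a Dirichlet-only Liouville theorem (odd reflection of the difference with the linear interpolant, plus the uniform gradient bound), which demands less of the convergence --- only locally uniform convergence in the open strip, flattening of the free boundaries, and continuity of the limit up to the two lines --- and you then recover the flux relations, hence the equation for $Q_1$, \emph{a posteriori} by passing the condition $|\nabla\psi|^2=\Lambda-2gy$ to the limit. That last evaluation is the one place where you still need $C^1$ convergence up to the free boundary, i.e.\ the same flatness machinery the paper quotes; if you wanted to avoid it entirely you could import the relations directly from Proposition \ref{lb0}. Two harmless slips: the left downstream limit is $\psi_\infty=Q_1(1-y/h_1)$ rather than $-Q_1y/h_1$ (the velocity $u\equiv-Q_1/h_1$ is unaffected), and the free boundary need not approach $y=h_1$ \emph{monotonically} --- what you actually need, and what Proposition \ref{lc2} provides, is that $k_1$ is finite and continuous on compact subsets of $(h_1,H]$ and diverges only as $y\to h_1^+$, which already forces the translated free boundaries to flatten.
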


%{\color{red}In the significant work \cite{ACF10} by Alt, Caffarelli
%and Friedman, they obtained the higher regularity of the free
%boundary at the separation point. Consequently, applying the results
%in Section 7 in \cite{ACF10}, one has
%\begin{theorem}\label{the4} $N_i\cup\Gamma_{i}$ is $C^2$ at $A_i$ or $N_i\cup\Gamma_{i}$ is only in $C^{1,\f12}$ at $A_i$
%and the curvature of $\Gamma_{i}$ goes to $\pm\infty$ as
%$x\rightarrow0^+$, $i=1,2$.

%\end{theorem}

%\begin{remark}
%The results of Theorem \ref{the4} imply that $$\text{the curvature
%along $\Gamma_{i}$ tends to the curvature of $N_i$ at $A_i$,}$$ or
%$$\text{the curvature of $\Gamma_{i}$ tends to $\pm\infty$ in absolute value as one approaches $A_i$ along $\Gamma_{i}$}$$ for $i=1,2$.
%The second case is so-called {\it abrupt separation}.
%\end{remark}}

We would like to give some comments on the results as follows.

\begin{remark} In the significant work \cite{ACF2},
the authors showed that in general there does not exist an
asymmetric jet in gravity field issuing from a nozzle with infinite
height, which satisfies the continuous fit conditions. However, the
semi-infinite channel considered here possesses finite height, which
seems physically reasonable and yet does not fall into the class
studied in \cite{ACF2}. We establish the existence of the
incompressible asymmetric impinging jet with continuous fit
conditions under gravity for this class nozzles. This is the one of
main differences to the work \cite{ACF2}. Meanwhile, ideas developed
in this paper may provide a different way to establish the
well-posedness of asymmetric jet flows under gravity without the
horizontal plate $N$, by taking the limit $H\rightarrow+\infty$.
This will be considered in the future.
\end{remark}

\begin{remark} The mechanical behavior of the jet near the orifice
is  quite complicated because the flow has to adjust from a
distribution compatible with the flow in a nozzle to the flow in a
jet. Here we use the continuous and smooth fit conditions near the
orifice, which is physically acceptable in the sense of Brillouin in
\cite{BM} that the detachment is smooth at the fixed detached point.
Yet, to ensure that such conditions are fulfilled mathematically is
one of key elements to establish the existence of the impinging jet
flow in this paper. The main new ideas are as follows. First, due to
the Bernoulli's law, the constant pressure condition on the free
streamlines implies that
$$\f12(u^2+v^2)+gy=constant \ \ \text{on}\ \ \Gamma_1\cup\Gamma_2,$$
and this constant, denoted as $\ld$ is undetermined at the present
stage. On the other hand, as pointed out before, the effluent flux
$Q_1$ is also not imposed here. Thus, to solve the impinging jet
problem, we can treat the two quantities $(\ld,Q_1)$ as a pair of
free parameters which will be adjusted to guarantee the continuous
and smooth fit conditions. This is also the main difference from the
problem of free jets without solid nozzle walls. It should be also
mentioned that there are literatures on numerical and analytical
results on the free jets of ideal fluids, see \cite{CS,HW,TU} and
the references therein.
\end{remark}

\begin{remark} Recall that in the absence of gravity, the existence of an impinging jet with two asymptotic directions and non-existence of an impinging jet with only one asymptotic direction have been shown in
\cite{CDW}.
However, in this paper, our proof still excludes the critical cases
$Q_1=0$ and $Q_1=Q$ and shows that $Q_1\in(0,Q)$ is the sufficient
condition to the existence of impinging jet under gravity. This
coincides with the results on impinging jets without gravity.
\end{remark}

\begin{remark} One of the main differences between the impinging jets in \cite{ACF1,ACF2,ACF3} and
general jets is the occurrence of the interface between the two
fluids with different downstream. Here, we have to show the
existence of the interface, and the uniqueness of the intersection
point of the interface and the ground. In fact, we will show that
the intersection point is a unique stagnation point in the fluid
domain, furthermore, the interface $\Gamma$ intersects the ground
perpendicularly at the stagnation point $S$ in Proposition
\ref{lg1}.
\end{remark}

\begin{remark} It should be noted that the assumption $Q>2\sqrt{gH^3}$ in Theorem
\ref{the1} is a sufficient condition to exclude the stagnation point
in the fluid domain (see Remark \ref{rb1}), especially on the free
boundary. The advantage of this fact lies in exclusion the possible
singularity on the free boundary of water waves with gravity. The
singularity of the free surface flows with gravity at the stagnation
point has been studied extensively in the elegant works
\cite{AFT,P,W,V,VW1,VW2}, which is closely related to a very
interesting problem, the so-called {\it Stokes Conjecture} (Stokes
\cite{S} conjectured in 1880 that, at any stagnation point the free
surface has a symmetric corner of $\f{2\pi}{3}$). Therefore, unlike
the results in \cite{ACF2}, we do not restrict the deflection angle
of the nozzle wall at the endpoints in this paper.
\end{remark}

The remainder of this paper is organized as follows. Section 2
describes the set-up of the physical problem and formulates a free
boundary problem for the stream function with two parameters
$(\ld,Q_1)$. The solvability of the free boundary problem for any
parameters follows from the standard variational approach in Section
2. To verify the continuous fit and smooth fit conditions, we will
show that there exists a pair of parameters $(\ld,Q_1)$, such that
the desired conditions hold in Section 3.  We give the asymptotic
behavior of the impinging jet in Section 4. Finally, we will
investigate the existence, regularity and the properties of the
interface $\Gamma$ and the branching point $S$ in Section 5.

\section{Mathematical formulation of the impinging flow problem}

In this section, we will formulate a boundary value problem with
free boundaries, and furthermore, solve the free boundary problem by
the variational method developed by Alt, Caffarelli and Friedman in
\cite{AC1,ACF1,ACF2}. For convenience to the readers, we will sketch
the details and point out the differences as follows.

\subsection{A free boundary problem} Due to the continuity equation,
there exists a stream function $\psi(x,y)$, such that
 \be\label{b1}\f{\p\psi}{\p y}=u\ \ \text{and}\ \
\f{\p\psi}{\p x}=-v.\ee The slip boundary conditions on the nozzle
walls and the free boundaries imply that $N_1\cup\Gamma_1$ and
$N_2\cup\Gamma_2$ are level sets of the stream function, and then
without loss of generality, one assumes that
$$\psi=0\ \ \text{on}\ \  N_1\cup\Gamma_1,\  \psi=Q_1\ \ \text{on}\ \ N \ \ \text{and}\ \psi=Q\ \ \ \text{on}\ \ \ N_2\cup \Gamma_2.$$
As mentioned before, in this work, we will seek an impinging jet
under gravity with negative vertical velocity, and then it is
reasonable to consider the free boundaries below the horizontal line
$y=H$. Hence, we may assume that the possible fluid region is
contained in the domain $\O$, bounded by $N_1,N_2,L_1,L_2$ and $N$,
where
$$L_1=\{(x,H)\mid x\leq-1\}\ \ \text{and}\ \ L_2=\{(x,H)\mid x\geq1\}.$$

%Denote $\O$ as the possible fluid field, bounded by $N,N_1,N_2$,
%$\CHI_1=\{(x,0)\mid x<-1\}$ and $\CHI_2=\{(x,0)\mid x>1\}$.
%$D=\O\cap\{y<H\}$.

Moreover, we can define the free streamlines $\Gamma_1$ and
$\Gamma_2$ as\be\label{b4}\Gamma_1=\{0<y<H\}\cap\p\{\psi>0\}\ \
\text{and}\ \ \Gamma_2=\{0<y<H\}\cap\p\{\psi<Q\},\ee and the
interface $\Gamma$ as
$$\Gamma=\O\cap\{\psi=Q_1\}.$$ Thus, the fluid region
$\O_0=\O\cap\{0<\psi<Q\}$.

The irrotational condition implies
$$\Delta\psi=0\ \ \text{in}\ \ \O\cap\{0<\psi<Q\},$$ and the
Bernoulli's law gives that
\be\label{b2}\f{|\g\psi|^2}2+gy+p=\text{constant}\ \ \text{in}\ \
\O\cap\{0<\psi<Q\}.\ee This, together with the constant pressure
condition on the free streamlines, implies that $\f{|\g\psi|^2}2+gy$
remains a constant on $\Gamma_1\cup\Gamma_2$, we denote this
constant as $\ld$. Hence, we formulate the following free boundary
value problem in terms of a stream function.
\be\label{b6}\left\{\ba{ll} &\Delta\psi=0  \ \text{in}~~\O\cap\{0<\psi<Q\},\\
&\f{\p\psi}{\p\nu}=-\sqrt{2\lambda-2gy}\ \ \text{on}~~\Gamma_1, \ \f{\p\psi}{\p\nu}=\sqrt{2\lambda-2gy}\ \ \text{on}~~\Gamma_2,\\
&\psi=0\ \text{on}\  N_1 \cup\Gamma_1,\ \psi=Q \ \text{on} \ N_2\cup
\Gamma_2,
 \ \ \psi=Q_1\
\text{on}\ N, \ea\right. \ee where $\nu$ is the outer normal to the
free boundaries.

The problem is now to find a stream function $\psi(x,y)$ solving the
free boundary problem \eqref{b6} with the continuous fit and smooth
fit conditions. It should be emphasized that there are two
undetermined parameters $\ld$ and $Q_1$ in the free boundary
problem, which will be determined by continuous fit conditions. In
other words, we will solve the free boundary problem for any $\ld$
and $Q_1$ first, and then show the existence of a pair of parameters
$(\ld,Q_1)$ to guarantee the continuous fit conditions.

Once the free boundary problem \eqref{b6} has been solved, the
velocity field is given by \eqref{b1} and the pressure is given by
Bernoulli's law, and then the free boundaries $\Gamma_1$ and
$\Gamma_2$ are determined by \eqref{b4}.

\subsection{On the asymptotic widths}
In this subsection, we will find the relationship between the
parameters $(\ld,Q_1)$ and the asymptotic widths $h_1$ and $h_2$ of
the impinging jet in downstream if it exists.

\begin{prop}\label{lb0}  For any given $Q>2\sqrt{gH^3}$,

(1)  the parameters $\ld>0$ and $Q_1\in[0,Q]$ can be determined
uniquely by the asymptotic heights $h_1\in[0,H]$ and $h_2\in[0,H]$.

(2) for any given $Q_1\in(0,Q)$ and
$\ld\geq\f{\max\{Q_1^2,(Q-Q_1)^2\}}{2H^2}+gH$, there exists a unique
pair of asymptotic heights ($h_1,h_2$)  of the impinging jet flow
with $h_1\in(0,H]$ and $h_2\in(0,H]$.
 Furthermore, $h_1$ increases
with respect to $Q_1$ while $h_2$ decreases with respect to $Q_1$.
\end{prop}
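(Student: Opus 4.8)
The plan is to reduce everything to the two algebraic identities governing the downstream horizontal flow, and then treat them as elementary one–variable problems. Recall that on each free streamline Bernoulli's law reads $\f{|\g\psi|^2}{2}+gy=\ld$, and that as $x\to\pm\infty$ the flow becomes horizontal with horizontal speed $Q_1/h_1$ on the left jet of width $h_1$ (resp. $(Q-Q_1)/h_2$ on the right jet of width $h_2$). Evaluating Bernoulli's law on the free streamlines at the asymptotic heights $y=h_1$ and $y=h_2$ yields the master relations
\be\label{pp1}
\f{Q_1^2}{2h_1^2}+gh_1=\ld,\qquad \f{(Q-Q_1)^2}{2h_2^2}+gh_2=\ld,
\ee
subject to $Q_1+(Q-Q_1)=Q$. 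I would take \eqref{pp1} as the starting point; both assertions are statements about this pair of identities, read in opposite directions.

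For part (1), given $(h_1,h_2)$ I would eliminate $\ld$ from \eqref{pp1} and set
$$f(Q_1)=\f{Q_1^2}{2h_1^2}-\f{(Q-Q_1)^2}{2h_2^2}+g(h_1-h_2),\qquad Q_1\in[0,Q].$$
Since $f'(Q_1)=\f{Q_1}{h_1^2}+\f{Q-Q_1}{h_2^2}>0$, $f$ is strictly increasing and has at most one zero. Existence and interiority is exactly where the hypothesis $Q>2\sqrt{gH^3}$ enters: because $h_1,h_2\le H$ one has $\f{Q^2}{2h_2^2}\ge\f{Q^2}{2H^2}>2gH\ge g(h_1-h_2)$, so $f(0)<0$, while $f(Q)\ge\f{Q^2}{2H^2}-gH>0$. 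Hence there is a unique $Q_1\in(0,Q)$ with $f(Q_1)=0$, and $\ld>0$ is then recovered from either identity in \eqref{pp1}. The degenerate boundary values $h_1=0$ or $h_2=0$ force $Q_1=0$ or $Q_1=Q$ respectively and are handled by continuity.

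For part (2) the two relations in \eqref{pp1} decouple: with $Q_1$ and $\ld$ fixed, $h_1$ and $h_2$ solve $\phi_q(h):=\f{q^2}{2h^2}+gh=\ld$ with $q=Q_1$ and $q=Q-Q_1$ respectively. Each $\phi_q$ is strictly convex on $(0,\infty)$, strictly decreasing on $(0,h_*)$ and strictly increasing on $(h_*,\infty)$ with $h_*=(q^2/g)^{1/3}$, so $\phi_q(h)=\ld$ generically has two roots. The monotonicity assertions select the branch: implicit differentiation of the first identity gives $\f{dh_1}{dQ_1}=\f{Q_1h_1}{Q_1^2-gh_1^3}$, which is positive exactly on the supercritical branch $h_1<h_*$, matching ``$h_1$ increases in $Q_1$''; the symmetric computation, noting $q=Q-Q_1$ decreases in $Q_1$, gives ``$h_2$ decreases in $Q_1$''. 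I would therefore define $h_1,h_2$ as the roots on the supercritical branch $(0,h_*)$, on which $\phi_q$ is a strictly decreasing bijection onto $(\phi_q(h_*),\infty)$, so each root is unique. Finally, since $\phi_q(h_*)$ is the global minimum one has $\phi_q(H)\ge\phi_q(h_*)$, so the threshold $\ld\ge\f{\max\{Q_1^2,(Q-Q_1)^2\}}{2H^2}+gH\ge\phi_q(H)$ forces $\phi_q(h)=\ld\ge\phi_q(H)$, i.e. $h\le H$, which confines both heights to $(0,H]$.

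The underlying calculus (convexity of $\phi_q$, the two sign computations, the estimates for $f(0)$ and $f(Q)$) is routine; the step requiring genuine care is the branch selection in part (2). One must argue that the physically relevant asymptotic width is the supercritical root $h_i<h_{*,i}$, since this is simultaneously what makes the assignment $(\ld,Q_1)\mapsto(h_1,h_2)$ single–valued and what produces the monotonicity with the stated signs, and then verify that the threshold on $\ld$ is the sharp condition keeping this root at most $H$. In particular I would double–check the borderline case $\ld=\phi_q(H)$ with $h_*<H$, where the non-selected subcritical branch also meets $(0,H]$ at $h=H$, to confirm that restricting to the supercritical branch indeed restores uniqueness.
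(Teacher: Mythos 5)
Your part (1) is correct and takes a genuinely cleaner route than the paper. The paper also eliminates $\lambda$, but then solves the resulting quadratic in $Q_1$ explicitly, discards one root by a sign inspection, and uses $Q>2\sqrt{gH^3}$ to check that the retained root lies in $[0,Q]$; it needs a separate case when $h_1=h_2$, since the quadratic's leading coefficient $h_2^2-h_1^2$ then vanishes. Your strictly increasing $f$ together with the endpoint signs $f(0)<0<f(Q)$ gives existence and uniqueness in one stroke, with no case splitting; the degenerate values $h_1=0$ or $h_2=0$ are handled directly by the original form $Q_1=\sqrt{2\lambda-2gh_1}\,h_1$ of \eqref{b3} (no continuity argument is actually needed).

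Part (2), however, has a genuine gap, and you put your finger on it yourself without closing it. The proposition (see also Remark \ref{rb1}, and the use of $(h_1,h_2)$ in the boundary data $\Psi_{\lambda,Q_1,\mu}$ of the truncated variational problem) asserts that the system $\phi_{Q_1}(h_1)=\phi_{Q-Q_1}(h_2)=\lambda$ has exactly one solution with $h_1,h_2\in(0,H]$ --- uniqueness among all roots in $(0,H]$, not uniqueness on a branch you are free to select. Declaring $h_i$ to be the supercritical root makes the map $(\lambda,Q_1)\mapsto(h_1,h_2)$ single-valued by fiat, but it leaves open exactly the scenario in your last sentence: a second root in $(0,H]$ on the increasing branch, which would falsify the stated uniqueness. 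Closing this requires the hypothesis $Q>2\sqrt{gH^3}$, which your part (2) never invokes. Indeed, since $Q_1+(Q-Q_1)=Q>2\sqrt{gH^3}$, the larger flux $q'=\max\{Q_1,Q-Q_1\}$ satisfies $q'>\sqrt{gH^3}$, hence $h_*(q')>H$ and $\phi_{q'}$ is strictly decreasing on all of $(0,H]$, so no second root can occur for it; and if the smaller flux $q$ has $q<\sqrt{gH^3}$ (the only way an increasing branch can intrude into $(0,H]$), then $\lambda\geq\phi_{q'}(H)=\frac{q'^2}{2H^2}+gH>\frac{q^2}{2H^2}+gH=\phi_q(H)$ strictly, so on $(h_*(q),H]$ one has $\phi_q\leq\phi_q(H)<\lambda$ and the increasing branch carries no root at all. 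This is precisely the paper's case analysis ($t_0\geq H$ versus $t_0<H$ for its $\Lambda_2$). The hypothesis is not removable: if one allowed $q'<\sqrt{gH^3}$ and took $\lambda=\phi_{q'}(H)$ (which meets the threshold with equality), then $h=H$ and the supercritical root would be two distinct solutions in $(0,H]$. Note also that the strict inequality $\lambda>\phi_q(h_*)$ obtained along the way is what guarantees the roots lie strictly inside the decreasing branch, which your implicit-differentiation formulas for $dh_1/dQ_1$ and $dh_2/dQ_1$ tacitly require.
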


\begin{proof}  (1). It is clear that \be\label{b3}\text{$Q_1=\sqrt{2\ld-2gh_1}h_1$
and $Q-Q_1=\sqrt{2\ld-2gh_2}h_2$}.\ee

 {\bf Case 1}. $h_1=h_2>0$, one has
$$Q_1=\f{Q}2\ \ \text{and}\ \ \ld=\f{Q^2}{8h_1^2}+gh_1=\f{Q^2}{8h_2^2}+gh_2.$$

{\bf Case 2}. $h_1h_2=0$, one has
$$Q_1=0\ \ \text{and}\ \ \ld=\f{Q^2}{2h_2^2}+gh_2\ \ \text{for}\ \
h_1=0,$$ and $$Q_1=Q\ \ \text{and}\ \ \ld=\f{Q^2}{2h_1^2}+gh_1\ \
\text{for}\ \ h_2=0.$$

{\bf Case 3}. $h_1\neq h_2$ and $h_1h_2>0$, it follows from
\eqref{b3} that
$$Q_1^2=2\ld h_1^2-2gh_1^3\ \ \text{and}\ \ (Q-Q_1)^2=2\ld
h_2^2-2gh_2^3,$$ which implies that
$$(h_2^2-h_1^2)Q_1^2+2h_1^2QQ_1-(h_1^2Q^2+2gh_1^2h_2^2(h_2-h_1))=0.$$
Then we have
\be\label{bb3}Q_1=\f{-Qh_1^2+\sqrt{Q^2+2g(h_2^2-h_1^2)(h_2-h_1)}h_1h_2}{h_2^2-h_1^2},\ee
or$$Q_1=\f{-Qh_1^2-\sqrt{Q^2+2g(h_2^2-h_1^2)(h_2-h_1)}h_1h_2}{h_2^2-h_1^2}\left\{\ba{ll}
<0,\ \ \ \ \ \ &\text{if}\ \
h_1<h_2,\\
>Q,\ \ \ \ \ \ &\text{if}\ \
h_1>h_2,\ea\right.$$
%$$Q_1=\f{-Qh_1^2-\sqrt{Q^2+2g(h_2^2-h_1^2)(h_2-h_1)}h_1h_2}{h_2^2-h_1^2}<0\ \ \text{{\color{blue}(which can be
%dropped)}}.$$
which can be dropped. It should be noted that
$$\sqrt{Q^2+2g(h_2^2-h_1^2)(h_2-h_1)}>Q\ \ \text{for any $h_1\neq h_2$}.$$ It suffices to show that
$Q_1$ described in the formula \eqref{bb3} lies in $[0,Q]$. In fact,
a direct calculation gives that$$\ba{rl}Q_1
=\f{h_1(Q^2+2g(h_2-h_1)h_2^2)}{Qh_1+\sqrt{Q^2+2g(h_2^2-h_1^2)(h_2-h_1)}h_2}.\ea$$
and$$\ba{rl}Q_1-Q=&\f{-Qh_2^2+\sqrt{Q^2+2g(h_2^2-h_1^2)(h_2-h_1)}h_1h_2}{h_2^2-h_1^2}\\
%=&\f{h_2(-Q^2h_2^2+(Q^2+2g(h_2^2-h_1^2)(h_2-h_1))h_1^2)}{(h_2^2-h_1^2)(Qh_2+\sqrt{Q^2+2g(h_2^2-h_1^2)(h_2-h_1)}h_1)}\\
=&\f{h_2(-Q^2+2g(h_2-h_1)h_1^2)}{Qh_2+\sqrt{Q^2+2g(h_2^2-h_1^2)(h_2-h_1)}h_1}.\ea$$
Noting that $Q_1\geq 0$ and $Q_1-Q\leq 0$, due to the condition
$Q>2\sqrt{gH^3}$.

Hence, $Q_1\in[0,Q]$ can be uniquely determined by $h_1$ and $h_2$.
Furthermore, $\ld=\f{Q_1^2}{2h_1^2}+gh_1$ can be uniquely determined
by $h_1$ and $Q_1$.

(2).  Since $Q>2\sqrt{gH^3}$, we have $Q_1>\sqrt{gH^3}$ or
$Q-Q_1>\sqrt{gH^3}$. Without loss of generality, we assume that
$Q_1>\sqrt{gH^3}$.

Recall that $\ld=\f{Q_1^2}{2h_1^2}+gh_1=\f{(Q-Q_1)^2}{2h_2^2}+gh_2$.
Set
$$\Lambda_1(t)=\f{Q_1^2}{2t^2}+gt\ \ \text{and}\ \
\Lambda_2(t)=\f{(Q-Q_1)^2}{2t^2}+gt$$ for any fixed $Q_1\in[0,Q]$
and $t>0$. It is easy to check that
\be\label{b002}\Lambda'_1(t)=\f{gt^3-Q_1^2}{t^3}<0\ \ \text{for any
$t\in(0,H]$}.\ee Since
$\ld\geq\f{\max\{Q_1^2,(Q-Q_1)^2\}}{2H^2}+gH\geq\Lambda_1(H)$, it
follows from \eqref{b002} that $h_1\in(0,H]$ is determined uniquely
by $\ld$ and $Q_1$.

Similarly, we have $$\text{ $\Lambda_2'(t)=-\f{(Q-Q_1)^2}{t^3}+g$
and $\Lambda_2''(t)=\f{3(Q-Q_1)^2}{t^4}>0$ for any $t>0$.}$$ Set
$t_0=\f{(Q-Q_1)^{\f23}}{g^{\f13}}$, such that $\Lambda_2'(t_0)=0$.
Then we consider the following two cases.

{\bf Case 1.} $t_0\geq H$ (see Figure \ref{f17}).
\begin{figure}[!h]
\includegraphics[width=80mm]{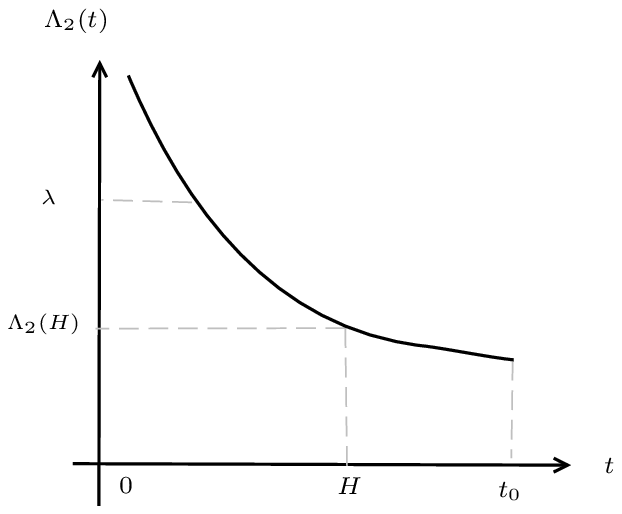}
\caption{The relation $\Lambda_2(t)$ and $t$}\label{f17}
\end{figure}

Then $\Lambda_2'(t)<0$ for any $t\in(0,H]$. On another side, the
fact $\ld\geq\f{\max\{Q_1^2,(Q-Q_1)^2\}}{2H^2}+gH\geq\Lambda_2(H)$
implies that $h_2\in(0,H]$ can be determined uniquely by
$\ld=\f{(Q-Q_1)^2}{2h_2^2}+gh_2$.

{\bf Case 2.} $t_0<H$ (see Figure \ref{f4}), which is equivalent to
$Q-Q_1<\sqrt{gH^3}$. This together with $Q_1>\sqrt{gH^3}$ gives that
$$\Lambda_2(H)=\f{(Q-Q_1)^2}{2H^2}+gH<\f{Q_1^2}{2H^2}+gH\leq\ld,$$ which implies that $h_2\in(0,H]$ is determined uniquely
by $\ld$ and $Q_1$, namely, $\ld=\f{(Q-Q_1)^2}{2h_2^2}+gh_2$,
\begin{figure}[!h]
\includegraphics[width=80mm]{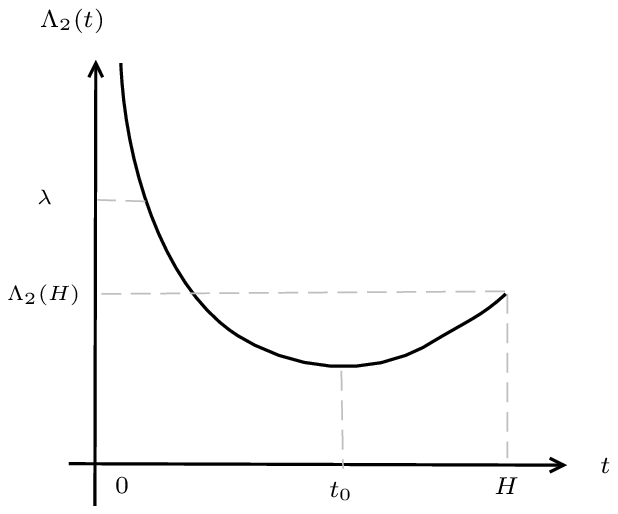}
\caption{The relation $\Lambda_2(t)$ and $t$}\label{f4}
\end{figure}
Furthermore,  we can conclude that $h_1$ is increasing with respect
to $Q_1$ and $h_2$ is decreasing with respect to $Q_1$.

\end{proof}

\begin{remark}\label{rb1}
Note that $\ld=\f{Q_1^2}{2h_1^2}+gh_1=\f{(Q-Q_1)^2}{2h_2^2}+gh_2$.
It follows from the proof of Proposition \ref{lb0} that the
condition $Q>2\sqrt{gH^3}$ ensures that one can choose $\ld$
depending on $h_1$ and $h_2$ monotonically provided that
$$\ld\geq\f{\max\{Q_1^2,(Q-Q_1)^2\}}{2H^2}+gH.$$ Clearly, the right
hand side has the following lower bound
$$\f{Q^2}{8H^2}+gH,$$ which excludes the stagnation point on the free boundary $\Gamma_1\cup\Gamma_2$.

In particularly, for any
$(Q_1,\ld)\in[0,Q]\times\left[\f{\max\{Q_1^2,(Q-Q_1)^2\}}{2H^2}+gH,+\infty\right)$,
there exists a unique pair $(h_1,h_2)$, such that
$$\ld=\f{Q_1^2}{2h_1^2}+gh_1=\f{(Q-Q_1)^2}{2h_2^2}+gh_2.$$
%Noting that $\ld=\f{Q_1^2}{2h_1^2}+gh_1=\f{(Q-Q_1)^2}{2h_2^2}+gh_2$,
%the condition $Q>2\sqrt{gH^3}$ guarantees the monotonicity of $\ld$
%with respect to $h_1$ and $h_2$, which also gives a lower bound to
%the parameter $\ld$ as
%$$\ld\geq\f{\max\{Q_1^2,(Q-Q_1)^2\}}{2H^2}+gH\geq\f{Q^2}{8H^2}+gH.$$
%On the other hand, we have
%\be\label{b002}\f{d}{dt}\left(\sqrt{2\ld-2gt}t\right)=\f{2\ld-2gt}{\sqrt{2\ld-3gt}}\geq\f{3gH-2gt}{\sqrt{2\ld-3gt}}>0\
%\ \text{for any $t\in(0,H)$}.\ee
\end{remark}

\subsection{The variational approach}
To solve the boundary value problem \eqref{b6}, we will use the
variational method, which was developed by Alt, Caffarelli and
Friedman in \cite{AC1,ACF1,ACF2} for two-dimensional asymmetric jets
and jets under gravity.

Define a functional with a parameter $\ld$ as follows,
 \be\label{b7}J_{\ld}(\psi)=\int_{\Omega}|\nabla
\psi|^2+(2\ld-2gy)\chi_{\{0<\psi<Q\}\cap D} dxdy,\ee where $\chi_E$
is the characteristic function of a set $E$ and $D=\{0<y<H\}$.
Define an admissible set with a parameter $Q_1$,
$$\ba{rl} K_{Q_1}=\{\psi\in H^1_{loc}(\mathbb{R}^2)\mid &\psi=Q_1\ \text{below}\ N, \psi=Q\ \text{in the right of}\ N_2\cup L_2,\\
 &\psi=0\ \text{in the left of}\ N_1\cup L_1\}.\ea $$

However, noting that the functional $J_{\ld}(\psi)=+\infty$ for any
$\psi\in K_{Q_1}$, we will consider the variational problem in a
truncated domain. For any $\mu>1$, denote
\be\label{b70}\begin{aligned}&\text{$H_{1,\mu}=\max\{y\mid g_1(y)=-\mu\}$}\ \ &\text{$H_{2,\mu}=\min\{y>H_{1,\mu}\mid g_2(y)=-\mu\}$},\\
&\text{$N_{1,\mu}=\{(x,y)\mid x=g_1(y), H\leq y\leq H_{1,\mu}\}$},\ \ &\text{$L_{1,\mu}=L_1\cap\{x\geq-\mu\}$},\\
&\text{$N_{2,\mu}=\{(x,y)\mid x=g_2(y), H\leq y\leq H_{2,\mu}\}$},\
\
&\text{$L_{2,\mu}=L_2\cap\{x\leq\mu\}$},\\
&\text{$\sigma_{1,\mu}=\{x=-\mu,0\leq y\leq H\}$},\ \ &\text{
$\sigma_{2,\mu}=\{x=\mu,0\leq y\leq H\}$},\\
 &\text{$N_{\mu}=N\cap\{-\mu \leq
x\leq\mu\}$},\ \ &\text{$\sigma_{\mu}=\{x=-\mu, H_{1,\mu}\leq y\leq
H_{2,\mu}\}$},\end{aligned}\ee  and $\O_{\mu}$ is bounded by
$N_{1,\mu}, N_{2,\mu}, L_{1,\mu},L_{2,\mu},\sigma_\mu,
\sigma_{1,\mu}, \sigma_{2,\mu}$ and $N_\mu$ (see Figure \ref{f5}).
\begin{figure}[!h]
\includegraphics[width=100mm]{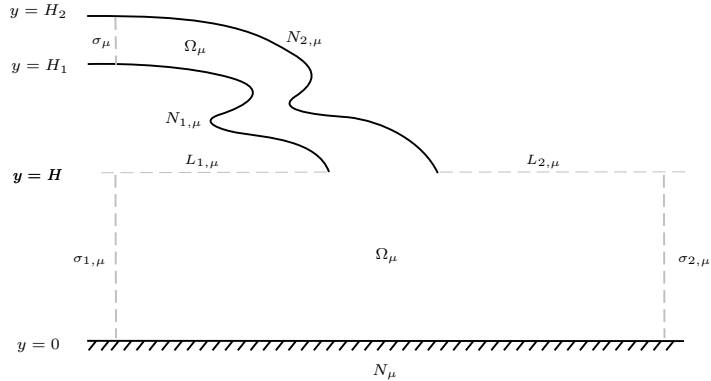}
\caption{The truncated domain $\O_\mu$}\label{f5}
\end{figure}

Hence, one may consider the following truncated variational problem
in the bounded domain $\O_{\mu}$. Define a functional
$$J_{\ld,\mu}(\psi)=\int_{\Omega_\mu}|\nabla
\psi|^2+(2\ld-2gy)\chi_{\{0<\psi<Q\}\cap D_\mu} dxdy,$$ where
$D_\mu=\{-\mu<x<\mu,0<y<H\}$.

 {\bf The truncated variational problem $(P_{\ld,Q_1,\mu})$:} For any
 $\mu>1$, $Q_1\in[0,Q]$ and $\ld\geq\f{\max\{Q_1^2,(Q-Q_1)^2\}}{2H^2}+gH$,
find a $\psi_{\ld,Q_1,\mu}\in K_{\ld,Q_1,\mu}$, such that
$$\ \ \   J_{\ld,\mu}(\psi_{\ld,Q_1,\mu})=\min_{\psi\in K_{\ld,Q_1,\mu}} J_{\ld,\mu}(\psi),$$
where the admissible set $K_{\ld,Q_1,\mu}=\{\psi\in K_{Q_1}\mid
\psi=\Psi_{\ld,Q_1,\mu}(x,y)\ \text{on}\ \p\O_\mu\}$ and the
boundary function
$$\Psi_{\ld,Q_1,\mu}(x, y)=\left\{\ba{ll} 0,\ \ \ \ \ \ &\text{on}\ \
N_{1,\mu}\cup L_{1,\mu},\\
Q,\ \ \ \ \ \ &\text{on}\ \
N_{2,\mu}\cup L_{2,\mu},\\
Q_1,\ \ \ \ \ \ &\text{on}\ \
N_\mu,\\
\max\left\{-\sqrt{2\ld-2gh_1}y+Q_1,0\right\},\ \ \ \ \ \ &\text{on}\
\
\sigma_{1,\mu},\\
\min\left\{\sqrt{2\ld-2gh_2}y+Q_1,Q\right\},\ \ \ \ \ \ &\text{on}\
\
\sigma_{2,\mu},\\
\f{(y-H_{1,\mu})Q}{H_{2,\mu}-H_{1,\mu}}\ \ \ \ \ \ &\text{on}\ \
\sigma_{\mu}.\ea\right.$$ Here, $h_1$ and $h_2$ are the asymptotic
heights of the impinging jets in left and right downstream,
depending uniquely on $\ld$ and $Q_1$ as in \eqref{b3}.

The existence of a minimizer $\psi_{\ld,Q_1,\mu}$ follows directly
from Theorem 1.3 in \cite{AC1}, so the details are omitted here.

The corresponding free boundaries $\Gamma_{1,\ld,Q_1,\mu}$ and
$\Gamma_{2,\ld,Q_1,\mu}$ are defined as
$$\text{$\Gamma_{1,\ld,Q_1,\mu}=D_\mu\cap\p\{\psi_{\ld,Q_1,\mu}>0\}$ and
$\Gamma_{2,\ld,Q_1,\mu}=D_\mu\cap\p\{\psi_{\ld,Q_1,\mu}<Q\}$}. $$

It follows from Lemma 1.3, Lemma 2.3 and Lemma 2.4 in \cite{AC1}
that the following properties hold for the minimizer.
\begin{prop}\label{lb1} The
minimizer $\psi_{\ld,Q_1,\mu}$ satisfies that

(1) $\psi_{\ld,Q_1,\mu}\in C^{0,1}(\O_\mu)$ and is harmonic in
$\Omega_{\mu}\cap\{0<\psi_{\ld,Q_1,\mu}<Q\}$.

(2) $0\leq\psi_{\ld,Q_1,\mu}\leq Q$ in $\O_\mu$ and
$0<\psi_{\ld,Q_1,\mu}<Q$ in $\O_\mu\cap\{y>H\}$.

(3) The free boundaries $\Gamma_{1,\ld,Q_1,\mu}$ and
$\Gamma_{2,\ld,Q_1,\mu}$ are analytic.

(4) $|\g\psi_{\ld,Q_1,\mu}|=\sqrt{2\ld-2gy}$ on
$\Gamma_{1,\ld,Q_1,\mu}\cup\Gamma_{2,\ld,Q_1,\mu}$.

\end{prop}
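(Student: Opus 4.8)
The plan is to read $(P_{\ld,Q_1,\mu})$ as a pair of decoupled one-phase Bernoulli free boundary problems of Alt--Caffarelli type and to transplant the regularity theory of \cite{AC1} to the present variable-weight setting. Indeed, near $\Gamma_{1,\ld,Q_1,\mu}$ the relevant phase is $\{\psi>0\}$, while near $\Gamma_{2,\ld,Q_1,\mu}$ it is $\{\psi<Q\}$, and in the latter the substitution $w=Q-\psi$ produces a functional of exactly the same form; so it suffices to treat one one-phase problem. The single feature that makes the transplantation routine is that, by the standing hypothesis $\ld\geq\f{\max\{Q_1^2,(Q-Q_1)^2\}}{2H^2}+gH>gH$, the weight obeys $0<2\ld-2gH\leq 2\ld-2gy\leq 2\ld$ on $D_\mu$, hence it is smooth and uniformly elliptic there. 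I abbreviate $\psi=\psi_{\ld,Q_1,\mu}$ and establish the assertions in the order (2), (1), (4), (3).

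For (2) I would use the truncation $\psi^\ast=\max\{0,\min\{Q,\psi\}\}$. As the boundary datum $\Psi_{\ld,Q_1,\mu}$ takes values in $[0,Q]$, $\psi^\ast\in K_{\ld,Q_1,\mu}$ is admissible. One verifies $\{0<\psi^\ast<Q\}=\{0<\psi<Q\}$, so the weight term in $J_{\ld,\mu}$ is unchanged, whereas $|\g\psi^\ast|\leq|\g\psi|$ a.e.\ with strict inequality on $\{\psi<0\}\cup\{\psi>Q\}$; minimality then forces $\psi=\psi^\ast$, i.e.\ $0\leq\psi\leq Q$. Since $D_\mu\subset\{y<H\}$, the weight term is absent in $\O_\mu\cap\{y>H\}$, so there $\psi$ is harmonic; as the boundary data range over all of $[0,Q]$ (with $\psi=0$ on $N_{1,\mu}$ and $\psi=Q$ on $N_{2,\mu}$), the strong maximum principle yields the strict bounds $0<\psi<Q$ in $\O_\mu\cap\{y>H\}$.

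Assertion (1) splits into an easy and a hard part. Harmonicity of $\psi$ in $\O_\mu\cap\{0<\psi<Q\}$ is immediate: for $\phi\in C_c^\infty$ supported in this open set the characteristic function $\chi_{\{0<\psi<Q\}\cap D_\mu}$ is unchanged under $\psi\mapsto\psi+t\phi$ for small $|t|$, so the vanishing of the first variation gives $\int\g\psi\cdot\g\phi=0$. The interior Lipschitz bound $\psi\in C^{0,1}(\O_\mu)$ is the genuinely technical step, furnished by Lemma 1.3 of \cite{AC1}: at a free boundary point one proves the linear growth $\sup_{B_r}\psi\leq Cr$ by comparison with harmonic competitors, paired with the matching nondegeneracy (lower linear growth) obtained by testing minimality against a rescaled subsolution. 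Because the weight is pinched between two positive constants these comparisons go through with constants depending only on $\ld,g,H$; this is where I expect the main work, though no new idea beyond \cite{AC1} is needed.

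Finally, for (4) I would carry out a domain (inner) variation: for $\xi\in C_c^\infty(\O_\mu;\R^2)$ put $\psi_t=\psi\circ(I+t\xi)^{-1}$ and impose $\f{d}{dt}J_{\ld,\mu}(\psi_t)\big|_{t=0}=0$; after integrating by parts (the $y$-dependence of the weight contributing only a bulk term $\g(2\ld-2gy)\cdot\xi$ that is absorbed) one reads off $|\g\psi|^2=2\ld-2gy$ on the reduced free boundary in the measure-theoretic sense, which is the content of Lemmas 2.3--2.4 of \cite{AC1}. With the Lipschitz and nondegeneracy estimates in hand, the Alt--Caffarelli regularity theorem classifies blow-ups and shows the reduced free boundary is locally a $C^{1,\alpha}$ graph; in dimension two there are no singular points, so the entire $\Gamma_{1,\ld,Q_1,\mu}\cup\Gamma_{2,\ld,Q_1,\mu}$ is $C^{1,\alpha}$ and (4) holds classically there. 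Assertion (3) then follows by bootstrapping: the overdetermined system $\Delta\psi=0$, $\psi=0$ (resp.\ $Q$), $|\g\psi|=\sqrt{2\ld-2gy}$ is straightened by a partial hodograph/Legendre transform into an elliptic problem whose data are real-analytic (here crucially $y\mapsto 2\ld-2gy$ is affine), so the Kinderlehrer--Nirenberg--Spruck theory upgrades $C^{1,\alpha}$ to real-analytic.
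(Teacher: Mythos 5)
Your proposal is correct and takes essentially the same route as the paper: the paper establishes this proposition purely by invoking the Alt--Caffarelli machinery (Lemmas 1.3, 2.3 and 2.4 of \cite{AC1}, with Section 8 for analyticity), and your argument is exactly the transplantation of that one-phase theory (truncation for the bounds, first variation for harmonicity, the AC Lipschitz/nondegeneracy estimates, domain variation for the Bernoulli condition, and the hodograph-based upgrade to analyticity), justified by the same key observation that the weight $2\ld-2gy$ is pinched between positive constants on $D_\mu$. No discrepancy to report.
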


Next, we will give a uniform estimate to the gradient of the
minimizer away from the endpoints of the nozzle. The proof follows
from the similar arguments in Section 5 in \cite{ACF1} immediately,
and then the details are omitted here.

\begin{lemma}\label{lb60} The minimizer $\psi_{\ld,Q_1,\mu}$ is
continuous in $\bar\O_\mu$, and for any $\delta>0$, if $X\in \bar
\O_\mu$ with $|X-A_1|>\delta$ and $|X-A_2|>\delta$, then there
exists a constant $C$ depending on $\delta$ and $\O_\mu$, such that
$$|D\psi_{\ld,Q_1,\mu}(X)|\leq C(\sqrt{\ld}+Q).$$ Furthermore, for any
domain $K\Subset\O_\mu$ containing a free boundary point, the
Lipschitz coefficient of $\psi_{\ld,Q_1,\mu}$ in $K$ is estimated by
$C\sqrt{\ld}$, where the constant $C$ depends on $K$ and $\O_\mu$,
independent of $Q$.
\end{lemma}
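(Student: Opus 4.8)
The plan is to reduce the estimate to the one–phase Lipschitz bound of Alt and Caffarelli applied near each free boundary, together with ordinary boundary gradient estimates for harmonic functions near the fixed boundary. Write $\psi=\psi_{\ld,Q_1,\mu}$ for brevity. By Proposition \ref{lb1}, $\psi$ is already locally Lipschitz in $\O_\mu$, harmonic in $\O_\mu\cap\{0<\psi<Q\}$, satisfies $|\g\psi|=\sqrt{2\ld-2gy}$ on the free boundaries, and obeys $0<\psi<Q$ in $\O_\mu\cap\{y>H\}$. For the continuity up to $\p\O_\mu$, one checks first that the datum $\Psi_{\ld,Q_1,\mu}$ is continuous on $\p\O_\mu$: the piecewise definitions match at the corners $A_1,A_2$, since $Q_1=\sqrt{2\ld-2gh_1}\,h_1$ with $h_1<H$ forces the $\sigma_{1,\mu}$–datum to vanish at $y=H$ (and symmetrically at $A_2$). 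As $\p\O_\mu$ is Lipschitz and piecewise $C^{2,\alpha}$, continuity of $\psi$ in $\bar\O_\mu$ then follows from standard barrier comparisons, bounding the oscillation of $\psi$ near each boundary point by that of the datum.

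The heart of the matter is the gradient bound at free boundary points. Fix $X_0\in\Gamma_{1,\ld,Q_1,\mu}$ with $|X_0-A_1|>\delta$ and $|X_0-A_2|>\delta$, and a ball $B_r(X_0)\Subset\O_\mu$. Since $\psi$ is continuous and $\psi(X_0)=0<Q$, for $r$ small we have $\psi<Q$ throughout $B_r(X_0)$, so there $\chi_{\{0<\psi<Q\}}=\chi_{\{\psi>0\}}$ and $\psi$ is a genuine one–phase minimizer of $\int|\g\psi|^2+(2\ld-2gy)\chi_{\{\psi>0\}}$, whose weight satisfies $0<2\ld-2gy\le2\ld$ (positivity uses $\ld\ge\frac{\max\{Q_1^2,(Q-Q_1)^2\}}{2H^2}+gH\ge gy$, cf. Remark \ref{rb1}). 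The Alt–Caffarelli comparison argument — replace $\psi$ on a subball by the harmonic function with the same boundary values and estimate the energy change against the measure term — yields the linear growth
$$\sup_{B_\rho(X_0)}\psi\le C\sqrt{\ld}\,\rho\quad\text{for all small }\rho,$$
with $C$ absolute; the same holds for $Q-\psi$ near $\Gamma_{2,\ld,Q_1,\mu}$. Since $\psi$ is harmonic in its positivity set, the interior gradient estimate $|\g\psi(X)|\le\frac{C}{d}\sup_{B_d(X)}\psi$ with $d=\mathrm{dist}(X,\{\psi=0\})$, combined with this linear growth, gives $|\g\psi(X)|\le C\sqrt{\ld}$ throughout a fixed neighborhood of the free boundary. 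This is precisely the asserted Lipschitz coefficient $C\sqrt{\ld}$ on any $K\Subset\O_\mu$ meeting a free boundary point, and it is independent of $Q$ because the local problem there is one–phase.

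It remains to bound $|D\psi|$ at points away from $\Gamma_1\cup\Gamma_2$. There $\psi$ is harmonic with values in $[0,Q]$, so interior harmonic gradient estimates control $|\g\psi|$ by $C/d$ times the oscillation $\le Q$ on compact interior sets, while near the fixed boundaries $N_{i,\mu},N_\mu,L_{i,\mu}$ (piecewise $C^{2,\alpha}$ and, once $|X-A_i|>\delta$, away from the corners) the boundary gradient estimate for harmonic functions with continuous data of size $\le Q$ gives $|\g\psi|\le C(\delta,\O_\mu)\,Q$. Combining this with the $C\sqrt{\ld}$ bound of the previous step over the whole set $\{X\in\bar\O_\mu:|X-A_i|>\delta\}$ produces $|D\psi(X)|\le C(\sqrt{\ld}+Q)$, as claimed.

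The main obstacle is the second step: establishing the linear growth with the sharp constant $C\sqrt{\ld}$ uniformly, and in particular verifying that near any given free boundary point the problem is genuinely one–phase so that the bound picks up no factor of $Q$. This relies on the continuity of $\psi$ to separate the two level sets $\{\psi=0\}$ and $\{\psi=Q\}$ locally and on the strict positivity $2\ld-2gy>0$ guaranteed by the lower bound on $\ld$. The exclusion of the endpoints $A_1,A_2$ is essential, since there the free boundary detaches from the fixed nozzle wall and the gradient need not remain bounded independently of $\delta$.
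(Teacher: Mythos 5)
Your overall strategy (one--phase linear growth at the free boundary, interior harmonic estimates, boundary gradient estimates near the fixed walls) is the route the paper itself takes, since its ``proof'' is only a citation of Section 5 of \cite{ACF1} (see also Lemma \ref{lf7} in the Appendix, which records the two--phase bounded gradient lemma from that source). However, there is a genuine gap at precisely the step you flag as the main obstacle: the reduction to a one--phase problem. You choose $r$ so small that $\psi<Q$ in $B_r(X_0)$ using only the \emph{continuity} of $\psi$; such an $r$ depends on the modulus of continuity of the particular minimizer, hence on $\ld$, $Q_1$, $\mu$ and the solution itself, not merely on $\delta$, $K$ and $\O_\mu$. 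The resulting Lipschitz bound is therefore not of the form $C(K,\O_\mu)\sqrt\ld$ with $C$ independent of the parameters, and this uniformity is exactly what the lemma is used for later: in Lemma \ref{lb7} the estimate $|\g\psi_{\ld_n,Q_{1,n},\mu}|\le C(\sqrt{\ld_n}+Q)$ is applied along sequences $\ld_n\to\ld$, $Q_{1,n}\to Q_1$ with $C$ depending only on the compact set and $\O_\mu$. The standard repair, which is the heart of the ACF argument you are trying to reproduce, is a quantitative dichotomy based on the non--degeneracy estimates (Lemmas \ref{lf1} and \ref{lf3}): for $X_0\in\Gamma_{1,\ld,Q_1,\mu}$ one has $\fint_{\p B_r(X_0)}\psi\, dS\le C^*\sqrt{2\ld}\,r$ for every admissible $r$, because $\psi(X_0)=0$; if now $\{\psi=Q\}$ meets $B_{3s}(X_0)$ and $B_{4s}(X_0)\subset\O_\mu$, then the second half of Lemma \ref{lf1}, applied on $\p B_{4s}(X_0)$, yields $Q-4C^*\sqrt{2\ld}\,s\le 4C^*\sqrt{2\ld}\,s$, i.e.\ $Q\le 8C^*\sqrt{2\ld}\,s$, so the trivial bound $\psi\le Q$ already gives $\sup_{B_s(X_0)}\psi\le C\sqrt\ld\, s$; otherwise $B_{3s}(X_0)\subset\{\psi<Q\}$, where $\psi$ is subharmonic and your one--phase argument applies. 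Either way one obtains linear growth with universal constants for all radii up to a fixed fraction of $\mathrm{dist}(X_0,\p\O_\mu)$, with no appeal to a modulus of continuity.

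The same dichotomy is also what is missing from your final claim, which you assert rather than prove: that the Lipschitz coefficient is bounded by $C(K,\O_\mu)\sqrt\ld$, independent of $Q$, on \emph{all} of a compact $K$ containing a free boundary point, including points of $K$ far from $\Gamma_{1,\ld,Q_1,\mu}\cup\Gamma_{2,\ld,Q_1,\mu}$. At such points your third step only yields $|\g\psi|\le CQ/d$, which is not $Q$--independent. To conclude one must propagate the linear growth from the free boundary across $K$: either only one phase is within reach of (a fixed neighborhood of) $K$, in which case the growth estimate plus the maximum principle bound the oscillation of $\psi$ over the relevant balls by $C\sqrt\ld$ times their radius, or both phases come near $K$, in which case the dichotomy above forces $Q\le C(K,\O_\mu)\sqrt\ld$ and even the crude bound $CQ/d$ has the required form. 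Without this quantitative alternative the claimed $Q$--independence, and hence the second assertion of the lemma, does not follow from your argument.
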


\subsection{The properties of the minimizer and the free boundaries}
Next, we will establish some important properties of the minimizer
and the free boundaries, such as monotonicity, uniqueness and
regularity. First, we consider the monotonicity of the minimizer
with respect to $x$. To this end, we will give a priori bounds of
the minimizer as follows.
\begin{lemma}\label{lb3}
 The minimizer $\psi_{\ld,Q_1,\mu}$ to the truncated variational
problem $(P_{\ld,Q_1,\mu})$ satisfies
\be\label{b80}\max\left\{-\sqrt{2\ld-2gh_1}y+Q_1,0\right\}\leq\psi_{\ld,Q_1,\mu}(x,y)\leq\min\left\{\sqrt{2\ld-2gh_2}y+Q_1,Q\right\}\
 \text{in} \ D_\mu,\ee and
\be\label{b81}\max\left\{\f{(y-H_{1,\mu})Q}{H_{2,\mu}-H_{1,\mu}},0\right\}<\psi_{\ld,Q_1,\mu}(x,y)<
Q\  \quad \text{in}\ \ \O_{\mu}\cap\{y> H\},\ee where $H_{1,\mu}$
and $H_{2,\mu}$ are defined in \eqref{b70}.
\end{lemma}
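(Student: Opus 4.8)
The plan is to obtain the two–sided bound \eqref{b80} by comparing the minimizer $\psi:=\psi_{\ld,Q_1,\mu}$ with the explicit one–dimensional profiles that already appear as its boundary data, and to obtain \eqref{b81} by the maximum principle. Write $\bar\psi(y)=\min\{\sqrt{2\ld-2gh_2}\,y+Q_1,\,Q\}$ and $\underline\psi(y)=\max\{-\sqrt{2\ld-2gh_1}\,y+Q_1,\,0\}$, and denote by $\bar\psi_0,\underline\psi_0$ their affine parts. By \eqref{b3} one has $\sqrt{2\ld-2gh_2}=(Q-Q_1)/h_2$ and $\sqrt{2\ld-2gh_1}=Q_1/h_1$, so $\bar\psi_0$ reaches the value $Q$ exactly at $y=h_2$ and $\underline\psi_0$ reaches $0$ exactly at $y=h_1$; these are precisely the downstream asymptotic stream functions, harmonic where $0<\bar\psi_0<Q$ and satisfying the Bernoulli relation $|\g\bar\psi_0|^2=2\ld-2gh_2$ on the line $y=h_2$ (and similarly for $\underline\psi_0$ at $y=h_1$). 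Since $h_1,h_2\le H$ by Proposition \ref{lb0}, one checks $\bar\psi(H)=Q$ and $\underline\psi(H)=0$, which guarantees that the comparison functions below are admissible and continuous across $y=H$.

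For the upper bound I would use the competitor $v=\min\{\psi,\bar\psi\}$, equal to $\psi$ in $\O_\mu\cap\{y\ge H\}$: the identity $\bar\psi(H)=Q\ge\psi$ makes $v$ continuous across $y=H$, and matching $\bar\psi$ against the boundary function $\Psi_{\ld,Q_1,\mu}$ on $\sigma_{1,\mu},\sigma_{2,\mu},N_\mu$ shows $v\in K_{\ld,Q_1,\mu}$. Setting $A=\{\psi>\bar\psi\}$, one has $A\subset\{y<h_2\}$ (because $\psi\le Q=\bar\psi$ for $y\ge h_2$), so on $A$ one may replace $\bar\psi$ by the harmonic function $\bar\psi_0$; writing $w=\psi-\bar\psi_0\ge0$ with $w=0$ on $\p A$ and integrating by parts gives $\int_A(|\g\psi|^2-|\g\bar\psi_0|^2)=\int_A|\g w|^2\ge0$. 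The remaining, and decisive, term is the characteristic–function contribution, which is supported on $A\cap\{\psi=Q\}$, i.e.\ on any pocket of air lying below the asymptotic height $h_2$. Because $2\ld-2gy>0$, turning such air into fluid strictly increases the potential part of $J_{\ld,\mu}$, so the naive energy comparison does not by itself exclude $A$; \emph{this is the main obstacle.} I would resolve it by exploiting that $\bar\psi$ is a strict supersolution of the one–phase Bernoulli problem for $y<h_2$ — indeed $|\g\bar\psi_0|^2=2\ld-2gh_2<2\ld-2gy$ there — so that the comparison principle for the free boundary forces $\Gamma_{2,\ld,Q_1,\mu}\subset\{y\ge h_2\}$, whence $A\cap\{\psi=Q\}=\emptyset$. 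Once this is known, the characteristic–function term vanishes, minimality gives $\int_A|\g w|^2\le0$, and therefore $w\equiv0$ and $|A|=0$, i.e.\ $\psi\le\bar\psi$. The lower bound is entirely symmetric, using $v=\max\{\psi,\underline\psi\}$, the set $B=\{\psi<\underline\psi\}\subset\{y<h_1\}$, and the fact that $\underline\psi$ is a strict subsolution for $y<h_1$, which confines $\Gamma_{1,\ld,Q_1,\mu}$ to $\{y\ge h_1\}$ and excludes air ($\psi=0$) below $h_1$.

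For \eqref{b81} the argument is clean. In $\O_\mu\cap\{y>H\}$ one has $0<\psi<Q$ by Proposition \ref{lb1}, so $\psi$ is harmonic there, as is the affine function $\ell(y)=\f{(y-H_{1,\mu})Q}{H_{2,\mu}-H_{1,\mu}}$; thus $\psi-\ell$ is harmonic. On the boundary of this region one has $\psi-\ell=-\ell\ge0$ on $N_{1,\mu}$ (where $y\le H_{1,\mu}$ and $\ell\le0$), $\psi-\ell=Q-\ell\ge0$ on $N_{2,\mu}$ (where $\ell\le Q$), $\psi-\ell=0$ on $\sigma_\mu$, and $\psi-\ell\ge-\ell(H)>0$ on the interface $y=H$ since $H<H_{1,\mu}$ forces $\ell(H)<0$. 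The strong minimum principle then yields $\psi>\ell$ in the interior, and combined with $\psi>0$ this gives $\psi>\max\{\ell,0\}$; the inequality $\psi<Q$ is again Proposition \ref{lb1}. The main difficulty of the whole lemma is thus concentrated in the free–boundary comparison of the previous paragraph, where the two–phase structure of $J_{\ld,\mu}$ prevents a pure truncation argument and the precise matching of the profiles to the Bernoulli condition at $y=h_1,h_2$ must be used.
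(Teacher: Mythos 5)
Your argument for \eqref{b81} is correct and is essentially the paper's own (maximum principle against the affine function, combined with Proposition \ref{lb1}). The genuine gap is in \eqref{b80}, at precisely the step you label the main obstacle. Having correctly observed that the energy comparison cannot by itself rule out the set $A\cap\{\psi=Q\}$, you dispose of it by asserting that, since $\bar\psi_0$ is a strict supersolution below $h_2$, ``the comparison principle for the free boundary forces $\Gamma_{2,\ld,Q_1,\mu}\subset\{y\ge h_2\}$.'' But this confinement is exactly the content of the upper bound you are trying to prove (cf.\ Remark \ref{rb2}, which records it as a \emph{consequence} of \eqref{b80}), and no off-the-shelf comparison principle applicable to the gravity-modulated Bernoulli condition $|\g\psi|=\sqrt{2\ld-2gy}$ is cited or proved; the step is circular. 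Nor can it be repaired by a touching-point argument with your \emph{fixed} comparison function: $\bar\psi$ attains the value $Q$ only on the line $\{y=h_2\}$, so if $\Gamma_{2,\ld,Q_1,\mu}$ dips below $h_2$ there is no point of contact between the two free boundaries at which Hopf's lemma (or a viscosity-solution argument) could be applied, and the pointwise inequality $2\ld-2gh_2<2\ld-2gy$ for $y<h_2$ yields nothing on its own.

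The missing idea --- and the paper's actual proof --- is to \emph{deform} the comparison function: one works with the family $\omega_{\e}(y)=\min\left\{\sqrt{2\ld-2g(h_2-\e)}\,y+Q_1,\,Q\right\}$, $\e\ge0$. For $\e$ large, the line $y_{\e}=(Q-Q_1)/\sqrt{2\ld-2g(h_2-\e)}$ lies strictly below $\Gamma_{2,\ld,Q_1,\mu}$ (this initialization uses the Lipschitz continuity of the minimizer near $N_\mu$, Lemma \ref{lb60}), so $\psi$ and $\omega_{\e}$ are harmonic in the strip $\{y<y_{\e}\}$ and the maximum principle gives $\psi<\omega_{\e}$ there. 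One then decreases $\e$ to the smallest value $\e_0$ for which the comparison persists; if $\e_0>0$, there is a first touching point $X_0=(x_0,y_0)$ with $y_0=y_{\e_0}<h_2$, necessarily a free boundary point of $\psi$, and Hopf's lemma gives $\sqrt{2\ld-2gy_0}=\f{\p\psi}{\p\nu}(X_0)>\f{\p\omega_{\e_0}}{\p\nu}=(Q-Q_1)/y_0=\sqrt{2\ld-2gh_2}\,h_2/y_0$, i.e.\ $y_0\sqrt{2\ld-2gy_0}>h_2\sqrt{2\ld-2gh_2}$ with $y_0<h_2$. This contradicts the strict monotonicity of $t\mapsto t\sqrt{2\ld-2gt}$ on $(0,H)$ --- and it is exactly here that the hypotheses $\ld\ge\f{\max\{Q_1^2,(Q-Q_1)^2\}}{2H^2}+gH$ and $Q>2\sqrt{gH^3}$ enter; your proposal never invokes them, a further sign that the key step is absent. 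Note also that the slope appearing in the Hopf inequality is the deformed one, $\sqrt{2\ld-2g(h_2-\e_0)}$, not your $\sqrt{2\ld-2gh_2}$, which is why your strict-supersolution inequality is not the relevant one. Finally, once the confinement is known, your energy identity is superfluous: on $A=\{\psi>\bar\psi\}$ one then has $0<\psi<Q$, so $w=\psi-\bar\psi_0$ is harmonic with $w=0$ on $\p A$, and the maximum principle alone gives $A=\varnothing$.
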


\begin{remark}\label{rb2} The bound \eqref{b80} gives that
$$
0<-\sqrt{2\ld-2gh_1}y+Q_1\leq \psi_{\ld,Q_1,\mu}(x,y)\quad\quad
\text{for}\quad y< h_1,
$$
and $$\psi_{\ld,Q_1,\mu}(x,y)\leq\sqrt{2\ld-2gh_2}y+Q_1<Q \quad\quad
\text{for}\quad y< h_2,
$$
 which in fact imply that the free boundary
$\Gamma_{i,\ld,Q_1,\mu}$ lies above
 the asymptotic height $h_i$ for $i=1,2$. On another side, the
 bounds in \eqref{b81} imply that the free boundaries lies below
 $y=H$.
 \end{remark}
\begin{proof} Consider the upper bound in \eqref{b80} first. Since $\psi_{\ld,Q_1,\mu}\leq Q$ in $\O_\mu$,
it suffices to obtain the upper bound in \eqref{b80} for the case $Q_1<Q$. %Since
%$\psi_{\ld,Q_1,\mu}$ is Lipschitz continuous near $\{y=0\}$ in Lemma
%\ref{lb60}, we have that $dist(\Gamma_{2,\ld,Q_1,\mu},\{y=0\})\geq
%c>0$.

Define an auxiliary comparison function
$$\text{$\o_\e(y)=\min\left\{\sqrt{2\ld-2g(h_2-\e)}y+Q_1,Q\right\}$
for any $\e\geq 0$,}$$ and a strip
$$D_{\mu,\e}=\O_\mu\cap\left\{y<\f{Q-Q_1}{\sqrt{2\ld-2g(h_2-\e)}}\right\}\ \ \text{for any $\e\geq 0$}.$$

Due to Lemma \ref{lb60}, $\psi_{\ld,Q_1,\mu}$ is Lipschitz
continuous near $N_\mu$, this implies that there is a positive
distance between the right free boundary and $N_\mu$. Thus, take a
sufficiently large $\e>0$, such that
$$\text{$y=\f{Q-Q_1}{\sqrt{2\ld-2g(h_2-\e)}}$ lies below the right free boundary
$\Gamma_{2,\ld,Q_1,\mu}$.}$$ It is easy to check that
$$\text{$\o_\e>\psi_{\ld,Q_1,\mu}$\quad\quad on
$\sigma_{1,\mu}\cup\sigma_{2,\mu}$}$$ for any $\e\geq0$, and it
follows from Proposition \ref{lb1} that
$$\text{$\psi_{\ld,Q_1,\mu}\leq Q=\o_\e\ $\quad\quad on
$\ y=\f{Q-Q_1}{\sqrt{2\ld-2g(h_2-\e)}}$,}$$ for $Q_1<Q$. If $\e$ is
sufficiently large, $\psi_{\ld,Q_1,\mu}$ and $\o_\e$ are harmonic in
$D_{\mu,\e}$. Applying the maximum principle in $D_{\mu,\e}$ gives
that
$$\text{$\psi_{\ld,Q_1,\mu}<\o_\e$ in $D_{\mu,\e}$,}$$ provided that $\e$ is
sufficiently large.

Let $\e_0\geq 0$ be the smallest one and
$y_0=\f{Q-Q_1}{\sqrt{2\ld-2g(h_2-\e_0)}}$ (see Figure \ref{f7}),
such that
$$\psi_{\ld,Q_1,\mu}(x,y)<\o_{\e_0}(y)\ \ \text{in}\ \ D_{\mu,\e_0},\ \ \text{and}\ \ \o_{\e_0}(y_0)=\psi_{\ld,Q_1,\mu}(x_0,y_0)
\ \ \text{for some $x_0\in(-\mu,\mu)$. }$$

\begin{figure}[!h]
\includegraphics[width=100mm]{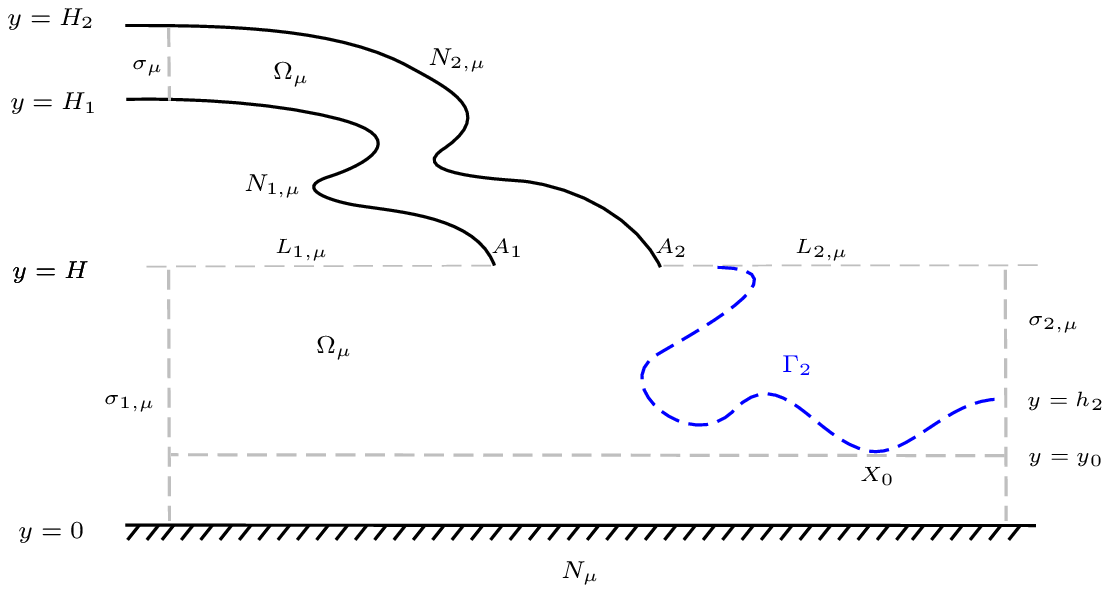}
\caption{$\o_{\e_0}$ and $\psi_{\ld,Q_1,\mu}$}\label{f7}
\end{figure}

It suffices to show that $\e_0=0$, and then the upper bound of the
minimizer follows. If not, then $\e_0>0$, which implies that
$$y_0=\f{Q-Q_1}{\sqrt{2\ld-2g(h_2-\e_0)}}<h_2.$$ Denote $X_0=(x_0,y_0)$. Since
the free boundary $\Gamma_{2,\ld,Q_1,\mu}$ is analytic at $X_0$, it
follows from Hopf's lemma that
$$\sqrt{2\ld-2gy_0}=\f{\p\psi_{\ld,Q_1,\mu}}{\p\nu}>\f{\p\o_{\e_0}}{\p\nu}=\sqrt{2\ld-2g(h_2-\e_0)}=\f{Q-Q_1}{y_0}=\f{\sqrt{2\ld-2gh_2}h_2}{y_0}\ \ \text{at}\ \ X_0,$$
where $\nu=(0,1)$ is the outer normal vector to
$\Gamma_{2,\ld,Q_1,\mu}$ at $X_0=(x_0,y_0)$. This is nothing but
\be\label{b800}y_0\sqrt{2\ld-2gy_0}>h_2\sqrt{2\ld-2gh_2}\ \
\text{for $y_0<h_2$}.\ee On another side, it is clear that the
function $t\sqrt{2\ld-2gt}$ is strictly increasing with respect to
$t\in(0,H)$, which contradicts to \eqref{b800}.

%it is easy to check that $\min\{\psi,\psi_1\}\in K_{Q_1,Q_2,L}$.
%Then we have
%\be\label{b9}\ba{rl}0\leq&J_{Q_1,Q_2,L}(\min\{\psi,\psi_1\}))-J_{Q_1,Q_2,L}(\psi)\\
%=&\int_{\Omega_L}|\nabla\min\{
%\psi,\psi_1\}|^2-|\g\psi|^2dxdy\\
%&+\int_{D_L}(\ld-2gy)\left(I_{\{-Q_2<min\{\psi,\psi_1\}<Q_1\}}-I_{\{0<\psi<Q\}}\right)dxdy\\
%=&I_1+I_2, \ea\ee

%For the right hand terms of \eqref{b9}, we have
%\be\label{b10}\ba{rl}I_1
%=&-\int_{\Omega_L\cap\{\psi>\psi_1\}}|\nabla
%(\psi_1-\psi)|^2dxdy+2\int_{\Omega_L\cap\{\psi>\psi_1\}}\g\psi_1\cdot\nabla
%(\psi_1-\psi)dxdy\\
%=&-\int_{\Omega_L\cap\{Q_1=\psi>\psi_1\}}|\nabla
%\psi_1|^2dxdy-\int_{\Omega_L\cap\{Q_1>\psi>\psi_1\}}|\nabla
%(\psi_1-\psi)|^2dxdy\\
%=&-\int_{\Omega_L\cap\{Q_1=\psi>\psi_1\}}\ld+H
%dxdy-\int_{\Omega_L\cap\{Q_1>\psi>\psi_1\}}|\nabla
%(\psi_1-\psi)|^2dxdy, \ea\ee and \be\label{b11}\ba{rl}I_2
%=\int_{D_L\cap\{Q=\psi>\psi_1\}}(\lambda-y) dxdy, \ea\ee Combining
%\eqref{c9}-\eqref{c11}, we have
% \be\label{b12}\ba{rl}0\leq-\int_{\Omega_L\cap\{Q_1=\psi>\psi_1\}}H+y
%dxdy-\int_{(\Omega_L\setminus D_L)\cap\{Q_1>\psi>\psi_1\}}|\nabla
%(\psi_1-\psi)|^2dxdy, \ea\ee which implies that
%$$\psi(x,y)\leq\psi_1(y)=\min\left\{\sqrt{\ld+H}(y+H),Q_1\right\}\ \ \text{in}\ \ D_L.$$

Similarly, one can show that
$$\psi(x,y)\geq\max\left\{-\sqrt{2\ld-2gh_1}y+Q_1,0\right\}\ \ \text{in}\ \ D_\mu.$$

 Finally, consider the estimate \eqref{b81} in the nozzle. Denote
$G_\mu=\O_\mu\cap\{y>H_{1,\mu}\}$. Applying the maximum principle in
$G_\mu$ gives that
$\f{(y-H_{1,\mu})Q}{H_{2,\mu}-H_{1,\mu}}\leq\psi_{\ld,Q_1,\mu}$ in
$G_\mu$. Since $0<\psi_{\ld,Q_1,\mu}< Q$ in $\O_\mu\cap\{y>H\}$, it
follows
$$\max\left\{\f{(y-H_{1,\mu})Q}{H_{2,\mu}-H_{1,\mu}},0\right\}<\psi_{\ld,Q_1,\mu}(x,y)<
Q\ \quad\quad  \text{in}\ \ \O_{\mu}\cap\{y> H\}.$$

\end{proof}

With the aid of Lemma \ref{lb3}, one can obtain the uniqueness and
monotonicity of the minimizer $\psi_{\ld,Q_1,\mu}$.
\begin{prop}\label{lb4}
 The minimizer $\psi_{\ld,Q_1,\mu}(x,y)$ to the truncated variational
problem $(P_{\ld,Q_1,\mu})$ is increasing with respect to $x$.
Furthermore, the minimizer $\psi_{\ld,Q_1,\mu}$ is unique for any
fixed $\ld,Q_1$ and $\mu$.

\end{prop}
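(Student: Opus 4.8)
The plan is to derive both statements from the translation–comparison method, resting on the lattice (rearrangement) structure of $J_{\ld,\mu}$. The algebraic engine is the pointwise identity, valid a.e.\ for any $\psi_1,\psi_2\in H^1$,
\[
|\g\max(\psi_1,\psi_2)|^2+|\g\min(\psi_1,\psi_2)|^2=|\g\psi_1|^2+|\g\psi_2|^2,
\]
together with $\chi_{\{0<\max<Q\}}+\chi_{\{0<\min<Q\}}=\chi_{\{0<\psi_1<Q\}}+\chi_{\{0<\psi_2<Q\}}$. Multiplying the second identity by the nonnegative weight $(2\ld-2gy)$ (nonnegative on $D_\mu$ since $\ld\geq gH\geq gy$) and integrating yields $J_{\ld,\mu}(\max)+J_{\ld,\mu}(\min)=J_{\ld,\mu}(\psi_1)+J_{\ld,\mu}(\psi_2)$ whenever the four functions are compared over a common domain. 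In particular, if two admissible competitors are both minimizers, then so are their pointwise maximum and minimum.

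For the monotonicity I would fix $\tau>0$, set $\psi^\tau(x,y)=\psi_{\ld,Q_1,\mu}(x-\tau,y)$ on the shifted domain $\O_\mu^\tau=\O_\mu+\tau e_1$, and write $G=\O_\mu\cap\O_\mu^\tau$ (denote by $J_{\ld,\mu}(\cdot;G)$ the integral defining $J_{\ld,\mu}$ taken over $G$). After establishing $\psi^\tau\le\psi_{\ld,Q_1,\mu}$ on all of $\p G$, I would introduce the competitors $\phi=\max(\psi_{\ld,Q_1,\mu},\psi^\tau)$ in $G$ and $\phi=\psi_{\ld,Q_1,\mu}$ on $\O_\mu\setminus G$, and $\eta=\min(\psi_{\ld,Q_1,\mu},\psi^\tau)$ in $G$ and $\eta=\psi^\tau$ on $\O_\mu^\tau\setminus G$; the boundary ordering makes $\phi$ admissible for $(P_{\ld,Q_1,\mu})$ and $\eta$ admissible for the translated problem. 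Minimality of $\psi_{\ld,Q_1,\mu}$ and $\psi^\tau$ together with the lattice identity over $G$ force $J_{\ld,\mu}(\phi;G)=J_{\ld,\mu}(\psi_{\ld,Q_1,\mu};G)$, so $\phi$ is again a minimizer of $(P_{\ld,Q_1,\mu})$. Since $\phi\ge\psi_{\ld,Q_1,\mu}$, both are harmonic in the connected flow region $\O_\mu\cap\{0<\psi_{\ld,Q_1,\mu}<Q\}$ by Proposition \ref{lb1}, and they coincide on the nonempty relatively open set where $0<\psi_{\ld,Q_1,\mu}<Q$ inside $\O_\mu\setminus G$, the strong maximum principle applied to the nonnegative harmonic function $\phi-\psi_{\ld,Q_1,\mu}$ gives $\phi\equiv\psi_{\ld,Q_1,\mu}$, i.e.\ $\psi^\tau\le\psi_{\ld,Q_1,\mu}$ in $G$. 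As this holds for every admissible $\tau>0$, the minimizer is nondecreasing in $x$.

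The step I expect to be the main obstacle is the boundary ordering $\psi^\tau\le\psi_{\ld,Q_1,\mu}$ on $\p G$, which has to be checked piece by piece against the nozzle geometry. On $N_\mu$ both functions equal $Q_1$; on the image (under $\p\O_\mu^\tau$) of the left nozzle wall one has $\psi^\tau=0\le\psi_{\ld,Q_1,\mu}$, while on the right nozzle wall $\psi_{\ld,Q_1,\mu}=Q\ge\psi^\tau$, and since $g_1<g_2$ are graphs these remain the genuine pieces of $\p G$ for $\tau$ small. The essential use of the a priori bounds of Lemma \ref{lb3} is on the artificial truncation segments: on $\sigma_{2,\mu}$ the datum $\min\{\sqrt{2\ld-2gh_2}\,y+Q_1,Q\}$ is exactly the upper bound for $\psi_{\ld,Q_1,\mu}$, so $\psi^\tau\le\psi_{\ld,Q_1,\mu}$ there; on the shifted segment $\sigma_{1,\mu}^\tau$ the datum $\max\{-\sqrt{2\ld-2gh_1}\,y+Q_1,0\}$ is exactly the lower bound, again giving $\psi^\tau\le\psi_{\ld,Q_1,\mu}$; the top segment is handled the same way via \eqref{b81}. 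This bookkeeping, reconciling the shifted nozzle walls with the truncation data, is where the specific choice of boundary values and Lemma \ref{lb3} become indispensable. A secondary care point is the legitimacy of the strong maximum principle across the free boundary, which is justified because $\psi_{\ld,Q_1,\mu}$ is harmonic throughout the connected flow region and the gradient relation in Proposition \ref{lb1}(4) prevents the two free boundaries from crossing.

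Uniqueness follows from the same machinery, and is in fact simpler since two minimizers $\psi_1,\psi_2$ share the datum $\Psi_{\ld,Q_1,\mu}$ on $\p\O_\mu$: then $\max(\psi_1,\psi_2)$ and $\min(\psi_1,\psi_2)$ already belong to $K_{\ld,Q_1,\mu}$, so no translation is needed, and the lattice identity shows both are minimizers. Arguing as above with $M=\max(\psi_1,\psi_2)\ge\psi_1$ (both minimizers of $(P_{\ld,Q_1,\mu})$, agreeing on the relatively open set $\{\psi_1\ge\psi_2\}$, which meets the flow region near $\p\O_\mu$), the strong maximum principle forces $M\equiv\psi_1$, that is $\psi_1\ge\psi_2$; by symmetry $\psi_2\ge\psi_1$, whence $\psi_1=\psi_2$. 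The monotonicity just established makes this clean, since it guarantees each flow region has the form $\{x>k_1(y)\}\cap\{x<k_2(y)\}$, so the free boundaries are $x$-graphs and cannot obstruct the maximum-principle comparison.
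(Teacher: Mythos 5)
Your overall strategy---translating the minimizer, using the lattice identity for $\max/\min$ to show that suitable rearrangements of minimizers are again minimizers, and closing with a maximum-principle comparison anchored by the truncation data of Lemma \ref{lb3}---is the same Alt--Caffarelli--Friedman device the paper uses, and your boundary bookkeeping on $\p G$ is a legitimate substitute for the paper's trick of extending $\psi$ and $\t\psi_\e$ beyond their domains by the explicit barrier functions. But the way you close the comparison has genuine gaps. First, the assertion that $\phi=\max(\psi_{\ld,Q_1,\mu},\psi^\tau)$ is harmonic in $\O_\mu\cap\{0<\psi_{\ld,Q_1,\mu}<Q\}$ is false: a minimizer is harmonic only in its \emph{own} flow region $\{0<\phi<Q\}$, and since $\phi\ge\psi_{\ld,Q_1,\mu}$, the set $\{0<\psi_{\ld,Q_1,\mu}<Q\}$ may meet $\{\phi=Q\}$ and hence pieces of $\p\{\phi<Q\}$, across which $|\g\phi|$ jumps from $\sqrt{2\ld-2gy}$ to $0$; so $\phi-\psi_{\ld,Q_1,\mu}$ is only superharmonic there, a property not recorded in Proposition \ref{lb1} (the paper instead uses the harmonic-replacement argument, claim \eqref{b18}). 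Second, even granting $\phi\equiv\psi_{\ld,Q_1,\mu}$ on $\{0<\psi_{\ld,Q_1,\mu}<Q\}$, your ``i.e.\ $\psi^\tau\le\psi_{\ld,Q_1,\mu}$ in $G$'' does not follow: on $G\cap\{\psi_{\ld,Q_1,\mu}=0\}$ nothing yet prevents $\psi^\tau$ from equalling $Q$ on an open set, and excluding this requires a separate argument (this is what the case analysis in the paper's claim \eqref{b189}, in particular the subcases set in the extension region, accomplishes). Third, the connectivity of the flow region, which your strong maximum principle needs in order to propagate equality from the left sliver to the whole flow region, is a genuine lemma (claim \eqref{b182} in the paper); ``the free boundaries cannot cross by Proposition \ref{lb1}(4)'' is not a proof of it.

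The most serious defect is in the uniqueness part, where you drop the translation. Two minimizers $\psi_1,\psi_2$ share the boundary datum, so there is no boundary portion on which one lies strictly below the other, and your claimed nonempty open agreement set (``$\{\psi_1\ge\psi_2\}$ \dots meets the flow region near $\p\O_\mu$'') is unsubstantiated: it is perfectly consistent with continuity and equal boundary values that $\psi_2>\psi_1$ at every interior point, in which case $M=\max(\psi_1,\psi_2)$ agrees with $\psi_1$ at no interior point and the strong maximum (or minimum) principle has nothing to bite on. The paper avoids exactly this by running the translation comparison with two \emph{possibly different} minimizers: the shift creates the strict gap $\t\psi_\e<Q=\psi$ on $N_{2,\mu}$ that anchors the ordering, one proves $\psi(x,y)\ge\t\psi(x-\e,y)$ and $\psi(x,y)\le\t\psi(x+\e,y)$ for every $\e>0$, and then lets $\e\to0$ to obtain $\psi=\t\psi$; monotonicity follows by taking $\t\psi=\psi$. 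Your monotonicity argument, once repaired as above, extends verbatim to this two-minimizer setting, and that is the correct way to rescue your uniqueness claim.
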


\begin{proof}
%Consider the uniqueness of the minimizer first.
Suppose that $\psi(x,y)=\psi_{\ld,Q_1,\mu}(x,y)$ and
$\t\psi(x,y)=\t\psi_{\ld,Q_1,\mu}(x,y)$ are two minimizers to the
truncated variational problem $(P_{\ld,Q_1,\mu})$. Define
$\t\psi_\e(x,y)=\t\psi(x-\e,y)$ for any $\e\geq0$, the corresponding
functional $J_{\ld,\mu}^\e$ with admissible set $K_{\ld,Q_1,\mu}^\e$
in truncated domain $\O_{\mu}^\e=\{(x,y)\mid(x-\e,y)\in\O_{\mu}\}$.
Extend $\psi(x,y)$ and $\t\psi_\e(x,y)$ to larger domain,
respectively, as
$$\text{$\psi(x,y)=\min\left\{\sqrt{2\ld-2gh_2}y+Q_1,Q\right\}$ for
$\mu\leq x\leq \mu+\e, 0\leq y\leq H$}$$ and
$$\t\psi_\e(x,y)=\left\{\ba{ll}
\max\left\{-\sqrt{2\ld-2gh_1}y+Q_1,0\right\},\  &\text{for}\
-\mu\leq x\leq -\mu+\e, 0\leq y\leq H,\\
\max\left\{\f{(y-H_{1,\mu})Q}{H_{2,\mu}-H_{1,\mu}},0\right\},\
&\text{for}\ -\mu\leq x\leq -\mu+\e, H\leq y\leq
H_{2,\mu}.\ea\right.$$

Set
$$\psi_1=\max\{\psi,\t\psi_\e\}\ \ \text{and}\ \ \psi_2=\min\{\psi,\t\psi_\e\}.$$
Then Lemma \ref{lb3} gives that \be\label{b13}\psi_1=\psi\ \
\text{in $\O_\mu^\e\setminus\O_\mu$ and }\  \psi_2=\t\psi_\e\ \
\text{in $\O_\mu\setminus\O_\mu^\e$},\ee which implies that
$\psi_1\in K_{\ld,Q_1,\mu}$ and $\psi_2\in K_{\ld,Q_1,\mu}^\e$.
Next, we claim that \be\label{b14}J_{\ld,\mu}(\psi)=
J_{\ld,\mu}(\psi_1)\ \ \text{and}\ \ J_{\ld,\mu}^\e(\t\psi_\e)=
J_{\ld,\mu}^\e(\psi_2).\ee Indeed, since $J_{\ld,\mu}(\psi)\leq
J_{\ld,\mu}(\psi_1)$ and $J_{\ld,\mu}^\e(\t\psi_\e)\leq
J_{\ld,\mu}^\e(\psi_2)$, \eqref{b14} will follow from
\be\label{b15}J_{\ld,\mu}(\psi)+J_{\ld,\mu}^\e(\t\psi_\e)=
J_{\ld,\mu}(\psi_1)+ J_{\ld,\mu}^\e(\psi_2).\ee In fact, \eqref{b13}
implies that $\psi\geq\t\psi_\e$ in
$(\O_\mu^\e\setminus\O_\mu)\cup(\O_\mu\setminus\O_\mu^\e)$.
Therefore one has
\be\label{b16}\Omega_\mu\cap\{\psi<\t\psi_\e\}=\Omega_\mu^\e\cap\{\psi<\t\psi_\e\}.\ee
Due to \eqref{b16}, one may get
 \be\label{b17}\ba{rl}&\int_{\Omega_\mu}|\nabla
\psi_1|^2dxdy+\int_{\Omega_\mu^\e}|\nabla
\psi_2|^2dxdy\\
=&\int_{\Omega_\mu\cap\{\psi\geq\t\psi_\e\}}|\nabla
\psi|^2dxdy+\int_{\Omega_\mu\cap\{\psi<\t\psi_\e\}}|\nabla
\t\psi_\e|^2dxdy\\
&+\int_{\Omega_\mu^\e\cap\{\psi\geq\t\psi_\e\}}|\nabla
\t\psi_\e|^2dxdy+\int_{\Omega_\mu^\e\cap\{\psi<\t\psi_\e\}}|\nabla
\psi|^2dxdy\\
=&\int_{\Omega_\mu\cap\{\psi\geq\t\psi_\e\}}|\nabla
\psi|^2dxdy+\int_{\Omega_\mu^\e\cap\{\psi<\t\psi_\e\}}|\nabla
\t\psi_\e|^2dxdy\\
&+\int_{\Omega_\mu^\e\cap\{\psi\geq\t\psi_\e\}}|\nabla
\t\psi_\e|^2dxdy+\int_{\Omega_\mu\cap\{\psi<\t\psi_\e\}}|\nabla
\psi|^2dxdy\\
=&\int_{\Omega_\mu}|\nabla \psi|^2dxdy+\int_{\Omega_\mu^\e}|\nabla
\t\psi_\e|^2dxdy. \ea\ee Similarly, one has
$$\ba{rl}&\int_{D_\mu}(2\ld-2gy)\chi_{\{0<\psi<Q\}}dxdy+\int_{D_\mu^\e}(2\ld-2gy)\chi_{\{0<\t\psi_\e<Q\}}dxdy\\
=&\int_{D_\mu}(2\ld-2gy)\chi_{\{0<\psi_1<Q\}}dxdy+\int_{D_\mu^\e}(2\ld-2gy)\chi_{\{0<\psi_2<Q\}}dxdy,\ea
$$ which, together with \eqref{b17}, gives \eqref{b15}. And hence
the claim \eqref{b14} is proved.

Moreover, we will show that if $0<\psi(X_0)=\t\psi_\e(X_0)<Q$ with
 $X_0\in\O_\mu\cap\O_\mu^\e$, then \be\label{b18}\text{either\quad
$\psi\geq \t\psi_\e$\quad or\quad $\psi\leq\t\psi_\e$\quad in some
neighborhood of $X_0$.}\ee By the continuity of $\psi$, $0<\psi<Q$
in $B_r(X_0)\subset\O_\mu\cap\O_\mu^\e$ for some small $r>0$. Define
a function $\phi$ as follows, \be\label{b180}\Delta\phi=0\ \
\text{in}\ \ B_r(X_0)\ \ \text{and $\phi=\psi_1$ on $\p
B_r(X_0)$}.\ee Suppose that \eqref{b18} is not true, we then claim
that $\psi_1$ is not harmonic in $B_r(X_0)$ for any small $r>0$. In
fact, if $\psi_1$ solves the boundary value problem \eqref{b180},
and it is easy to check that $\Delta\psi=0$ in $B_r(X_0)$ and
$\psi\leq\psi_1$ on $\p B_r(X_0)$, the strong maximum principle
gives that $\psi\equiv\psi_1$ in $B_r(X_0)$, due to
$\psi(X_0)=\psi_1(X_0)$. Namely, $\psi\geq \t\psi_\e$ in $B_r(X_0)$,
which contradicts to our assumption.

Since $\psi_1$ is not harmonic in $B_r(X_0)$, and $\psi_1\neq\phi$
in $B_r(X_0)$. Then it holds that
\be\label{b810}\ba{rl}&\int_{B_r(X_0)}|\g\phi|^2dxdy-\int_{B_r(X_0)}|\g\psi_1|^2dxdy\\
=&-\int_{B_r(X_0)}|\g(\phi-\psi_1)|^2dxdy+2\int_{B_r(X_0)}\g\phi\cdot\g(\phi-\psi_1)dxdy\\
=&-\int_{B_r(X_0)}|\g(\phi-\psi_1)|^2dxdy\\
<&0.\ea\ee Extend $\phi=\psi_1$ outside $B_r(X_0)$, such that
$\phi\in K_{\ld,Q_1,\mu}$. It follows from \eqref{b810} and the fact
that $0<\phi<Q$ in $\overline{B_r(X_0)}$ that
$$J_{\ld,\mu}(\phi)<J_{\ld,\mu}(\psi_1)=J_{\ld,\mu}(\psi),$$
which contradicts to the fact that $\psi$ is a minimizer to the
variational problem $(P_{\ld,Q_1,\mu})$. Hence, the claim
\eqref{b18} is proved.

Next, we will show that
\be\label{b182}\text{$\O_\mu\cap\{0<\psi<Q\}$ and
$\O_\mu^\e\cap\{0<\t\psi_\e<Q\}$ are connected}.\ee

If \eqref{b182} fails, then without loss of generality, we assume
that $\O_\mu\cap\{0<\psi<Q\}$ is not connected. Since $0<\psi<Q$ in
$\O_\mu\cap\{y>H\}$ (see \eqref{b81}), let $E$ be a maximal
connected subset of $\O_\mu\cap\{0<\psi<Q\}$, such that
$\O_\mu\cap\{y>H\}\subset E$. Then there exists a point
$Y_0\in\O_\mu\cap\{0<\psi<Q\}$, such that $Y_0\notin E$. The
continuity of $\psi$ gives that there exists a maximal connected
subset $F$ in $\O_\mu\cap\{0<\psi<Q\}$ and  $Y_0\in F$.

If $\p\O_\mu\cap \p F=\varnothing$, we first show that $\psi=0$ or
$\psi=Q$ on $\p F$. If not, there exists a point $X_0\in \p F$ with
$0<\psi(X_0)<Q$. Due to the continuity of $\psi$ again, there exists
a connected subsect $\t F$ of $\O_\mu\cap\{0<\psi<Q\}$, such that $F
\subseteq\t F$, which contradicts to the definition of the set $F$.
Since $\psi$ is harmonic in $F$, the strong maximum principle gives
that $\psi\equiv0$ or $\psi\equiv Q$ in $F$, which contradicts to
the fact $0<\psi(Y_0)<Q$ with $Y_0\in F$.

If $\p\O_\mu\cap \p F\neq\varnothing$, the definition of $F$ implies
that $\psi\equiv0$ or $\psi\equiv Q$ on $\O_\mu\cap\p F$. Without
loss of generality, one may assume that $\psi\equiv0$ on
$\O_\mu\cap\p F$. Hence, $\psi\equiv0$ on $\p\O_\mu\cap\p F$, since
$\psi$ is continuous up to $\p\O_\mu$ and is monotone with respect
to $y$ on $\p\O_\mu$. Therefore, $\psi\equiv 0$ on $\p F$, which
together with the strong maximum principle gives that $\psi\equiv 0$
in $F$. This is a contradiction to the definition of $F$.

Hence, the claim \eqref{b182} is obtained.

By virtue of \eqref{b18}, one has that\be\label{b189}\text{there
does not exist some point $X_0\in\O_\mu$, such that
$0<\psi(X_0)=\t\psi_\e(X_0)<Q$}.\ee In fact, suppose that there
exists a point $X_0=(x_0,y_0)\in\O_\mu$, such that
$0<\psi(X_0)=\t\psi_\e(X_0)<Q$. Then there are two cases to be
considered.

 {\bf Case 1.} $X_0\in\O_\mu\cap\O_\mu^\e$. In view of
the claim \eqref{b18}, we have that either $\psi\geq \t\psi_\e$ or
$\psi\leq\t\psi_\e$ in $B_r(X_0)\subset
(\O_\mu\cap\{0<\psi<Q\})\cap(\O_\mu^\e\cap\{0<\t\psi_\e<Q\})$ for
some small $r>0$. The strong maximum principle gives that
$\psi\equiv\t\psi_\e$ in $B_r(X_0)$, due to
$\psi(X_0)=\t\psi_\e(X_0)$. By virtue of \eqref{b182}, applying
strong maximum principle again, we can conclude that
$$\text{$\psi\equiv\t\psi_\e$ in $(\O_\mu\cap\{0<\psi<Q\})\cap(\O_\mu^\e\cap\{0<\t\psi_\e<Q\})$},$$ which leads to a
contradiction to the fact that $\t\psi_\e<Q=\psi$ on $N_{2,\mu}$.

{\bf Case 2.} $X_0\in\O_\mu\setminus\O_\mu^\e$. Then there are three
subcases.

{\bf Subcase 2.1.}
$X_0\in(\O_\mu\setminus\O_\mu^\e)\cap\{y>H_{1,\mu}\}$. The extension
of $\t\psi_\e$ implies that
$0<\psi(X_0)=\t\psi_\e(X_0)=\f{(y_0-H_{1,\mu})Q}{H_{2,\mu}-H_{1,\mu}}<Q,$
which contradicts to the lower bound in \eqref{b81}.

{\bf Subcase 2.2.} $X_0\in(\O_\mu\setminus\O_\mu^\e)\cap\{h_1\leq
y\leq H_{1,\mu}\}$. It is easy to see that $\t\psi_\e(X_0)=0$, which
contradicts to our assumption $0<\t\psi_\e(X_0)<Q$.

{\bf Subcase 2.3.} $X_0\in(\O_\mu\setminus\O_\mu^\e)\cap\{y<h_1\}$.
Noticing the extension of $\t\psi_\e$, one has that
$0<\psi(X_0)=\t\psi_\e(X_0)=-\sqrt{2\ld-2gh_1}y_0+Q_1<Q.$ The
continuity of $\psi$ gives that there exists a small $r>0$, such
that $0<\psi<Q$ in $B_r(X_0)\subset\O_\mu$ and $y_0+r<h_1$. Due to
the lower bound $\psi\geq-\sqrt{2\ld-2gh_1}y+Q_1$ in \eqref{b80},
the strong maximum principle implies that
$\psi(X_0)\equiv-\sqrt{2\ld-2gh_1}y_0+Q_1$ in $B_r(X_0)$. In view of
\eqref{b182}, by using the strong maximum principle again, one gets
that $\psi(X_0)\equiv-\sqrt{2\ld-2gh_1}y_0+Q_1$ in
$\O_\mu\cap\{y<h_1\}$, which leads to a contradiction to the
boundary value of $\psi$ on $\sigma_{2,\mu}$.

Since $\t\psi_\e<Q=\psi$ on $N_{2,\mu}$, it follows from
\eqref{b182} and \eqref{b189} that
 \be\label{b19} \
\psi(x,y)\geq\t\psi(x-\e,y)\ \quad\quad\text{in $\O_\mu$}.\ee

Similarly, one can obtain that \be\label{b20} \
\psi(x,y)\leq\t\psi(x+\e,y)\ \quad\quad\text{in $\O_\mu$}.\ee Taking
$\e\rightarrow 0$ in \eqref{b19} and \eqref{b20} implies that
$\psi=\t\psi$.

In particular, taking $\psi=\t\psi$ in \eqref{b19}, we conclude that
$\psi(x,y)$ is monotone increasing with respect to $x$.

\end{proof}

\subsection{The free boundaries of the minimizer}

Thanks to the monotonicity of $\psi_{\ld,Q_1,\mu}$ with respect to
$x$ in Proposition \ref{lb4}, there exist two functions
$k_{1,\ld,Q_1,\mu}(y)$ and $k_{2,\ld,Q_1,\mu}(y)$ such that
\be\label{b400}\ba{rl}&D_\mu\cap\{0<\psi_{\ld,Q_1,\mu}<Q\}\\
=&\{(x,y)\in D_\mu\mid k_{1,\ld,Q_1,\mu}(y)<x<k_{2,\ld,Q_1,\mu}(y)\
\ \text{for}\ \ 0<y<H\}.\ea\ee

 In order to establish the
continuity of free boundaries, we need the following non-oscillation
lemma, which implies that the free boundary cannot oscillate near
any free boundary point.

\begin{lemma}\label{lb5}(non-oscillation lemma) Suppose that there exist some $\alpha_1,\alpha_2$ with $\alpha_1<\alpha_2$ and a domain
 $E\subset D_\mu\cap\{0<\psi_{\ld,Q_1,\mu}<Q\}$, such that

(1) $E$ is bounded by two disjoint arcs $\gamma_1, \gamma_2$
($\gamma_1,\gamma_2\subset\Gamma_{2,\ld,Q_1,\mu})$, the lines
$\{x=\alpha_1\}$ and $\{x=\alpha_2\}$ (see Figure \ref{f6}). Denote
the endpoints of $\gamma_i$ as $(\alpha_1,\beta_i)$ and
$(\alpha_2,\eta_i)$ for $i=1,2$.

(2) dist$(E,\overline{A_1A_2})>c_0$ for some $c_0>0$.

There exists a constant $C>0$, depending only on $\ld, H, c_0$ and
$Q$, such that
$$|\alpha_2-\alpha_1|\leq C\max\{|\beta_1-\beta_2|,|\eta_1-\eta_2|\}.$$
 A similar result holds for $\gamma_1,\gamma_2\subset
\Gamma_{1,\ld,Q_1,\mu}$.

\end{lemma}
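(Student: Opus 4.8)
The plan is to prove the non-oscillation estimate by comparing $\psi_{\ld,Q_1,\mu}$ against a suitable harmonic barrier on the domain $E$ and extracting a quantitative bound from the flux balance across the two vertical segments $\{x=\alpha_1\}$ and $\{x=\alpha_2\}$. The geometric picture is that $E$ is a thin ``tongue'' of fluid pinched between two free-boundary arcs on which the Bernoulli gradient condition $|\g\psi_{\ld,Q_1,\mu}|=\sqrt{2\ld-2gy}$ holds (Proposition \ref{lb1}(4)), so the width $|\alpha_2-\alpha_1|$ is controlled by how much the arcs $\gamma_1,\gamma_2$ can spread apart vertically, measured by $|\beta_1-\beta_2|$ and $|\eta_1-\eta_2|$. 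The hypothesis $\text{dist}(E,\overline{A_1A_2})>c_0$ is what keeps us away from the nozzle endpoints, so that the uniform Lipschitz and gradient bounds of Lemma \ref{lb60} and Proposition \ref{lb1} are available throughout $E$ with constants depending only on $c_0$ (and on $\ld,H,Q$).

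First I would set up the flux comparison. Since $\psi_{\ld,Q_1,\mu}$ is harmonic in $E\subset D_\mu\cap\{0<\psi_{\ld,Q_1,\mu}<Q\}$ and equals $Q$ on the arcs $\gamma_1,\gamma_2\subset\Gamma_{2,\ld,Q_1,\mu}$, I would integrate $\Delta\psi_{\ld,Q_1,\mu}=0$ over $E$ and apply the divergence theorem. The boundary contributions from $\gamma_1$ and $\gamma_2$ are governed by the normal derivative, which by the free-boundary condition is $|\g\psi_{\ld,Q_1,\mu}|=\sqrt{2\ld-2gy}$, a quantity bounded above and below by positive constants depending on $\ld,g,H$ on the region $\{0<y<H\}$. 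The contributions on the vertical segments are $\int \p_x\psi_{\ld,Q_1,\mu}\,dy$. Because $\psi_{\ld,Q_1,\mu}$ is monotone increasing in $x$ (Proposition \ref{lb4}) and Lipschitz with the uniform constant $C\sqrt{\ld}$ from Lemma \ref{lb60}, I can bound these horizontal fluxes from above by $C\sqrt{\ld}\,|\beta_1-\beta_2|$ on the left segment and $C\sqrt{\ld}\,|\eta_1-\eta_2|$ on the right. Comparing the two sides of the flux identity forces the total arc-length (hence essentially the horizontal extent) of the tongue to be comparable to $\max\{|\beta_1-\beta_2|,|\eta_1-\eta_2|\}$.

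The cleanest route to turn this into the stated width bound is a comparison-function argument in the spirit of Section 5 of \cite{ACF1}. I would introduce the harmonic function $w$ on $E$ with $w=0$ on $\gamma_1\cup\gamma_2$ and $w=Q-\psi_{\ld,Q_1,\mu}$ on the two vertical segments, so that $Q-\psi_{\ld,Q_1,\mu}$ is a nonnegative harmonic function in $E$ vanishing on the free-boundary arcs where its normal derivative equals $\sqrt{2\ld-2gy}\ge c(\ld,g,H)>0$. Applying Hopf's lemma and the boundary Harnack/interior gradient estimates at points of $\gamma_1,\gamma_2$, a positive lower bound on this normal derivative forces $Q-\psi_{\ld,Q_1,\mu}$ to grow at a definite linear rate into $E$; but the same function is bounded above on the vertical cross-sections by $C\sqrt{\ld}$ times the vertical spread of the arcs there. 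Balancing the mandatory linear growth across the horizontal width $|\alpha_2-\alpha_1|$ against the available ``room'' $\max\{|\beta_1-\beta_2|,|\eta_1-\eta_2|\}$ at the two ends yields the inequality $|\alpha_2-\alpha_1|\le C\max\{|\beta_1-\beta_2|,|\eta_1-\eta_2|\}$ with $C=C(\ld,H,c_0,Q)$.

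\emph{The main obstacle} will be making the barrier argument uniform up to the free boundary: the arcs $\gamma_1,\gamma_2$ are only known to be analytic (Proposition \ref{lb1}(3)) but with no a priori curvature or separation control, so the constant in Hopf's lemma could in principle degenerate as the two arcs approach each other. The role of the distance condition $\text{dist}(E,\overline{A_1A_2})>c_0$ is precisely to rule out the endpoint singularities and to guarantee, via the uniform nondegeneracy of minimizers of the Alt--Caffarelli functional, that the free boundary satisfies an interior sphere condition with radius bounded below by a constant depending only on $c_0,\ld,Q$. I would lean on the uniform nondegeneracy and the linear growth estimates near the free boundary from \cite{AC1,ACF1} to fix this constant, after which the comparison and flux estimates combine to give the claimed bound. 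The case $\gamma_1,\gamma_2\subset\Gamma_{1,\ld,Q_1,\mu}$ is handled identically by replacing $Q-\psi_{\ld,Q_1,\mu}$ with $\psi_{\ld,Q_1,\mu}$ itself.
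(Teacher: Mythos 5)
Your second paragraph is exactly the paper's proof: integrate $\Delta\psi_{\ld,Q_1,\mu}=0$ over $E$, use $\f{\p\psi_{\ld,Q_1,\mu}}{\p\nu}=\sqrt{2\ld-2gy}\geq\sqrt{2\ld-2gH}>0$ on $\gamma_1\cup\gamma_2$ (each arc having length at least $\alpha_2-\alpha_1$, since its endpoints lie on $\{x=\alpha_1\}$ and $\{x=\alpha_2\}$), and bound the fluxes through the two vertical segments by the Lipschitz constant of Lemma \ref{lb60} times $\max\{|\beta_1-\beta_2|,|\eta_1-\eta_2|\}$; this alone yields $|\alpha_2-\alpha_1|\leq C\max\{|\beta_1-\beta_2|,|\eta_1-\eta_2|\}$. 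Your third and fourth paragraphs are therefore superfluous: the barrier/Hopf/boundary-Harnack construction, and the accompanying worry that the Hopf constant could degenerate as the two arcs approach each other, never enter, because the divergence-theorem identity closes the argument by itself. The paper's proof is precisely the short flux comparison you describe, so you should simply stop there.
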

\begin{figure}[!h]
\includegraphics[width=100mm]{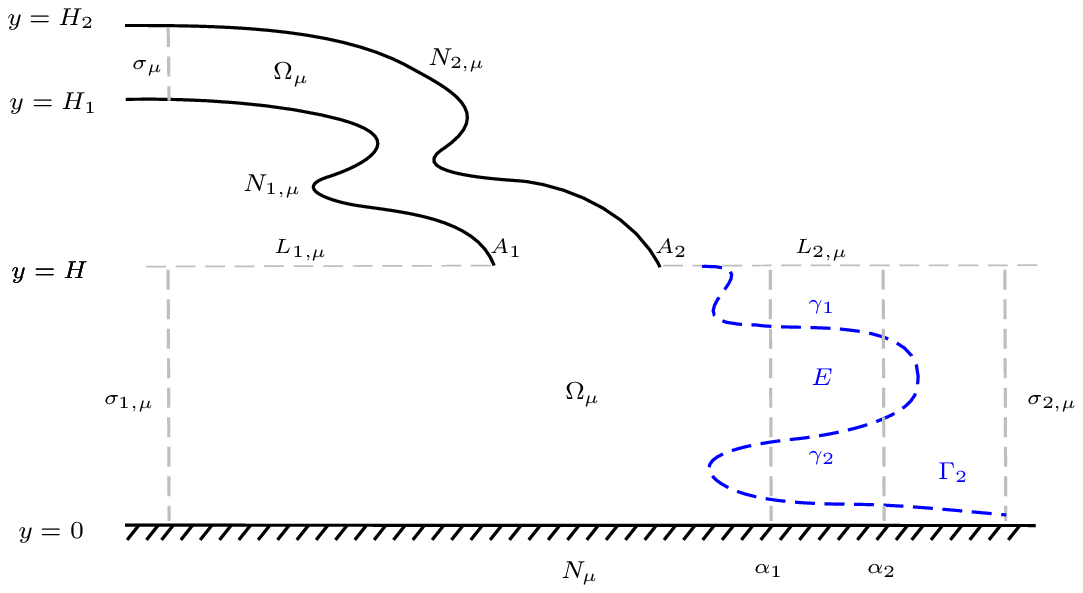}
\caption{The domain $E$}\label{f6}
\end{figure}

\begin{proof}Denote $h=\max\left\{|\beta_1-\beta_2|,|\eta_1-\eta_2|\right\}$ and $\psi=\psi_{\ld,Q_1,\mu}$. Since $\psi$ is harmonic in $E$,
it is easy to check that $\int_{\p E}\f{\p\psi}{\p\nu}dS=0$, which
yields that
$$\int_{\gamma_1\cup\gamma_2}\sqrt{2\ld-2gy} dS=\int_{\gamma_1\cup\gamma_2}\f{\p\psi}{\p\nu}dS=-\int_{\p
E\cap\{x=\alpha_2\}}\f{\p \psi} {\p x}dy+\int_{\p
E\cap\{x=\alpha_1\}}\f{\p \psi} {\p x}dy,$$ where $\gamma_1$ and
$\gamma_2$ are two arcs of the right free boundary.

By virtue of the Lipschitz continuity of $\psi$, one has
\be\label{b21}\ba{rl}-\int_{\p E\cap\{x=\alpha_2\}}\f{\p \psi} {\p
x}dy+\int_{\p E\cap\{x=\alpha_1\}}\f{\p \psi} {\p x}dy \leq
Ch.\ea\ee On the other hand, one gets
$$\int_{\gamma_1\cup\gamma_2}\sqrt{2\ld-2gy} dS\geq 2\sqrt{2\ld-2gH}(\alpha_2-\alpha_1),$$
which together with \eqref{b21} gives that
$$\alpha_2-\alpha_1\leq Ch.$$

\end{proof}
\begin{remark}\label{rl4}
The non-oscillation Lemma \ref{lb5} remains true if one of the arcs,
$\gamma_1$ for example, is a line segment (see Figure \ref{f8}), and
\be\label{b22}\f{\p\psi_{\ld,Q_1,\mu}}{\p\nu}\geq 0 \ \ \text{on}\ \
\gamma_1.\ee

\end{remark}

\begin{figure}[!h]
\includegraphics[width=100mm]{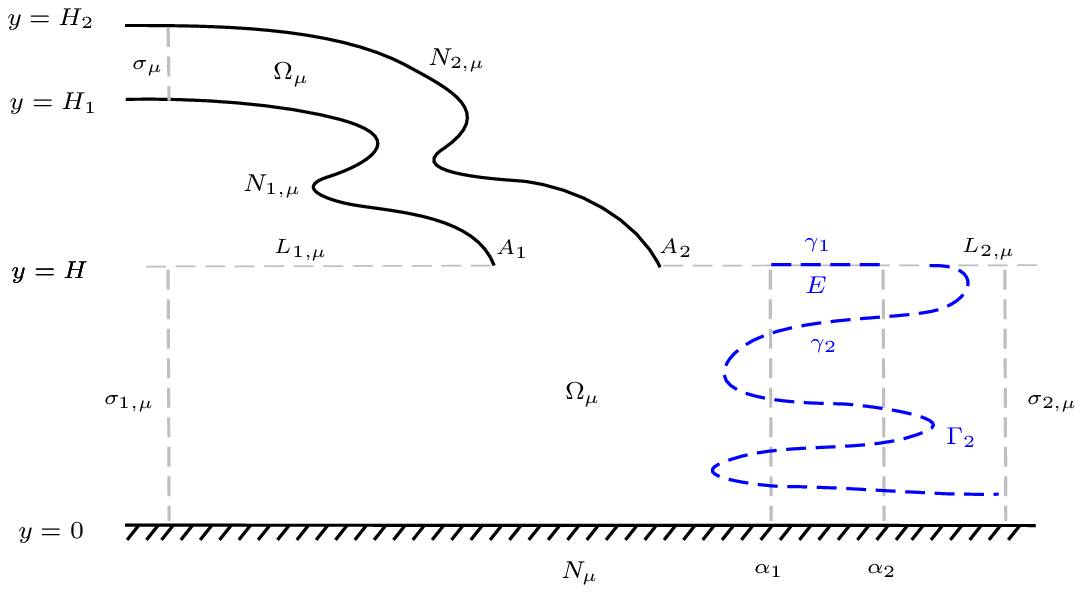}
\caption{The domain $E$}\label{f8}
\end{figure}

 With the aid of the non-oscillation Lemma \ref{lb5}, we can show the continuity of $k_{1,\ld,Q_1,\mu}(y)$ and $k_{2,\ld,Q_1,\mu}(y)$.

\begin{lemma}\label{lb6}$k_{1,\ld,Q_1,\mu}(y)$ is continuous in $(h_1,H)$ and $k_{2,\ld,Q_1,\mu}(y)$ is continuous in $(h_2,H)$.
Furthermore, $k_{i,\ld,Q_1,\mu}(H)=\lim_{y\rightarrow
H^-}k_{i,\ld,Q_1,\mu}(y)$ exists for $i=1,2$.
\end{lemma}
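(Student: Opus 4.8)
The plan is to upgrade the one-sided semicontinuity of the free-boundary graphs, which is immediate from the monotonicity in Proposition \ref{lb4}, to full continuity by means of the non-oscillation Lemma \ref{lb5}. I will argue for $k_2 := k_{2,\ld,Q_1,\mu}$ on $(h_2,H)$; the statement for $k_1$ follows by the reflection $x\mapsto -x$. Writing $\psi=\psi_{\ld,Q_1,\mu}$, monotonicity in $x$ together with $\psi\equiv Q$ to the right of $\Gamma_{2,\ld,Q_1,\mu}$ yields the identity $\{y:k_2(y)>c\}=\{y:\psi(c,y)<Q\}$; since $\{\psi<Q\}$ is open, $k_2$ is lower semicontinuous, and it remains only to prove $\limsup_{y\to y_0}k_2(y)\le k_2(y_0)$ for every $y_0\in(h_2,H)$.

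Suppose this fails at some $y_0$, and set $b=k_2(y_0)$ and $L=\limsup_{y\to y_0}k_2(y)>b$. Fix $\alpha_1,\alpha_2$ with $b<\alpha_1<\alpha_2<L$. At height $y_0$ the whole strip $\{\alpha_1\le x\le\alpha_2\}$ lies in the coincidence set $\{\psi=Q\}$, while there are heights arbitrarily close to $y_0$ at which $k_2>\alpha_2$, so the graph $x=k_2(y)$ must cross the vertical strip $\{\alpha_1<x<\alpha_2\}$. Following the graph on the two sides of such a crossing produces two arcs of $\Gamma_{2,\ld,Q_1,\mu}$ joining the line $x=\alpha_1$ to the line $x=\alpha_2$ and bounding a subregion $E\subset\{0<\psi<Q\}$; by Remark \ref{rb2} this $E$ lies in $\{h_2<y<H\}$ and, since $y_0<H$, stays at distance at least $(H-y_0)/2$ from $\overline{A_1A_2}$ for $y$ near $y_0$, so hypotheses (1)--(2) of Lemma \ref{lb5} hold (when only one genuine arc is available I would use the line-segment variant of Remark \ref{rl4}). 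The lemma gives $\alpha_2-\alpha_1\le C\max\{|\beta_1-\beta_2|,|\eta_1-\eta_2|\}$, where the right-hand side is the vertical spread of the two crossings on $x=\alpha_1$ and $x=\alpha_2$; choosing the crossings ever closer to $y_0$ forces this maximum to $0$ while $\alpha_2-\alpha_1$ is fixed, a contradiction. Hence $k_2$ is also upper semicontinuous, and therefore continuous on $(h_2,H)$.

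For the existence of $k_i(H)=\lim_{y\to H^-}k_i(y)$ I would rule out oscillation as $y\uparrow H$ by the same mechanism. By Lemma \ref{lb3} and Remark \ref{rb2} the graph $x=k_2(y)$ is bounded and confined to $\{h_2<y<H\}$, so if $\liminf_{y\to H^-}k_2<\limsup_{y\to H^-}k_2$ one could again extract infinitely many arcs of $\Gamma_{2,\ld,Q_1,\mu}$ crossing a fixed vertical strip with vanishing vertical spread and contradict Lemma \ref{lb5}; thus the one-sided limit exists.

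I expect the main obstacle to be the geometric bookkeeping in the two constructions of $E$: turning the purely analytic statement that the limsup exceeds the value (or that the one-sided limits disagree) into two boundary arcs that genuinely span the strip $\{\alpha_1<x<\alpha_2\}$ and enclose a region of $\{0<\psi<Q\}$, and in the degenerate case correctly identifying the flat piece on which $\p\psi/\p\nu\ge0$ so that Remark \ref{rl4} applies. A secondary delicate point is the endpoint analysis near $A_2$: as $y\uparrow H$ the oscillating arcs may approach $A_2$, where the distance hypothesis (2) of Lemma \ref{lb5} degenerates, so one must first check that the arcs (equivalently the values $k_2(y)$) stay bounded away from $x=1$, after which the non-oscillation bound applies and pins down the limit.
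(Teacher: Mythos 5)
Your lower-semicontinuity observation and your treatment of the oscillation case are sound, and the latter is essentially the paper's own first step: interlacing heights on which $\psi\equiv Q$, respectively $\psi<Q$, across a fixed vertical strip produce regions $E_n$ bounded by two free-boundary arcs with vanishing vertical spread, contradicting Lemma \ref{lb5}. The genuine gap is the remaining case of a \emph{clean jump}, which is consistent both with lower semicontinuity and with the absence of oscillation: $k_2(y)\to L$ as $y\uparrow y_0$, $k_2(y_0)=b<L$, and $k_2(y)\to b$ as $y\downarrow y_0$. In that case, for $b<\alpha_1<\alpha_2<L$, the only piece of $\Gamma_{2,\ld,Q_1,\mu}$ meeting the strip $\{\alpha_1<x<\alpha_2\}$ near height $y_0$ is the horizontal segment $\{(x,y_0)\mid \alpha_1\le x\le\alpha_2\}$ itself: for $y<y_0$ the graph stays to the right of $x=\alpha_2$, for $y>y_0$ to the left of $x=\alpha_1$. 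So there is no second free-boundary arc, and your fallback via Remark \ref{rl4} also fails: the natural substitute arc is a fluid segment $\{y=y_0-\delta\}$ just below the flat piece, on which the outer normal to $E$ is $(0,-1)$ and hence $\f{\p\psi}{\p\nu}=-\f{\p\psi}{\p y}\approx-\sqrt{2\ld-2gy_0}<0$, violating \eqref{b22}. Indeed no local flux-balance argument can close this case, because $\psi=Q+\sqrt{2\ld-2gy_0}\,(y-y_0)$ is an exact local solution (harmonic, with $\psi=Q$ and $|\g\psi|=\sqrt{2\ld-2gy_0}$ on the flat piece, by Proposition \ref{lb1}): a flat horizontal free boundary is locally perfectly admissible, so the contradiction must come from global information.

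This is precisely the second, independent step in the paper's proof that your proposal omits: if $k_2(y_0-0)>k_2(y_0)$, then a horizontal segment $\gamma_0$ at height $y_0$ is free boundary, so $\psi=Q$ and $\f{\p\psi}{\p y}=\sqrt{2\ld-2gy_0}$ on $\gamma_0$; the Cauchy--Kovalevskaya theorem together with unique continuation for harmonic functions then forces $\psi\equiv\sqrt{2\ld-2gy_0}(y-y_0)+Q$ on the entire slab $\{y_0-\e<y<y_0\}\cap\O_\mu$, which is incompatible with the boundary data of $\psi$ on $\p\O_\mu$. Without this rigidity argument (or an equivalent global one) jump discontinuities of $k_2$ are not excluded; with it added, your structure (lsc from monotonicity, non-oscillation for the oscillatory case, and oscillation exclusion alone for the existence of $\lim_{y\to H^-}k_2(y)$, where no jump comparison is needed) does yield the lemma.
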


\begin{proof} Denote $\psi=\psi_{\ld,Q_1,\mu}$ and
$k_i(y)=k_{i,\ld,Q_1,\mu}(y)$ ($i=1,2$) for simplicity.
 We consider only  the
continuity of $k_2(y)$ in the following, and the continuity of
$k_1(y)$ can be obtained by similar arguments.

As mentioned in Remark \ref{rb2}, the free boundary
$\Gamma_{2,\ld,Q_1,\mu}$ lies above $y=h_2$. First, we will show
that the one side limit of $k_2(y)$ exists as $y\rightarrow y_0^-$
or $y\rightarrow y_0^+$ for any $y_0\in(h_2,H)$.

Suppose that there exists a $y_0\in(h_2,H)$, such that there are two
limits $\alpha_1$ and $\alpha_2$ with $\alpha_1<\alpha_2$ as
$y\rightarrow y^-_0$, that is, the free boundary
$\Gamma_{2,\ld,Q_1,\mu}$ oscillates as $y\rightarrow y_0^-$.

The monotonicity $\psi(x,y)$ with respect to $x$ implies that there
exist two sequences $\{y_n\}_{n=1}^\infty$ and $\{\t
y_n\}_{n=1}^\infty$ with $y_n<\t y_n<y_{n+1}$ for $n\geq1$, such
that $y_n\rightarrow y^-_0$, $\t y_n\rightarrow y^-_0$ and
\be\label{b23}\psi(x,y_n)=Q\quad\text{and}\quad\psi(x,\t y_n)<Q, \ee
for $\delta_1<x<\delta_2$, where
$\delta_1=\f{3\alpha_1+\alpha_2}{4}$ and
$\delta_2=\f{\alpha_1+3\alpha_2}{4}$.

Let $E_n\subset \O_{\mu}\cap\{\psi<Q\}$ be a domain, bounded by the
arcs $x=\delta_1,x=\delta_2, y=\t\gamma_n(x)\ \text{and}\
y=\gamma_n(x)$, where $(x,\gamma_n(x))$ and $(x,\t\gamma_n(x))$ are
free boundary points of $\Gamma_{2,\ld,Q_1,\mu}$ with
$\gamma_n(x)<\t\gamma_n(x)$. The existence of the domain $E_n$
follows from \eqref{b23}. It is easy to check that
$$\e_n=\sup_{\delta_1<x<\delta_2} \{\t\gamma_n(x)-\gamma_n(x)\}\rightarrow 0 \ \ \text{as}\ \ n\rightarrow\infty.$$
 The
Lipschitz continuity of $\psi$ gives that $$\psi>0\ \ \text{in $E_n$
for sufficiently large $n$}.$$

Therefore, by the non-oscillation Lemma \ref{lb5} in $E_n$ for
sufficiently large $n$, one has
$$0<\f{\alpha_2-\alpha_1}2=\delta_2-\delta_1\leq C
 \e_n\ \ \ \text{for
sufficiently large $n$},$$
 which leads to a contradiction.

 Thus, $\lim_{y\rightarrow
 y_0^-}k_2(y)$ exists for any $y_0\in(h_2,H)$. Similar arguments yield the existence of $\lim_{y\rightarrow
H^-}k_2(y)$, denoted by $k_2(H)$, and the existence of
$\lim_{y\rightarrow y_0^+}k_2(y)$ for any $y_0\in(h_2,H)$.

Finally, we will show that $k_2(y)$ is a continuous function in
$(h_2,H)$. Denote
$$k_2(y_0+0)=\lim_{y\rightarrow y_0^+}k_2(y)\ \ \text{and}\
k_2(y_0-0)=\lim_{y\rightarrow y_0^-}k_2(y),$$ and it suffices to
show that
$$k_2(y_0+0)=k_2(y_0-0)=k_2(y_0)\ \ \text{for any}\ \ y_0\in(h_2,H).$$

Suppose that there exists a $y_0\in(h_2,H)$, such that
$k_2(y_0-0)\neq k_2(y_0)$, and without loss of generality we assume
$k_2(y_0-0)>k_2(y_0)$. By virtue of the monotonicity of $\psi$ with
respect to $x$, one has that $\gamma_0=\left\{(x,y_0)\mid
x_1<x<x_2\right\}$ is a part of the free boundary
$\Gamma_{2,\ld,Q_1,\mu}$, where
$$x_1=\f{k_2(y_0-0)+3k_2(y_0)}{4}\quad\quad\text{ and}\quad\quad
x_2=\f{3k_2(y_0-0)+k_2(y_0)}{4}.$$ It follows from Proposition
\ref{lb1} that
$$\f{\p\psi(x,y_0-0)}{\p y}=\sqrt{2\ld-2gy_0} \ \ \text{and}\ \ \psi=Q\ \ \text{on}\ \
\gamma_0.$$

The continuity of $\psi$ implies that there exists a small $\e>0$
such that
$$0<\psi<Q\ \ \text{in $\mathcal{E}_{\e}$},$$ where
$\mathcal{E}_{\e}=\left\{(x,y)\mid x_1<x<x_2,y_0-\e<y<y_0\right\}$.

It follows from the Cauchy-Kovalevskaya theorem and the unique
continuation that $$\psi(x,y)=\sqrt{2\ld-2gy_0} (y-y_0)+Q \ \
\text{in} \ \t {\mathcal{E}}_\e.$$
 where $\t {\mathcal{E}}_\e=\{-\infty<x<+\infty, y_0-\e<y<y_0\}\cap\O_{\mu}$, which leads to a contradiction to the boundary condition of $\psi$.

\end{proof}

With the aid of Lemma \ref{lb6}, the free boundary can be denoted as
$$\text{$\Gamma_{i,\ld,Q_1,\mu}=\{(x,y)\in D_{\mu}\mid x=k_{i,\ld,Q_1,\mu}(y)\ \ \text{for}\ \ h_i<y<H\}$, $i=1,2$.}$$

Up to now, one can conclude that for any $Q_1\in[0,Q]$ and
$\ld\geq\f{\max\{Q_1^2,(Q-Q_1)^2\}}{2H^2}+gH$, there exists a unique
minimizer $\psi_{\ld,Q_1,\mu}$ to the variational problem
$(P_{\ld,Q_1,\mu})$ with the smooth free boundaries
$\Gamma_{1,\ld,Q_1,\mu}$ and $\Gamma_{2,\ld,Q_1,\mu}$.

\subsection{Almost continuous fit conditions of the free boundaries}

Next, we will verify the continuous fit conditions between the rigid
boundaries and the free boundaries. Namely, there exists an
appropriate pair $(\bar\ld_\mu,\bar Q_{1,\mu})$ to guarantee the
continuous fit conditions $$k_{1,\bar\ld_\mu,\bar
Q_{1,\mu},\mu}(H)=-1\quad \text{and}\quad k_{2,\bar\ld_\mu,\bar
Q_{1,\mu},\mu}(H)=1\quad \text{for any}\quad \mu>1.$$ In this
subsection, we only give almost continuous fit conditions  (in
Proposition \ref{lb11}) for $\psi_{\ld,Q_1,\mu}$ in the truncated
domain $\O_\mu$, and the continuous fit conditions for the minimizer
in the whole fluid field will be verified in the next section.

This is one of key points and the essential difference to the
asymmetric jets with gravity in \cite{ACF2}. In \cite{ACF2}, Alt,
Caffarelli and Friedman showed that there does not exist a jet under
gravity from an asymmetric nozzle with infinite height, such that
the continuous fit conditions hold. In other words, in general, the
asymmetric free boundaries can not connect the endpoints of the
nozzle at the same time. Here, a new observation is that assuming
the asymmetric nozzle is of finite height, we can obtain the
existence result on jets under gravity with continuous fit
conditions.

The desired result will be shown by the following facts.

{\bf Fact 1.} The minimizer $\psi_{\ld,Q_1,\mu}$ and the free
boundaries $x=k_{i,\ld,Q_1,\mu}(y)$ depend continuously on $\ld$ and
$Q_1$.

{\bf Fact 2.} For any $Q_1,Q_1'\in [0,Q]$ with $Q_1>Q_1'$, it holds
that
$$\psi_{\ld,Q_1,\mu}(x,y)\geq\psi_{\ld,Q_1',\mu}(x,y)\ \ \text{for any}\ \ (x,y)\in\O_\mu.$$
This fact indeed implies that the free boundary
$x=k_{i,\ld,Q_1,\mu}(y)$ is decreasing with respect to $Q_1$,
$i=1,2$.

{\bf Fact 3.} For $Q_1=\f{Q}{2}$, there exists some
$\ld>\f{Q^2}{8H^2}+gH$,
 such that
\be\label{b240}\text{$k_{1,\ld,Q_1,\mu}(H)<-1$ and
$k_{2,\ld,Q_1,\mu}(H)>1$.}\ee

This fact implies that the following set is non-empty:
$$\Sigma_{\mu}=\{\ld\mid\ \text{there exists a $Q_1\in(0,Q)$, such that \eqref{b240} holds}\}.$$

{\bf Fact 4.} The set $\Sigma_\mu$ is uniformly bounded for any
$\mu>1$.

{\bf Fact 5.} Define $\bar\ld_\mu=\sup_{\ld\in\Sigma_\mu}\ld$. There
exists a $\bar Q_{1,\mu}\in[0,Q]$, such that the free boundaries
$\Gamma_{1,\ld,Q_1,\mu}$ and $\Gamma_{2,\ld,Q_1,\mu}$ satisfy the
almost continuous fit conditions (in Proposition \ref{lb11}).

First, we show that the minimizer and the free boundaries depend
continuously on $\ld$ and $Q_1$, and complete the proof of Fact 1.

\begin{lemma}\label{lb7} For any sequences $\{\ld_n\}$ and $\{Q_{1,n}\}$ with
$$\text{$Q_{1,n}\in[0,Q]$ and $\ld_n\geq\f{\max\{Q_{1,n}^2,(Q-Q_{1,n})^2\}}{2H^2}+gH$},$$ if $\ld_n\rightarrow \ld$ and $Q_{1,n}\rightarrow Q_1$, then
 $$\psi_{\ld_n,Q_{1,n},\mu}\rightarrow\psi_{\ld,Q_1,\mu}\ \ \text{uniformly in any compact subsect of
 $\O_\mu$},$$ and
 $$k_{1,\ld_n,Q_{1,n},\mu}(H)\rightarrow k_{1,\ld,Q_1,\mu}(H)\ \ \text{and}\ \ k_{2,\ld_n,Q_{1,n},\mu}(H)\rightarrow k_{2,\ld,Q_1,\mu}(H).$$
%$$k_{1,\ld_n,Q_{1,n},\mu}(y)\rightarrow k_{1,\ld,Q_1,\mu}(y)\ \ \text{for
%any}\ \ y\in (h_1,H],$$ and
%$$k_{2,\ld_n,Q_{1,n},\mu}(y)\rightarrow k_{2,\ld,Q_1,\mu}(y)\ \ \text{for
%any}\ \ y\in (h_2,H].$$
\end{lemma}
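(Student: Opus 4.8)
The plan is to combine the uniform gradient bound with the weak lower semicontinuity of the functional and the uniqueness of the minimizer, and then read off the fit points from the level sets. First I would fix a compact $K\Subset\O_\mu$ and invoke Lemma \ref{lb60}: since $\ld_n\to\ld$, the Lipschitz bound $C(\sqrt{\ld_n}+Q)$ is uniform in $n$ away from the orifice endpoints $A_1,A_2$, so by the Arzel\`a--Ascoli theorem a subsequence of $\psi_{\ld_n,Q_{1,n},\mu}$ converges uniformly on compact subsets of $\O_\mu$ to some $\psi_*\in C^{0,1}_{loc}$. Because the boundary function $\Psi_{\ld,Q_1,\mu}$ depends continuously on $(\ld,Q_1)$ (its only parameter--dependent pieces are the values on $N_\mu,\sigma_{1,\mu},\sigma_{2,\mu}$, and by Proposition \ref{lb0} together with Remark \ref{rb1} the heights $h_{i,n}$ converge to $h_i$), the limit $\psi_*$ inherits the boundary data of $\Psi_{\ld,Q_1,\mu}$, hence $\psi_*\in K_{\ld,Q_1,\mu}$.

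Next I would show $\psi_*$ minimizes $J_{\ld,\mu}$. For the lower bound the Dirichlet part is weakly lower semicontinuous, while for the volume term I would use that $\{0<\psi_*<Q\}$ is open: at each of its points uniform convergence gives $0<\psi_{\ld_n,Q_{1,n},\mu}<Q$ for large $n$, and since $2\ld_n-2gy\to 2\ld-2gy>0$ on $D_\mu$, Fatou's lemma yields $\int_{D_\mu}(2\ld-2gy)\chi_{\{0<\psi_*<Q\}}\le\liminf_n\int_{D_\mu}(2\ld_n-2gy)\chi_{\{0<\psi_{\ld_n,Q_{1,n},\mu}<Q\}}$, so that $J_{\ld,\mu}(\psi_*)\le\liminf_n J_{\ld_n,\mu}(\psi_{\ld_n,Q_{1,n},\mu})$. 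For the matching upper bound I would take the genuine minimizer $\bar\psi:=\psi_{\ld,Q_1,\mu}$ and build competitors $\bar\psi_n\in K_{\ld_n,Q_{1,n},\mu}$ by adding to $\bar\psi$ an extension of the uniformly small boundary difference $\Psi_{\ld_n,Q_{1,n},\mu}-\Psi_{\ld,Q_1,\mu}$, supported in a thin layer along $N_\mu\cup\sigma_{1,\mu}\cup\sigma_{2,\mu}$; with a suitable choice of layer width this extension is small in $H^1$ and supported in a set of vanishing measure, so that $\int|\g\bar\psi_n|^2\to\int|\g\bar\psi|^2$ and the symmetric difference of $\{0<\bar\psi_n<Q\}$ and $\{0<\bar\psi<Q\}$ has measure $o(1)$, giving $J_{\ld_n,\mu}(\bar\psi_n)\to J_{\ld,\mu}(\bar\psi)$. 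Since $J_{\ld_n,\mu}(\psi_{\ld_n,Q_{1,n},\mu})\le J_{\ld_n,\mu}(\bar\psi_n)$, combining with the lower bound forces $J_{\ld,\mu}(\psi_*)\le J_{\ld,\mu}(\bar\psi)$, so $\psi_*$ is a minimizer. By the uniqueness in Proposition \ref{lb4}, $\psi_*=\psi_{\ld,Q_1,\mu}$, and since every subsequence has a further subsequence with this same limit, the whole sequence $\psi_{\ld_n,Q_{1,n},\mu}\to\psi_{\ld,Q_1,\mu}$ uniformly on compacta.

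For the fit points I would exploit the $x$--monotonicity from Proposition \ref{lb4}, which lets me write $k_{2,\ld_n,Q_{1,n},\mu}(y)=\sup\{x\mid\psi_{\ld_n,Q_{1,n},\mu}(x,y)<Q\}$ for each fixed $y\in(h_2,H)$, and similarly for $k_1$. The inclusion $\liminf_n k_{2,\ld_n,Q_{1,n},\mu}(y)\ge k_{2,\ld,Q_1,\mu}(y)$ is immediate from uniform convergence; the reverse uses the Alt--Caffarelli non-degeneracy near the free boundary (the identity $|\g\psi_{\ld_n,Q_{1,n},\mu}|=\sqrt{2\ld_n-2gy}$ from Proposition \ref{lb1}), which bounds $Q-\psi_{\ld_n,Q_{1,n},\mu}$ below linearly in the distance to $\Gamma_{2,\ld_n,Q_{1,n},\mu}$ and rules out a limiting free boundary strictly to the right of $k_{2,\ld,Q_1,\mu}(y)$. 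This yields $k_{i,\ld_n,Q_{1,n},\mu}(y)\to k_{i,\ld,Q_1,\mu}(y)$ for every $y\in(h_i,H)$.

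The hard part is upgrading this interior convergence to convergence of the fit values $k_{i,\ld_n,Q_{1,n},\mu}(H)$, precisely because the estimate of Lemma \ref{lb60} degenerates at the endpoints $A_1,A_2$ where the fit points live. I would close this gap by producing a modulus of continuity for the free boundaries near $y=H$ that is uniform in $n$: applying the non-oscillation Lemma \ref{lb5}, together with Remark \ref{rl4}, uniformly in $n$ on strips $\{y_0<y<H\}$ controls the oscillation of $k_{i,\ld_n,Q_{1,n},\mu}$ there, which (with the existence of the one--sided limit from Lemma \ref{lb6}) permits interchanging the limits $n\to\infty$ and $y\to H^-$ and delivers $k_{i,\ld_n,Q_{1,n},\mu}(H)\to k_{i,\ld,Q_1,\mu}(H)$. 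Securing this uniform boundary regularity up to $y=H$ near the orifice endpoints is where the real work lies; the remaining statements are routine consequences of the variational convergence above.
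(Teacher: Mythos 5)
Your first half --- the locally uniform convergence of the minimizers --- is essentially sound, and it is in fact a legitimate alternative to the paper's Steps 1--4: where the paper first proves Hausdorff convergence of the free boundaries, $L^1$ convergence of the characteristic functions and a.e.\ convergence of the gradients before comparing with the competitor $\psi+(1-\eta)(\psi_n-\o)$, you use weak lower semicontinuity of the functional plus a boundary-layer recovery competitor, and then invoke the uniqueness from Proposition \ref{lb4}; both routes work. The genuine gap is in the second half, which is the actual content of Lemma \ref{lb7}: the convergence of the fit values $k_{i,\ld_n,Q_{1,n},\mu}(H)$. There you propose to extract a uniform-in-$n$ modulus of continuity of the free boundaries near $y=H$ from the non-oscillation Lemma \ref{lb5} and Remark \ref{rl4}, and you yourself concede that ``securing this uniform boundary regularity near the orifice endpoints is where the real work lies.'' That work is exactly what is missing, and the tools you name cannot supply it.

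Concretely, write $\psi_n=\psi_{\ld_n,Q_{1,n},\mu}$ and $k_{2,n}=k_{2,\ld_n,Q_{1,n},\mu}$, and suppose $k_{2,n}(H)\to k_{2,\ld,Q_1,\mu}(H)+\delta$ with $\delta<0$ (Case 1 of the paper's Step 5). Then for $y_0$ close to $H$ and $n$ large, the graph $x=k_{2,n}(y)$ sweeps from $\approx k_{2,\ld,Q_1,\mu}(y_0)\approx k_{2,\ld,Q_1,\mu}(H)$ at height $y_0$ to $\approx k_{2,\ld,Q_1,\mu}(H)+\delta$ at height $H$: a single, essentially monotone horizontal excursion of fixed length $|\delta|$ inside a vertical window of arbitrarily small height. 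This is precisely a failure of equicontinuity, yet it is invisible to Lemma \ref{lb5} and Remark \ref{rl4}: those require a thin region $E\subset D_\mu\cap\{0<\psi_n<Q\}$ bounded by arcs crossing the same vertical strip (two free-boundary arcs, or one free-boundary arc plus a solid segment on which $\f{\p\psi_n}{\p\nu}\geq0$). Here the thin region trapped between the excursion and the line $y=H$ lies in $\{\psi_n=Q\}$, not in the fluid, so the lemma does not apply to it; and the fluid region below the excursion is not thin, so the estimate $|\alpha_2-\alpha_1|\leq C\max\{|\beta_1-\beta_2|,|\eta_1-\eta_2|\}$ yields nothing --- a single excursion is not an oscillation. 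This is exactly why the paper handles Cases 1 and 2 by a different mechanism: uniform $C^{1,\alpha}$ bounds on the free boundaries of the rescaled functions $(Q-\psi_n)/\sqrt{2\ld_n}$ (Section 8 of \cite{AC1}), the free-boundary convergence lemma (Lemma 6.1, Chapter 3 of \cite{FA1}) to pass the Cauchy data $\psi=Q$ and $\f{\p\psi}{\p y}=\sqrt{2\ld-2gH}$ to the limiting horizontal segment, and then the Cauchy--Kovalevskaya theorem with unique continuation to force $\psi$ to be affine in a whole strip, contradicting its boundary values. The non-oscillation lemma enters the paper's Step 5 only in Case 3 ($\delta>0$ and $k_{2,\ld,Q_1,\mu}(H)\geq1$), where the geometry genuinely provides a thin fluid region capped above by a piece of $L_2$ with the flux condition supplied by Lemma 5.6(i) of \cite{ACF3}; your plan needs that geometry in every case, but it is available only there. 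Without an argument of this overdetermined-Cauchy-data type, the fit-point convergence --- and hence the lemma --- is not proved.
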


\begin{proof}  Denote $\psi_n=\psi_{\ld_n,Q_{1,n},\mu}$ and
$k_{i,n}(y)=k_{i,\ld_n,Q_{1,n},\mu}(y)$ ($i=1,2$) for simplicity.
Thanks to Lemma \ref{lb60}, one has
$$|\g\psi_n(X)|\leq C(\sqrt{\ld_n}+Q)\leq C(\sqrt{2\ld}+Q)\ \ \text{in any compact subset $K$ in $\O_\mu$},$$
where the constant $C$ depends only on $K$ and $\O_\mu$, provided
that $n$ is sufficiently large. Then there exists a subsequence
still labeled as $\{\psi_n\}$ such that
\be\label{b24}\psi_n\rightharpoonup \o\ \ \text{weakly in}\ \
H_{loc}^1(\R^2)\ \text{and}\ \psi_n\rightarrow \o \ \text{in} \
C^\alpha(\bar E),\ee for any compact subset $E$ of $\O_\mu$ and
$0<\alpha<1$.

{\bf Step 1.} $D_\mu\cap\p \{0<\psi_n<Q\}\rightarrow D_\mu\cap\p
\{0<\o<Q\}$ in the Hausdorff distance, where the Hausdorff distance
$d(E,F)$ between two sets $E$ and $F$ is defined as follows,
$$d(E,F)=\inf\left\{\e>0\mid E\subset\bigcup_{X\in F} B_\e(X) \ \ \text{and}\ \ F\subset\bigcup_{X\in E} B_\e(X)\right\}.$$

For any $X_0\in D_{\mu}$, if $X_0\notin D_\mu\cap\p\{0<\o<Q\}$, then
there exists a small $r$ with $0<r<\f12\text{dist}(X_0,\p\O_\mu)$,
such that $B_r(X_0)\Subset D_{\mu}$ and
$B_r(X_0)\cap\p\{0<\o<Q\}=\varnothing$. We next claim that
\be\label{b25}\text{$B_{\f
r{20}}(X_0)\cap\p\{0<\psi_n<Q\}=\varnothing$ for sufficiently large
$n$.}\ee In fact, it follows from \eqref{b24} that the claim
\eqref{b25} is true for $0<\o<Q$ in $B_r(X_0)$. On another hand, if
$\o\equiv Q$ in $B_r(X_0)$, for any small $\e>0$, the Lipschitz
continuity of $\psi_n$ in Lemma \ref{lb60} gives that there exists a
positive integer $N(\e)$, such that
$$\text{ $\psi_n>0$ and $|Q-\psi_n(X)|<\e$ in $B_{\f {3r}4}(X_0)$
for any $n>N(\e)$}.$$ Then we have
$$\f2r\fint_{\partial B_{\f r2}(X_0)}(Q-\psi_n)dS<\f{2\e}r\leq c^*\sqrt{2\ld_n-2gH} \ \text{for sufficiently large
$n$},$$ where the constant $c^*$ is same as in Lemma \ref{lf3}.
Therefore, Lemma \ref{lf3} implies that $\psi_n\equiv Q$ in
$B_{\f{r}{16}}(X_0)$ for sufficiently large $n$, and thus the claim
\eqref{b25} is true. Similarly, one can obtain \eqref{b25}, provided
that $\o\equiv0$ in $B_r(X_0)$.

Reversely, for any $X_0\in D_{\mu}$, if $X_0\notin
D_\mu\cap\p\{0<\psi_n<Q\}$ for sufficiently large $n$, then there
exists a small $r$ with $0<r<\f12\text{dist}(X_0,\p\O_\mu)$, such
that $B_r(X_0)\cap\p\{0<\psi_n<Q\}=\varnothing$. Next, we claim that
\be\label{b26}\text{$B_{\f
r4}(X_0)\cap\p\{0<\o<Q\}=\varnothing$}.\ee If $0<\psi_n<Q$ in
$B_r(X_0)$ for a sequence $\{\psi_n\}$, then
$$\Delta\psi_n= 0 \ \ \text{in}\ \ B_r(X_0),$$ which implies that $$\Delta\o= 0 \ \text{and} \ \ 0\leq\o\leq Q\ \text{in} \ B_{\f r2}(X_0).$$
The strong maximum principle yields that
$$\text{either}\ \ \o\equiv Q\ \ \  \text{or}\ \o\equiv 0\ \  \ \text{or}\ \ \ 0<\o<Q\ \text{ in}\ B_{\f r2}(X_0),$$
which gives the claim \eqref{b26}.

If $\psi_n\equiv Q$ or $\psi_n=0$ in $B_r(X_0)$ for a sequence
$\{\psi_n\}$, it is easy to see that the claim \eqref{b26} is true.

 Hence, we have shown the convergence of the free boundary in the Hausdorff distance.

{\bf Step 2.} $\chi_{\{0<\psi_n<Q\}}\rightarrow \chi_{\{0<\o<Q\}}$
in $L^1(D_{\mu})$.

For any $X_n\in K\Subset D_\mu$ with $X_n\in\p\{0<\psi_n<Q\}$, it
follows from the results in Step 1 that there exists a subsequence
$\{X_n\}$, such that $X_n\rightarrow X_0\in D_\mu\cap\p\{0<\o<Q\}$.

Since $\text{dist}(X_n,A_i)\geq\text{dist}(K,\p\O_\mu)$ for $i=1,2$,
the Lipschitz continuity of $\psi_n$ in Lemma \ref{lb60} implies
that there exists a small $r_0$ with $0<r_0<\f12\text{dist}(K,\p
D_\mu)$, such that \be\label{b27}\text{$\psi_n>0$ in $B_r(X_n)$ for
$X_n\in D_\mu\cap\p\{\psi_n<Q\}$},\ee and
\be\label{b28}\text{$\psi_n<Q$ in $B_r(X_n)$ for $X_n\in
D_\mu\cap\p\{\psi_n>0\}$},\ee for any $r\in(0,r_0)$. With the aid of
\eqref{b27} and \eqref{b28}, it follows from Lemma \ref{lf1} and
Lemma \ref{lf3} for $\psi_n$ that
\be\label{b29}c^*\sqrt{2\ld_n-2gH}\leq\f1r\fint_{\partial
B_r(X_n)}(Q-\psi_n)dS\leq C^*\sqrt{2\ld_n}\ \ \text{for}\ \ X_n\in
D_\mu\cap\p\{\psi_n<Q\},\ee and
\be\label{b30}c^*\sqrt{2\ld_n-2gH}\leq\f1r\fint_{\partial
B_r(X_n)}\psi_n dS\leq C^*\sqrt{2\ld_n}\ \ \text{for}\ \ X_n\in
D_\mu\cap\p\{\psi_n>0\}\ee for any $r\in(0,r_0)$, where the constant
$C^*$ and $c^*$ are uniform constants as in Lemma \ref{lf1} and
Lemma \ref{lf3}, respectively.

Then taking $n\rightarrow+\infty$ in \eqref{b29} and in \eqref{b30},
one has \be\label{b31}c^*\sqrt{2\ld-2gH}\leq\f1r\fint_{\partial
B_r(X_0)}(Q-\o)dS\leq C^*\sqrt{2\ld}\ \ \text{for}\ \ X_0\in
D_\mu\cap\p\{\o<Q\},\ee and
\be\label{b32}c^*\sqrt{2\ld-2gH}\leq\f1r\fint_{\partial B_r(X_0)}\o
dS\leq C^*\sqrt{2\ld}\ \ \text{for}\ \ X_0\in
D_\mu\cap\p\{\o>0\},\ee  for any $r\in(0,r_0)$. Those together with
Theorem 4.5 in \cite{AC1} imply that
$$\mathcal{H}^1(D_\mu\cap\p\{0<\o<Q\})<+\infty,$$ where
$\mathcal{H}^1$ is the one-dimensional Hausdorff measure on
$\mathbb{R}^2$. Consequently,
\be\label{b33}\mathcal{L}^2(D_\mu\cap\p\{0<\o<Q\})=0,\ee where
$\mathcal{L}^2$ is the two-dimensional Lebesgue measure on
$\mathbb{R}^2$.

Let $O^1_{\e_n}$ be an $\e_n$-neighborhood of $D_\mu\cap\p\{\o>0\}$
and $O^2_{\e_n}$ be an $\e_n$-neighborhood of $D_\mu\cap\p\{\o<Q\}$,
such that
$$D_\mu\cap\p\{\psi_n>0\}\subset O^1_{\e_n}\ \ \text{and}\ \ D_\mu\cap\p\{\psi_n<Q\}\subset O^2_{\e_n},$$ and \be\label{b34}\mathcal{L}^2(D_{\mu}\cap
O_{\e_n})\rightarrow 0\ \quad \text{as}\ \e_n\rightarrow 0,\ \ \
O_{\e_n}=O^1_{\e_n}\cup O^2_{\e_n},\ee due to $D_\mu\cap\p
\{0<\psi_n<Q\}\rightarrow D_\mu\cap\p \{0<\o<Q\}$ in the Hausdorff
distance in Step 1.

Hence, one has
\be\label{b35}\int_{D_{\mu}}\left|\chi_{\{0<\psi_n<Q\}}-\chi_{\{0<\o<Q\}}\right|dxdy\leq\int_{D_{\mu}\cap
O_{\e_n}}1dxdy=\mathcal{L}^2(D_{\mu}\cap O_{\e_n}), \ee  for
sufficiently large $n$, which together with \eqref{b34} gives that
$$\text{$\chi_{\{0<\psi_n<Q\}}\rightarrow \chi_{\{0<\o<Q\}}$ in
$L^1(D_{\mu})$.}$$

 {\bf Step 3.} $\nabla\psi_n\rightarrow \nabla\o$ a.e. in
 $\O_{\mu}$.

 Let $E$ be any compact subset of $\O_{\mu}\cap\{0<\o<Q\}$. It follows from the result in Step 1 that the minimizer
 $\psi_n$ solves Laplace equation in $E$ for sufficiently large $n$. Thanks to the standard elliptic estimates for $\psi_n$, one has
\be\label{b36}\nabla\psi_n\rightarrow\nabla\o\ \ \text{uniformly in
$E$}.\ee Next, we will show that
\be\label{b37}\nabla\psi_n\rightarrow\nabla\o\ \ \text{a.e. in
$\O_\mu\cap\{\o=Q\}$}.\ee

Since $\O_\mu\cap\{\o=Q\}$ is $\mathcal{L}^2$-measurable and
$\mathcal{L}^2(\O_\mu\cap\p\{\o<Q\})=0$, it follows from Corollary 3
in \cite{EV} that
$$\lim_{r\rightarrow 0}\f{\mathcal{L}^2(B_r(X)\cap\O_\mu\cap\{\o=Q\})}{\mathcal{L}^2(B_r(X))}=1\ \ \text{for $\mathcal{L}^2$ a.e. $X\in\O_\mu\cap\{\o=Q\}$}.$$
Denote $$\mathcal{S}=\left\{X\in\O_\mu\cap\{\o=Q\}\mid
\lim_{r\rightarrow
0}\f{\mathcal{L}^2(B_r(X)\cap\O_\mu\cap\{\o=Q\})}{\mathcal{L}^2(B_r(X))}=1\right\}.$$
Then we claim that
\be\label{b38}\text{$\f{Q-\o(X_0+X)}{|X|}\rightarrow0$ as
$|X|\rightarrow0$ for any $X_0\in\mathcal{S}$}.\ee In fact, suppose
that there exists an $X_0\in \mathcal{S}$, such that $Q-\o(Y)>kr$
for some $Y\in B_r(X_0)$ with $r\rightarrow 0$ and $k>0$. With the
aid of \eqref{b31} and \eqref{b32}, it follows from Theorem 4.3 and
Remark 4.4 in \cite{AC1} that $\o\in C^{0,1}(\O_\mu)$, which implies
that
$$Q-\o>\f k2 r\ \ \text{in $B_{\e kr}(Y)\subset B_{2r}(X_0)$ for some small $\e>0$},\ \ \ 0<r<\f14\text{dist}(X_0,\p\O_\mu).$$
This gives that $\O_\mu\cap\{\o<Q\}$ has positive density at $X_0$,
namely, $$\lim_{r\rightarrow
0}\f{\mathcal{L}^2(B_{2r}(X_0)\cap\O_\mu\cap\{\o<Q\})}{\mathcal{L}^2(B_{2r}(X_0))}>\f{\e^2
k^2}{4},$$ which contradicts to the fact $X_0\in\mathcal{S}$.

With the aid of \eqref{b24} and \eqref{b38}, for any $\e>0$, one has
$$\f{Q-\psi_n}{r}<\e\ \ \text{and}\ \ \psi_n>0 \ \ \text{in $B_r(X_0)$ for small $r$ with $0<r<\f14\text{dist}(X_0,\p\O_\mu)$},$$
provided that $n$ is sufficiently large, that is $n>N(\e,r)$. It
follows from the Lemma \ref{lf3} in the appendix that $\psi_n\equiv
Q$ in $B_{\f r8}(X_0)$, which implies that $\o\equiv Q$ in $B_{\f
r{16}}(X_0)$. Thus, $\mathcal{S}$ is open, and furthermore,
$$\text{$\psi_n\equiv\o$ in any compact subset of $\mathcal{S}$ for sufficiently
large $n$.}$$ This completes the proof of \eqref{b37}.

Similarly, one can show that
$$\nabla\psi_n\rightarrow\nabla\o\ \ \text{a.e.
$\O_\mu\cap\{\o=0\}$}.$$

Since $\mathcal{L}^2(\O_\mu\cap\p\{0<\o<Q\})=0$, it holds that
$\nabla\psi_n\rightarrow \nabla\o$ a.e. in $\O_{\mu}$.

{\bf Step 4.} $\o=\psi_{\ld,Q_1,\mu}$. Denote
$\psi=\psi_{\ld,Q_1,\mu}$ in the proof for the notational
simplicity.

First, we will check that $\o$ is a minimizer to the truncated
variational problem $(P_{\ld,Q_1,\mu})$. It follows from \eqref{b24}
that $\o\in K_{\ld,Q_1,\mu}$, and
$$J_{\ld,\mu}(\psi)\leq J_{\ld,\mu}(\o).$$
It suffices to show that \be\label{b39}J_{\ld,\mu}(\psi)\geq
J_{\ld,\mu}(\o).\ee For any $\eta\in C_0^{0,1}(\O_\mu)$ with
$0\leq\eta\leq1$, set
$$\phi_n=\psi+(1-\eta)(\psi_n-\o).$$ It is easy to check that $\phi_n\in K_{\ld_n,Q_{1,n},\mu}$ and

$$\chi_{\{0<\phi_n<Q\}\cap D_\mu}\leq \chi_{\{0<\psi<Q\}\cap D_\mu}+\chi_{\{\eta<1\}\cap D_\mu}.$$
Hence, \be\label{b40}\ba{rl}&\int_{\Omega_\mu}|\nabla
\psi_n|^2+(2\ld_n-2gy)\chi_{\{0<\psi_n<Q\}\cap D_\mu}
dxdy\\
\leq&\int_{\Omega_\mu}|\nabla
\phi_n|^2+(2\ld_n-2gy)\chi_{\{0<\phi_n<Q\}\cap D_\mu}
dxdy\\
\leq&\int_{\Omega_\mu}|\nabla
\phi_n|^2+(2\ld_n-2gy)\chi_{\{0<\psi<Q\}\cap D_\mu}
dxdy+2\ld_n\int_{D_\mu}\chi_{\{\eta<1\}} dxdy.\ea\ee

Since $\mathcal{L}^2(\O_\mu\cap\p\{0<\psi<Q\})=0$, by virtue of the
results in Step 2 and Step 3 and taking $n\rightarrow+\infty$ in
\eqref{b40}, one has \be\label{b41}\ba{rl}&\int_{\Omega_\mu}|\nabla
\o|^2+(2\ld-2gy)\chi_{\{0<\o<Q\}\cap D_\mu}
dxdy\\
\leq&\int_{\Omega_\mu}|\nabla
\psi|^2+(2\ld-2gy)\chi_{\{0<\psi<Q\}\cap D_\mu}
dxdy+2\ld\int_{D_\mu}\chi_{\{\eta<1\}} dxdy.\ea\ee Choosing
$\eta(X)=d_\e(X)=\min\left\{\f1\e dist(X,\mathbb{R}^2\setminus
D_\mu),1\right\}$ for $\e>0$ in \eqref{b41} and taking
$\e\rightarrow 0$ yield \eqref{b39}. Thanks to the uniqueness of the
minimizer to the truncated variational problem $(P_{\ld,Q_1,\mu})$,
one gets that $\o=\psi=\psi_{\ld,Q_1,\mu}$.

{\bf Step 5.} %Similar to the proof in Step 2, by virtue of the
%non-degeneracy Lemma \ref{lf4}, we have
%$$k_{1,n}(y)\rightarrow k_{1,\ld,Q_1,\mu}(y)\ \ \text{for
%any}\ \ y\in (h_1,H),$$ and
%$$k_{2,n}(y)\rightarrow k_{2,\ld,Q_1,\mu}(y)\ \ \text{for
%any}\ \ y\in (h_2,H).$$
In this step, we will show the convergence of the free boundaries at
the initial points, namely,  $k_{i,n}(H)\rightarrow
k_{i,\ld,Q_1,\mu}(H)$ as $n\rightarrow+\infty$ ($i=1,2$).

Suppose not, without loss of generality, we assume that there exist
two subsequences still labeled by $\{\ld_n\}$ and $\{Q_{1,n}\}$ such
that $$\text{$k_{2,n}(H)\rightarrow k_{2,\ld,Q_1,\mu}(H)+\delta$
with $\delta\neq 0.$}$$ Denote $\psi=\psi_{\ld,Q_1,\mu}$ and
$k_2(H)=k_{2,\ld,Q_1,\mu}(H)$ for simplicity. There are three cases
to be considered.

{\bf Case 1.} $\delta<0$. We consider two subcases in the following.

{\bf Subcase 1.1.} $k_{2}(H)+\delta\geq 1$. Denote $$I_0=
\left\{(x,H)\mid k_{2}(H)+\f{3\delta}4<x
 <k_{2}(H)+\f\delta4\right\}$$ and $$U_{\delta,\e}=\left\{(x,y)\mid k_{2}(H)+\f{3\delta}4<x
 <k_{2}(H)+\f\delta4,H-\e<y<H+\e\right\}$$ for small $\e>0$.

 It is easy to check that $I_0=U_{\delta,\e}\cap\p\{\psi<Q\}$.  We
extend $\psi_n=Q$ and $\psi=Q$ in $U_{\delta,\e}\setminus \O_\mu$.
Since $\text{dist}(A_i, U_{\delta,\e})\geq-\f{\delta}4$ ($i=1,2$),
Lemma \ref{lb60} gives that $\psi_n\in C^{0,1}(\bar U_{\delta,\e})$.
Obviously, $0<\psi_n\leq Q$ in $U_{\delta,\e}$ for small $\e>0$.

Next, we show that
$$\Gamma_{2,\ld_n,Q_{1,n},\mu}\cap\{y=H\}=\varnothing\ \ \text{and}\ \ U_{\delta,\e}\cap\p\{\psi_n<Q\}\neq\varnothing\ \ \text{for sufficiently large $n$}.$$
Noting that $k_{2,n}(H)\rightarrow k_{2,\ld,Q_1,\mu}(H)+\delta\geq
1$ with $\delta<0$, then $k_{2,n}(H)\leq
k_{2,\ld,Q_1,\mu}(H)+\f{7\delta}8$ for sufficiently large $n$. The
definition of $k_{2,n}(H)$ implies that
$B_r((k_{2,n}(H),H))\cap\Gamma_{2,\ld_n,Q_{1,n},\mu}\neq\varnothing$
for any $r>0$, which yields that
$\Gamma_{2,\ld_n,Q_{1,n},\mu}\cap\{y=H\}=\varnothing$, due to that
the free boundary $\Gamma_{2,\ld_n,Q_{1,n},\mu}$ is a $y$-graph.
Recalling that $\psi_n\rightarrow\psi$ in $\bar U_{\delta,\e}$ and
$\psi<Q$ in $U_{\delta,\e}\cap\{y<H\}$, it follows from
$k_{2,n}(H)\leq k_{2,\ld,Q_1,\mu}(H)+\f{7\delta}8$ that
$U_{\delta,\e}\cap\p\{\psi_n<Q\}\neq\varnothing$ for sufficiently
large $n$.

Since the right free boundary of $\psi_n$ is analytic, it holds that
$U_{\delta,\e}\cap\p\{\psi_n<Q\}$ is $C^{1,\alpha}$ ($0<\alpha<1$),
and
$$\Delta\psi_n=0  \ \text{in}~~U_{\delta,\e}\cap\{\psi_n<Q\},\ \ \f{\p\psi_n}{\p\nu}=\sqrt{2\lambda_n-2gy}\ \ \text{on}~~ U_{\delta,\e}\cap\p\{\psi_n<Q\},$$
where $\nu$ is the outer normal. Moreover, the $C^{1,\alpha}$-norm
of  $U_{\delta,\e}\cap\p\{\psi_n<Q\}$ is independent of $n$. In
fact, for $\phi_n=\f{Q-\psi_n}{\sqrt{2\ld_n}}$, it is easy to check
that
$$\phi_n=0\ \ \text{and}\ \ \sqrt{1-\f{4gH}{3\ld}}\leq|\g\phi_n|=\sqrt{1-\f{gy}{\ld_n}}\leq 1\ \ \ \text{on}\ \ U_{\delta,\e}\cap\p\{\psi_n<Q\},$$
 for
sufficiently large $n$. Applying the results in Section 8 in
\cite{AC1}, we can conclude that the free boundary
$U_{\delta,\e}\cap\p\{\psi_n<Q\}$ is analytic and  the uniform
estimate of $|\nabla \phi_n|$ gives that the $C^{1,\alpha}$-norm of
$U_{\delta,\e}\cap\p\{\psi_n<Q\}$ is independent of $n$.

Furthermore, by virtue of the results in previous steps, one has
$$\psi_n\rightarrow\psi\ \ \text{uniformly in $U_{\delta,\e}$, $\g\psi_n\rightharpoonup\g\psi$ weakly in
$L^2(U_{\delta,\e})$},$$ and
$$\text{$U_{\delta,\e}\cap\{\psi_n<Q\}\rightarrow
U_{\delta,\e}\cap\{\psi<Q\}$ in $\mathcal{L}^2$ measure,
$I_0=U_{\delta,\e}\cap\p\{\psi<Q\}$.}$$ Therefore, we can apply the
convergence of free boundaries in Lemma 6.1 in Chapter 3 in
\cite{FA1} to obtain
$$\Delta\psi=0  \ \text{in}~~U_{\delta,\e}\cap\{\psi<Q\},\ \ \psi=Q\ \text{and}\ \f{\p\psi}{\p\nu}=\sqrt{2\lambda-2gH}\ \ \text{on}~~
I_0=U_{\delta,\e}\cap\p\{\psi<Q\},$$ which implies that
$$\f{\p\psi(x,H-0)}{\p y}=\sqrt{2\ld-2gH}\ \ \text{on}\ \
I_0.$$ It follows from the Cauchy-Kovalevskaya theorem for $\psi$ in
$U_{\delta,\e}\cap\{y<H\}$ that $$\psi=\sqrt{2\ld-2gH}(y-H)+Q\ \
\text{in}\ \ \ U_{\delta,\e}\cap\{y<H\}.$$
 By using the unique continuation for the harmonic function, one has
$$\psi=\sqrt{2\ld-2gH}(y-H)+Q\ \ \text{in}\ \left\{(x,y)\mid-\infty<x<+\infty,H-\e<y<H\right\}\cap \O_{\mu},
$$
which is impossible.

{\bf Subcase 1.2.} $k_{2}(H)+\delta<1$.
 The monotonicity of $\psi(x,y)$
gives that $\psi(x,H)=Q$ for $k_{2}(H)+\delta\leq x\leq
\min\{k_{2}(H),1\}$. Denote $I_0=\left\{(x,H)\mid
k_{2}(H)+\delta+\e<x
 <k_{2}(H)+\delta+2\e\right\}$ for
 $\e=\f{\min\{1-k_2(H),0\}-\delta}{3}$. Similar to Subcase 1.1, by virtue of the convergence
 of the free boundary of $\psi_n$, one has
$$\f{\p\psi(x,H+0)}{\p y}=\sqrt{2\ld-2gH}\ \ \text{on}\ \
I_0,$$ which also leads to a contradiction by using the
Cauchy-Kovalevskaya theorem.

 {\bf Case 2.} $\delta>0$ and $k_{2}(H)<1$. Set $\e=\f{\min\{\delta,1-k_{2}(H)\}}4$. The
 convergence of the free boundary gives that
$$\f{\p\psi(x,H+0)}{\p y}=\sqrt{2\ld-2gH}\ \ \text{if}\ \
k_{2}(H)+\e<x<1-\e,\ y=H, $$  which leads to a contradiction due to
the Cauchy-Kovalevskaya theorem.

 {\bf Case 3.} $\delta>0$ and $k_{2}(H)\geq1$.  It follows from the arguments in Lemma 5.6 (i) in \cite{ACF3} that
$$\f{\p\psi_{n}(x,H-0)}{\p y}\geq\sqrt{2\lambda_n-2gH}\ \ \text{on}\ \
\left\{ k_{2}(H)+\f{\delta}4<x<k_{2}(H)+\f{3\delta}4, y=H\right\},$$
for sufficiently large $n$.

Let $E_n$ be the domain bounded by $y=H$, $x=k_{2,n}(y)$,
$x=k_{2}(H)+\f{\delta}4$ and $x=k_{2}(H)+\f{3\delta}4$. Since
$k_{2,n}(H)\rightarrow k_{2}(H)+\delta$ with $\delta>0$, one has
$k_{2,n}(H)\geq k_{2}(H)+\f{7\delta}8$ %and
%$\Gamma_{2,\ld_n,Q_{1,n},\mu}\cap
%\left\{x=\f{3\delta}{4}\right\}\neq\varnothing$
for sufficiently large $n$. Denote $y_\delta=\max\left\{y\mid
k_{2}(y)=k_{2}(H)+\f{\delta}{8}\right\}$. It follows from
$D_\mu\cap\p \{0<\psi_n<Q\}\rightarrow D_\mu\cap\p \{0<\o<Q\}$ in
the Hausdorff distance in Step 1 that
$$D_\delta\cap\p \{\psi_n<Q\}\rightarrow D_\delta\cap\p
\{\o<Q\}\ \ \text{in the Hausdorff distance},$$ where
$D_\delta=D_\mu\cap\left\{k_{2}(H)<x<k_{2}(H)+\delta\right\}$. This
implies that
$$k_{2,n}(y_\delta)\rightarrow k_{2,\ld,Q_1,\mu}(y_\delta)=k_{2}(H)+\f{\delta}{8}.$$ Thus, one has $\Gamma_{2,\ld_n,Q_{1,n},\mu}\cap
\left\{x=k_{2}(H)+\f{\delta}{4}\right\}\neq\varnothing$ for
sufficiently large $n$. Therefore, the domain $E_n$ is well-defined.

Furthermore, it follows from $\psi_n\rightarrow \psi$ and $\psi=Q$
on $L_{2,\mu}$ that
$$y_n=\max\left\{y\mid
k_{2,n}(y)=k_{2}(H)+\f{3\delta}4\right\}\rightarrow H\ \ \text{as}\
\ n\rightarrow+\infty.$$ Thanks to the non-oscillation Lemma
\ref{lb5} and Remark \ref{rl4} for $\psi_{n}$ in $E_n$, there exists
a constant $C$ independent of $n$, such that
$$0<\f\delta2\leq C(H-y_n),$$
which gives a contradiction for sufficiently large $n$.

\end{proof}

%Next, we will introduce the continuous fit conditions of the free
%boundaries $\Gamma_{1,\ld,Q_1,L}$ and $\Gamma_{2,\ld,Q_1,L}$,
%namely, \be\label{b42}k_{1,\ld,Q_1,L}(H)=-1 \ \text{and} \
%k_{2,\ld,Q_1,L}(H)=1\ \ \ \text{for some parameters $\ld$ and
%$Q_1$}. \ee

Second, the monotonicity of the minimizer $\psi_{\ld,Q_1,\mu}$ with
respect to the parameter $Q_1$ is obtained as follows.

\begin{lemma}\label{lb8}
For any $Q_1,Q_1'\in[0,Q]$ with $Q_1>Q_1'$, it holds that
$$\text{$\psi_{\ld,Q_1,\mu}(x,y)\geq\psi_{\ld,Q_1',\mu}(x,y)$ in
$\O_{\mu}$.}$$
\end{lemma}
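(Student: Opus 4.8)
The plan is to combine an ordering of the boundary data with the standard truncation/rearrangement argument already used in the proof of Proposition \ref{lb4}, followed by the uniqueness statement proved there. Write $\psi=\psi_{\ld,Q_1,\mu}$ and $\psi'=\psi_{\ld,Q_1',\mu}$, and let $(h_1,h_2)$ and $(h_1',h_2')$ be the asymptotic heights associated with $(\ld,Q_1)$ and $(\ld,Q_1')$ respectively. The crucial simplification is that both functions minimize \emph{the same} functional $J_{\ld,\mu}$ on the same truncated domain $\O_\mu$; only their boundary traces $\Psi_{\ld,Q_1,\mu}$ and $\Psi_{\ld,Q_1',\mu}$ differ. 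Thus the proof has two ingredients: first, to show the boundary data are ordered, $\Psi_{\ld,Q_1,\mu}\geq\Psi_{\ld,Q_1',\mu}$ on $\p\O_\mu$; second, to run a lattice argument with $\max$ and $\min$ and invoke uniqueness.

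For the boundary comparison I would check the ordering piece by piece on $\p\O_\mu$. On $N_{1,\mu}\cup L_{1,\mu}$ both traces equal $0$, on $N_{2,\mu}\cup L_{2,\mu}$ both equal $Q$, and on $\sigma_\mu$ both equal $\f{(y-H_{1,\mu})Q}{H_{2,\mu}-H_{1,\mu}}$; on $N_\mu$ one has $Q_1>Q_1'$. The only pieces requiring real work are $\sigma_{1,\mu}$ and $\sigma_{2,\mu}$. By Proposition \ref{lb0}(2), $Q_1>Q_1'$ gives $h_1\geq h_1'$ and $h_2\leq h_2'$, so $\sqrt{2\ld-2gh_1}$ is nonincreasing and $\sqrt{2\ld-2gh_2}$ is nondecreasing in $Q_1$ (these square roots are positive since $\ld>gH\geq gh_i$). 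On $\sigma_{2,\mu}$ the unclipped trace $\sqrt{2\ld-2gh_2}\,y+Q_1$ has both an increasing slope and an increasing intercept, hence is nondecreasing in $Q_1$, and taking $\min\{\cdot,Q\}$ preserves the ordering. On $\sigma_{1,\mu}$ the unclipped trace $-\sqrt{2\ld-2gh_1}\,y+Q_1$ also increases in $Q_1$ (decreasing coefficient of $-y$, increasing intercept), and $\max\{\cdot,0\}$ preserves the ordering. This yields $\Psi_{\ld,Q_1,\mu}\geq\Psi_{\ld,Q_1',\mu}$ on $\p\O_\mu$.

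For the lattice step, set $\psi_1=\max\{\psi,\psi'\}$ and $\psi_2=\min\{\psi,\psi'\}$. The boundary ordering just established, together with the facts that $\psi=Q_1>Q_1'=\psi'$ below $N$ while $\psi=\psi'$ to the left of $N_1\cup L_1$ and to the right of $N_2\cup L_2$, shows that $\psi_1\in K_{\ld,Q_1,\mu}$ and $\psi_2\in K_{\ld,Q_1',\mu}$. Since at every point $(\psi_1,\psi_2)$ is a rearrangement of $(\psi,\psi')$, one has the pointwise identities $|\g\psi_1|^2+|\g\psi_2|^2=|\g\psi|^2+|\g\psi'|^2$ and $\chi_{\{0<\psi_1<Q\}}+\chi_{\{0<\psi_2<Q\}}=\chi_{\{0<\psi<Q\}}+\chi_{\{0<\psi'<Q\}}$ a.e., whence $J_{\ld,\mu}(\psi_1)+J_{\ld,\mu}(\psi_2)=J_{\ld,\mu}(\psi)+J_{\ld,\mu}(\psi')$. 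Minimality of $\psi$ in $K_{\ld,Q_1,\mu}$ and of $\psi'$ in $K_{\ld,Q_1',\mu}$ gives $J_{\ld,\mu}(\psi)\leq J_{\ld,\mu}(\psi_1)$ and $J_{\ld,\mu}(\psi')\leq J_{\ld,\mu}(\psi_2)$; combined with the identity, both must be equalities, so $\psi_1$ and $\psi_2$ are themselves minimizers. The uniqueness in Proposition \ref{lb4} then forces $\psi_1=\psi$ and $\psi_2=\psi'$, that is $\psi\geq\psi'$ in $\O_\mu$.

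The hard part is the boundary ordering on $\sigma_{1,\mu}$ and $\sigma_{2,\mu}$, where the traces depend on $Q_1$ both explicitly and implicitly through $h_1,h_2$; the required monotonicity rests entirely on the qualitative dependence $h_1\uparrow$, $h_2\downarrow$ supplied by Proposition \ref{lb0}(2). The remaining lattice/uniqueness step is routine and is in fact cleaner than in Proposition \ref{lb4}, because here there is no domain translation and both minimizers live on the same $\O_\mu$ with the same functional $J_{\ld,\mu}$.
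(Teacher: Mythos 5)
Your proof is correct, and it overlaps with the paper's argument in its first half but closes by a genuinely different mechanism. The paper likewise begins by forming $\max\{\psi_{\ld,Q_1,\mu},\psi_{\ld,Q_1',\mu}\}\in K_{\ld,Q_1,\mu}$ and $\min\{\psi_{\ld,Q_1,\mu},\psi_{\ld,Q_1',\mu}\}\in K_{\ld,Q_1',\mu}$, but dismisses this membership as ``easy to check''; you actually supply the one nontrivial ingredient, the ordering of the boundary traces on $\sigma_{1,\mu}\cup\sigma_{2,\mu}$, which indeed rests on the monotone dependence $h_1\uparrow$, $h_2\downarrow$ on $Q_1$ from Proposition \ref{lb0}(2) --- a detail worth having on record. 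After that the routes diverge: the paper's citation of the strict inequality $\psi_{\ld,Q_1',\mu}=Q_1'<Q_1=\psi_{\ld,Q_1,\mu}$ on $N_\mu$ signals that it intends to re-run the comparison machinery of Proposition \ref{lb4} --- the rearrangement energy identity, the local dichotomy \eqref{b18}, the connectivity \eqref{b182}, and the strong maximum principle, with the strict boundary inequality on $N_\mu$ playing the role that $\t\psi_\e<Q=\psi$ on $N_{2,\mu}$ played there in selecting which global ordering holds. You instead stop right after the energy identity: equality in $J_{\ld,\mu}(\psi)+J_{\ld,\mu}(\psi')\leq J_{\ld,\mu}(\psi_1)+J_{\ld,\mu}(\psi_2)$ forces $\max\{\psi,\psi'\}$ to be a minimizer in $K_{\ld,Q_1,\mu}$, and the uniqueness assertion of Proposition \ref{lb4} then gives $\max\{\psi,\psi'\}=\psi$ at once. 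This is legitimate (uniqueness is proved before Lemma \ref{lb8}, and here both functions minimize the same functional on the same domain, so no translation is involved) and is shorter and cleaner than repeating the dichotomy/connectivity/maximum-principle analysis; what the paper's route buys in exchange is that it never uses uniqueness as a black box, but since uniqueness is already on record, your shortcut is sound.
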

\begin{proof}
It is easy to check that
$$\min\{\psi_{\ld,Q_1,\mu},\psi_{\ld,Q_1',\mu}\}\in K_{\ld,Q_1',\mu}\ \ \ \text{and}\ \ \ \max\{\psi_{\ld,Q_1,\mu},\psi_{\ld,Q_1',\mu}\}\in
K_{\ld,Q_1,\mu}.$$ Since
$\psi_{\ld,Q_1',\mu}=Q_1'<Q_1=\psi_{\ld,Q_1,\mu}$ on $N_\mu$, it
follows from the similar arguments in Proposition \ref{lb4} that
$$\psi_{\ld,Q_1,\mu}(x,y)\geq\psi_{\ld,Q_1',\mu}(x,y)\ \ \text{in}\ \ \O_\mu.$$

\end{proof}

Next, we will verify the Fact 3.

\begin{lemma}\label{lb9}For $Q_1=\f Q2$, there exists a $\ld>\f{Q^2}{8H^2}+gH$, such that if $\ld-\f{Q^2}{8H^2}-gH$ is small, then $$
k_{1,\ld,Q_1,\mu}(H)<-1 \ \text{and}\ k_{2,\ld,Q_1,\mu}(H)>1.$$
\end{lemma}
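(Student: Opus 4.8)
The plan is to push $\lambda$ just above the threshold value $\lambda_0:=\f{Q^2}{8H^2}+gH$ (which is exactly the admissible lower bound for $Q_1=\f Q2$ coming from Proposition \ref{lb0}) and to exploit the fact that at $\lambda_0$ the jet fills the entire strip, so that a small perturbation forces the two detachment points past the endpoints $A_1,A_2$. First I would record the $\lambda$--$h$ relation: for $Q_1=\f Q2$ the two asymptotic heights coincide, $h_1=h_2=:h(\lambda)$, and solve $\f{Q^2}{8h^2}+gh=\lambda$. Since $Q>2\sqrt{gH^3}$, the computation \eqref{b002} shows that $t\mapsto\f{Q^2}{8t^2}+gt$ is strictly decreasing on $(0,H]$ with minimum $\lambda_0$ attained at $t=H$; hence $h(\lambda)\in(0,H]$ is well defined, $h(\lambda_0)=H$, and $h(\lambda)\to H^-$ as $\lambda\to\lambda_0^+$. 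By Remark \ref{rb2} the free boundaries $\Gamma_{1,\ld,Q/2,\mu}$ and $\Gamma_{2,\ld,Q/2,\mu}$ are then trapped in the thin strip $\{h(\lambda)<y<H\}$, which collapses onto the line $\{y=H\}$ as $\lambda\downarrow\lambda_0$.

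Next I would analyse the limiting configuration at $\lambda_0$. There $h=H$, so the bound of Remark \ref{rb2} degenerates into $\psi_{\lambda_0,Q/2,\mu}<Q$ (and symmetrically $>0$) throughout the open strip $D_\mu$; consequently $\{0<\psi_{\lambda_0,Q/2,\mu}<Q\}\supseteq D_\mu$ and there is \emph{no} free boundary inside $\{0<y<H\}$. Because $\psi_{\lambda_0,Q/2,\mu}$ is increasing in $x$ (Proposition \ref{lb4}) with boundary values $0$ at $A_1$ and $Q$ at $A_2$, it is moreover strictly below $Q$ on the open orifice $\{-1<x<1,\,y=H\}$ and equals $Q$ only on $\overline{L_2}$. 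I would then invoke the compactness and Hausdorff--convergence machinery of Lemma \ref{lb7}: assuming, for contradiction, a sequence $\lambda_n\downarrow\lambda_0$ with $k_{2,\lambda_n,Q/2,\mu}(H)\le 1$, one gets $\psi_{\lambda_n,Q/2,\mu}\to\psi_{\lambda_0,Q/2,\mu}$ uniformly on compacta together with convergence of the free boundaries, so any limit $(a,H)$ of the detachment points satisfies $\psi_{\lambda_0,Q/2,\mu}(a,H)=Q$; by the previous sentence this forces $a\ge 1$, and combined with $k_{2,n}(H)\le 1$ it pins $a=1$. Thus the only way the claim can fail is that the free boundary detaches asymptotically at the endpoint $A_2$.

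The hard part will be to exclude this borderline detachment at $A_2$ (and, for each fixed $n$, the strict undershoot $k_{2,n}(H)<1$), i.e. to upgrade the soft bound $k_2(H)\ge 1$ to the strict $k_2(H)>1$; pure continuity at interior test points is not enough here, precisely because the decisive information lives on the boundary line $y=H$. I would dispose of it by the same mechanism already used in Lemma \ref{lb6} and in Step~5 of Lemma \ref{lb7}: whenever the free boundary meets $\{y=H\}$ at or to the left of $A_2$ one produces a nondegenerate segment $I_0\subset\{y=H\}$ carrying simultaneously $\psi=Q$ and, through the free-boundary condition of Proposition \ref{lb1} and the convergence of free boundaries, the Neumann datum $\p_y\psi(\cdot,H\mp0)=\mp\sqrt{2\lambda-2gH}$; the Cauchy--Kovalevskaya theorem then gives $\psi=\sqrt{2\lambda-2gH}\,(y-H)+Q$ in a one-sided neighbourhood, and unique continuation propagates this identity across a full horizontal strip, contradicting the boundary data on $L_1$ (or $\sigma_{1,\mu}$). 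The genuinely delicate point, exactly as in the case analysis of Step~5 of Lemma \ref{lb7}, is to manufacture such a \emph{segment} rather than a single contact point and to secure the one-sided regularity of the free boundary up to $\{y=H\}$ uniformly in $n$, using Lemma \ref{lb3}(2) to know that the flow just above the orifice is interior with $0<\psi<Q$.

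Once the undershoot and the borderline case are ruled out, I obtain $k_{2,\lambda,Q/2,\mu}(H)>1$ for every $\lambda$ sufficiently close to $\lambda_0$. The left detachment point is treated symmetrically: using the lower bound in Remark \ref{rb2}, the value $0$ at $A_1$, and the monotonicity in $x$, the identical Cauchy--Kovalevskaya/unique-continuation argument excludes $k_{1,\lambda,Q/2,\mu}(H)\ge -1$, yielding $k_{1,\lambda,Q/2,\mu}(H)<-1$. Choosing $\lambda-\lambda_0$ small then gives both strict inequalities at once, which is the assertion of the lemma.
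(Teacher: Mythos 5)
Your proposal is correct and follows essentially the same route as the paper's own proof: argue by contradiction along a sequence $\ld_n\downarrow\ld_0=\f{Q^2}{8H^2}+gH$, use the barriers of Lemma \ref{lb3}/Remark \ref{rb2} to see that $h_{1,n}=h_{2,n}\to H$ squeezes the (nonempty) free boundaries onto $\{y=H\}$ while at $\ld_0$ there is no free boundary in $D_\mu$ at all, and then extract from the free-boundary convergence machinery of Lemma \ref{lb7} overdetermined Cauchy data ($\psi=Q$ together with the Neumann value $\sqrt{2\ld_0-2gH}$) on a segment of $\{y=H\}$, which Cauchy--Kovalevskaya plus unique continuation turns into a contradiction with the boundary data. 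The only cosmetic differences are your extra step pinning the limit of the detachment points at $A_2$ (not needed in the paper) and your vaguer placement of the test segment, which the paper puts explicitly far to the right, at $\f{3+\mu}4<x<\f{1+3\mu}4$, precisely so that the uniform Lipschitz and free-boundary regularity estimates away from $A_1,A_2$ apply.
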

\begin{proof}
Suppose not, without loss of generality, we assume that there exists
a sequence $\{\ld_n\}$ with $\ld_n>\ld_0=\f{Q^2}{8H^2}+gH$, such
that $\ld_n\downarrow\ld_0$ and $k_{2,\ld_n,Q_1,\mu}(H)\leq1$.

Then Lemma \ref{lb3} implies that
$$\max\left\{-\sqrt{2\ld_n-2gh_{1,n}}y+Q_1,0\right\}\leq\psi_{\ld_n,Q_1,\mu}(x,y)\leq\min\left\{\sqrt{2\ld_n-2gh_{2,n}}
y+Q_1,Q\right\}$$ in $D_\mu$, where $h_{1,n}=h_{2,n}\in(0,H]$ is
determined uniquely by
$$\ld_n=\f{Q^2}{8h_{1,n}^2}+gh_{1,n}.$$

With the aid of Lemma \ref{lb7}, taking $n\rightarrow+\infty$ in
above inequality gives that
$$\f Q2\left(1-\f yH\right)=-\sqrt{2\ld_0-2gH}y+\f Q2\leq\psi_{\ld_0,Q_1,\mu}(x,y)\leq\sqrt{2\ld_0-2gH}y+\f
Q2=\f Q2\left(1+\f yH\right)$$ in $D_\mu$, which implies that
$0<\psi_{\ld_0,Q_1,\mu}(x,y)<Q$ in $D_\mu$, and thus both of the
free boundaries $\Gamma_{1,\ld_0,Q_1,\mu}$ and
$\Gamma_{2,\ld_0,Q_1,\mu}$ are empty in $D_\mu$. For the case
$k_{2,\ld_n,Q_1,\mu}(H)\leq1$, we claim that the free boundary
$\Gamma_{2,\ld_n,Q_1,\mu}$ is non-empty.  In fact, if
$\Gamma_{2,\ld_n,Q_1,\mu}$ is empty, namely,
$\psi_{\ld_n,Q_1,\mu}<Q$ in $D_\mu$. %We will show that
%$$k_{2,\ld_n,Q_1,\mu}(H)=\mu.$$ Suppose that $k_{2,\ld_n,Q_1,\mu}(H)<\mu$.Similar to Proposition \ref{lb1}, we
%can show that $\psi_{\ld_n,Q_1,\mu}$ is harmonic in
%$\O_\mu\cap\{\psi_{\ld_n,Q_1,\mu}>0\}$, which implies that
%$k_{2,\ld_n,Q_1,\mu}(H)\geq 1$. Since $1\leq
%k_{2,\ld_n,Q_1,\mu}(H)<\mu$,
Since $k_{2,\ld_n,Q_1,\mu}(H)\leq1$, the definition of
$k_{2,\ld_n,Q_1,\mu}(H)$ gives that
$B_r((k_{2,\ld_n,Q_1,\mu}(H),H))\cap\Gamma_{2,\ld_n,Q_1,\mu}\neq\varnothing$
for any $r>0$. Therefore, the free boundary
$\Gamma_{2,\ld_n,Q_1,\mu}$ is non-empty. This contradicts to our
assumption that the free boundary $\Gamma_{2,\ld_n,Q_1,\mu}$ is
empty.

Denote $\psi=\psi_{\ld_0,Q_1,\mu}$ and $\psi_n=\psi_{\ld_n,Q_1,\mu}$
for simplicity. Set
$$I= \left\{(x,H)\mid \f{3+\mu}4<x
 <\f{1+3\mu}4\right\}$$ and $$U_{\e}=\left\{(x,y)\mid \f{3+\mu}4<x
 <\f{1+3\mu}4,H-\e<y<H+\e\right\}$$ for small $\e>0$.

 Obviously, $I=U_{\e}\cap\p\{\psi<Q\}$ and
$\text{dist}(A_i, U_{\e})\geq\f{1}4$ ($i=1,2$). Extend $\psi_n=Q$
and $\psi=Q$ in $U_{\e}\setminus \O_\mu$. It follows from Lemma
\ref{lb60} that $\psi_n\in C^{0,1}(\bar U_{\e})$ and $0<\psi_n\leq
Q$ in $U_{\e}$ for small $\e>0$. The analyticity of
$\Gamma_{2,\ld_n,Q_1,\mu}$ implies that $U_{\e}\cap\p\{\psi_n<Q\}$
is $C^{1,\alpha}$ ($0<\alpha<1$), and
$$\Delta\psi_n=0  \ \text{in}~~U_{\e}\cap\{\psi_n<Q\},\ \ \f{\p\psi_n}{\p\nu}=\sqrt{2\lambda_n-2gy}\ \ \text{on}~~ U_{\e}\cap\p\{\psi_n<Q\},$$
where $\nu$ is the outer normal. Moreover, following the similar
arguments in the proof of Lemma \ref{lb7}, one has
$$\psi_n\rightarrow\psi\ \ \text{uniformly in $U_{\e}$, $\g\psi_n\rightharpoonup\g\psi$ weakly in
$L^2(U_{\e})$},$$ and
$$\text{$U_{\e}\cap\{\psi_n<Q\}\rightarrow
U_{\e}\cap\{\psi<Q\}$ in $\mathcal{L}^2$ measure,
$I=U_{\e}\cap\p\{\psi<Q\}$.}$$ Therefore, we can apply the
convergence of free boundaries in Lemma 6.1 in Chapter 3 in
\cite{FA1} to get
$$\Delta\psi=0  \ \text{in}~~U_{\e}\cap\{\psi<Q\},\ \ \psi=Q\ \text{and}\ \f{\p\psi(x,H-0)}{\p\nu}=\sqrt{2\lambda-2gH}\ \ \text{on}~~
I=U_{\e}\cap\p\{\psi<Q\}.$$ It follows from the Cauchy-Kovalevskaya
theorem for $\psi$ in $U_{\e}\cap\{y<H\}$ that
$$\psi=\sqrt{2\ld-2gH}(y-H)+Q\ \ \text{in}\ \ \ U_{\e}\cap\{y<H\}.$$
 By using the unique continuation for the harmonic function, one has
$$\psi=\sqrt{2\ld-2gH}(y-H)+Q\ \ \text{in}\ \left\{(x,y)\mid-\infty<x<+\infty,H-\e<y<H\right\}\cap \O_{\mu},
$$
which is impossible.

%In view of that the free boundary $\Gamma_{2,\ld_0,Q_1,\mu}$ is
%empty and the free boundary $\Gamma_{2,\ld_n,Q_1,\mu}$ is non-empty,
%applying the similar arguments in Subcase 1.1 in the proof of Lemma
%\ref{lb7}, we can conclude that the free boundary
%$\Gamma_{2,\ld_n,Q_1,\mu}$ converges to $\bar
%D_\mu\cap\p\{\psi_{\ld_0,Q_1,\mu}<Q\}$ and the minimizer
%$\psi_{\ld_0,Q_1,\mu}$ satisfies the free boundary conditions on
%$I$, namely,
%$$\psi_{\ld_0,Q_1,\mu}(x,H)=Q,\ \ \text{and}\ \ \f{\p\psi_{\ld_0,Q_1,\mu}(x,H-0)}{\p y}=\sqrt{2\ld_0-2gH} \ \text{on}\ \
%I,$$  where
%$I=\left\{(x,H),\f{3+\mu}4<x<\f{1+3\mu}4\right\}\subset\bar
%D_\mu\cap\p\{\psi_{\ld_0,Q_1,\mu}<Q\}$, which leads to a contradiction
%via the Cauchy-Kovalevskaya theorem.

\end{proof}

The following lemma gives the Fact 4.

\begin{lemma}\label{lb10} For any $Q_1\in(0,Q)$, there exists a positive constant $C_0$ independent of
$\mu$ and $Q_1$, such that \be\label{b44}k_{1,\ld,Q_1,\mu}(H)>-1\ \
\text{or}\ \  k_{2,\ld,Q_1,\mu}(H)<1,\ee for any
$\ld>C_0$.\end{lemma}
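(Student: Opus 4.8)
The plan is to argue by contradiction, exploiting that a large $\ld$ produces a very fast and hence very thin falling jet near the orifice. Negating the conclusion, suppose there are arbitrarily large values of $\ld$ (with $Q_1\in(0,Q)$ and $\mu>1$ allowed to vary) for which $k_{1,\ld,Q_1,\mu}(H)\le-1$ and $k_{2,\ld,Q_1,\mu}(H)\ge1$ hold simultaneously; this is exactly the failure of \eqref{b44}. Writing $k_i=k_{i,\ld,Q_1,\mu}$ and $\psi=\psi_{\ld,Q_1,\mu}$, the fluid cross-section at the orifice then has width $k_2(H)-k_1(H)\ge2$. On the other hand, the free-boundary speed is $\sqrt{2\ld-2gy}$ and, by Proposition \ref{lb0} (see Remark \ref{rb1}), the downstream widths satisfy $h_i\le Q/\sqrt{2(\ld-gH)}\to0$ uniformly in $Q_1$. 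I will quantify this thinness directly at the level of $\psi$ and show that the cross-sectional width of the jet is $O(Q/\sqrt{\ld})$ at every height, which for $\ld$ large is incompatible with a width $\ge2$ at $y=H$, and thereby produces the uniform bound $C_0$.

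The core is a width estimate. By Proposition \ref{lb4} the minimizer is increasing in $x$, so at each height $y$ the section $\{0<\psi<Q\}$ is the single interval $(k_1(y),k_2(y))$ and $\psi(\cdot,y)$ rises monotonically from $0$ to $Q$. Fix a height $y\le H-\delta_1$ with $\delta_1>0$ small but fixed, and let $X_0=(k_1(y),y)$ be the left free-boundary point; since $\mathrm{dist}(X_0,A_1),\,\mathrm{dist}(X_0,A_2)\ge\delta_1$, the uniform Lipschitz bound of Lemma \ref{lb60} and the non-degeneracy estimate \eqref{b30} of Lemma \ref{lf3}, namely $\f1r\fint_{\p B_r(X_0)}\psi\,dS\ge c^*\sqrt{2\ld-2gH}$, hold with $c^*$ and the admissible radius $r_0$ independent of $\ld,Q_1,\mu$. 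Put $r_\ld=\f{Q}{c^*\sqrt{2\ld-2gH}}$, which is $<r_0$ once $\ld$ is large. If the width $k_2(y)-k_1(y)$ were $\ge C r_\ld$ for a suitably large constant $C$ (fixed in terms of the Lipschitz constant of the free boundary), then the Lipschitz-graph property of $\Gamma_{2,\ld,Q_1,\mu}$ would keep $B_{r_\ld}(X_0)$ inside $\{\psi<Q\}$, so that $\sup_{B_{r_\ld}(X_0)}\psi<Q$; but non-degeneracy gives $\sup_{B_{r_\ld}(X_0)}\psi\ge c^*\sqrt{2\ld-2gH}\,r_\ld=Q$, a contradiction. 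Hence $k_2(y)-k_1(y)\le C r_\ld=\f{CQ}{\sqrt{2\ld-2gH}}$ for every $y\in(\max\{h_1,h_2\},H-\delta_1]$, and this bound tends to $0$ as $\ld\to+\infty$, uniformly in $Q_1$ and $\mu$.

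It remains to transfer this smallness to the orifice level $y=H$. For a fixed small $\delta_1$ the previous step gives $k_2(H-\delta_1)-k_1(H-\delta_1)\le CQ/\sqrt{2\ld-2gH}$, and the non-oscillation Lemma \ref{lb5} together with Remark \ref{rl4} controls the horizontal displacement of each free boundary over the thin slab $H-\delta_1<y<H$ by $C\delta_1$, whence, using also the continuity of $k_i$ up to $H$ (Lemma \ref{lb6}), $k_2(H)-k_1(H)\le CQ/\sqrt{2\ld-2gH}+C\delta_1$. Choosing $\delta_1$ small enough that $C\delta_1<\tfrac12$ and then $\ld$ large enough that $CQ/\sqrt{2\ld-2gH}<\tfrac12$ forces $k_2(H)-k_1(H)<1$, contradicting the double overshoot $k_2(H)-k_1(H)\ge2$. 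This fixes $C_0$ in terms of $Q,g,H$ and the uniform constants only, hence independent of $Q_1$ and $\mu$, and proves \eqref{b44} for $\ld>C_0$.

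The delicate point is precisely this last transfer near the endpoints $A_1,A_2$: away from the orifice the non-degeneracy argument cleanly pins the jet width to order $Q/\sqrt{\ld}$, but at $y=H$ the free-boundary points may approach the fixed endpoints, where the uniform Lipschitz and non-degeneracy constants degenerate and the non-oscillation lemma cannot be applied too close to $\overline{A_1A_2}$. In the residual configuration in which both free boundaries nearly fit the endpoints (so that a free boundary would meet $\{y=H\}$ along a segment), I would exclude it exactly as in Lemmas \ref{lb7} and \ref{lb9}: the convergence of free boundaries and the Cauchy--Kovalevskaya theorem with unique continuation would force $\psi=\sqrt{2\ld-2gH}\,(y-H)+Q$ throughout a full horizontal strip meeting $\{y=H\}$, which is impossible. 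This endpoint/corner analysis is the main obstacle; the interior width estimate of the second paragraph I expect to be routine.
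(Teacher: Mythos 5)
Your opening step is in the same spirit as the paper's: the paper also sets $r_0=\f{Q}{c^*\sqrt{2\ld-2gH}}$ and uses the non-degeneracy Lemma \ref{lf3} to show that every point of $\Gamma_{1,\ld,Q_1,\mu}$ lies within $2r_0$ of $\{\psi_{\ld,Q_1,\mu}=Q\}$ (and symmetrically for $\Gamma_{2,\ld,Q_1,\mu}$ and $\{\psi_{\ld,Q_1,\mu}=0\}$). Note, however, that this is a statement about two-dimensional distance; your stronger claim that the horizontal width $k_2(y)-k_1(y)$ is $O(Q/\sqrt{\ld})$ at every height leans on a ``Lipschitz-graph property'' of $x=k_2(y)$ that is not available uniformly in $\ld$ --- in the very configuration that must be excluded the free boundaries are nearly horizontal, so $|k_i'(y)|$ is enormous.

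The genuine gap is the transfer to $y=H$. The configuration compatible with thinness and with $k_1(H)\le-1$, $k_2(H)\ge1$ is the ``pinching'' one: the non-degeneracy step itself forces $\Gamma_{2,\ld,Q_1,\mu}$ to sweep nearly horizontally, at depth $O(r_0)$ below the segment $\overline{A_1A_2}$, from $x=k_2(H)\ge 1$ to within $2r_0$ of $(k_1(H),H)$, after which the jet descends as a thin sheet. In this configuration $|k_2(H)-k_2(H-\delta_1)|$ is of order $2$, not $C\delta_1$, so your slab estimate fails; and the non-oscillation Lemma \ref{lb5} cannot be invoked to control the sweep, because its hypothesis $\mathrm{dist}(E,\overline{A_1A_2})>c_0$ is violated (the relevant region is adjacent to the segment), while Remark \ref{rl4} would additionally require a sign condition $\f{\p\psi}{\p\nu}\ge 0$ on the upper boundary that you have not verified. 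Your fallback via Cauchy--Kovalevskaya and unique continuation does not apply either: at a fixed large $\ld$ the sweeping free boundary is merely close to horizontal, it contains no straight segment, and, unlike Lemmas \ref{lb7} and \ref{lb9}, there is no limiting process available here (the gradients blow up like $\sqrt{\ld}$, so no useful compactness). The paper eliminates precisely this configuration by a different device, which is the idea missing from your proposal: take $E$ to be the fluid region bounded by $x=\pm\f12$, the sweeping part of $\Gamma_{2,\ld,Q_1,\mu}$, and the line $y=H_0$ inside the nozzle, and consider the rescaled function $\Psi_\ld=\f{Q-\psi_{\ld,Q_1,\mu}}{\sqrt{2\ld}}$. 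It is harmonic in $E$, vanishes on the free-boundary piece where $\f12\le|\g\Psi_\ld|\le 1$, so by the Alt--Caffarelli regularity theory that piece has $C^3$-norm independent of $\ld$; the boundary gradient estimate for harmonic functions (Corollary 6.7 in \cite{GT}) then gives $\f12\le|\g\Psi_\ld|\le C\max_{E}|\Psi_\ld|\le C\f{Q}{\sqrt{2\ld}}$ on its middle portion, a contradiction for large $\ld$. Without an estimate of this type --- controlling the free-boundary gradient from above by the oscillation of $\psi$ over a region of fixed size --- your argument cannot close.
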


\begin{proof}
Suppose that there exist a $Q_1\in(0,Q)$ and a large $\ld$, such
that
$$k_{1,\ld,Q_1,\mu}(H)\leq-1\ \ \text{and}\ \ k_{2,\ld,Q_1,\mu}(H)\geq1.$$  Denote the two initial
points of the free boundaries as
$\mathcal{B}_1=\left(k_{1,\ld,Q_1,\mu}(H),H\right)$ and
$\mathcal{B}_2=\left(k_{2,\ld,Q_1,\mu}(H),H\right)$. Then one has
$$\f{1}{r}\fint_{\p
B_r(X_0)}\psi_{\ld,Q_1,\mu}dS\leq\f{Q}{r}\ \ \text{for any
$X_0\in\Gamma_{1,\ld,Q_1,\mu}$}.$$ Let
$r_0=\f{Q}{c^*\sqrt{\ld-2gH}}$ ($c^*$ as in Lemma \ref{lf3}). Then
$$\f{1}{r_0}\fint_{\p
B_{r_0}(X_0)}\psi_{\ld,Q_1,\mu}dS\leq\f{Q}{r_0}=c^*\sqrt{2\ld-2gH}\
\ \text{for any $X_0\in\Gamma_{1,\ld,Q_1,\mu}$}.$$ We claim that
$$B_{r_0}(X_0)\cap\{\psi_{\ld,Q_1,\mu}=Q\}\neq\varnothing\ \
\text{for any $X_0\in\Gamma_{1,\ld,Q_1,\mu}$}.$$ In fact, suppose
that there exists an $X_0\in\Gamma_{1,\ld,Q_1,\mu}$, such that
$B_{r_0}(X_0)\cap\{\psi_{\ld,Q_1,\mu}=Q\}=\varnothing$, which
implies that $\psi_{\ld,Q_1,\mu}<Q$ in $B_r(X_0)$. The
non-degeneracy Lemma \ref{lf4} in the appendix implies that
$\psi_{\ld,Q_1,\mu}\equiv0$ in $B_{\f r8}(X_0)$, which contradicts
to the fact $X_0\in\Gamma_{1,\ld,Q_1,\mu}$.

Thus it holds that
$$B_{2r_0}(\mathcal{B}_1)\ \ \text{intersects
$\left\{\psi_{\ld,Q_1,\mu}=Q\right\}$}.$$ Similarly,
$$B_{2r_0}(\mathcal{B}_2)\ \ \text{intersects
$\left\{\psi_{\ld,Q_1,\mu}=0\right\}$}.$$ Without loss of
generality, we assume that the free boundary
$\Gamma_{2,\ld,Q_1,\mu}$ lies above of the free boundary
$\Gamma_{1,\ld,Q_1,\mu}$ near the segment $\overline{A_1A_2}$. Let
$E\subset\O_\mu$ be the domain bounded by $x=\f12$, $x=-\f12$ and
$\Gamma_{2,\ld,Q_1,\mu}$ and $y=H_0$ (see Figure \ref{f9}), where
$H_0=\min\left\{y\mid g_2(y)=\f12\right\}$. Set
$\Psi_\ld(X)=\f{Q-\psi_{\ld,Q_1,\mu}(X)}{\sqrt{2\ld}}$. It is easy
to check that $\Psi_\ld$ is harmonic in $E$ and
$$\Psi_\ld=0\ \ \text{and}\ \ \f12\leq|\g\Psi_{\ld}|=\f{\sqrt{2\ld-2gy}}{\sqrt{2\ld}}\leq 1\ \ \ \text{on}\ \ \Gamma_{2,\ld,Q_1,\mu},$$
 for sufficiently large $\ld$. By means of the results in Section 8 in \cite{AC1}, we can conclude that the free boundary $\Gamma_{2,\ld,Q_1,\mu}$ of
  $\Psi_\ld$ is analytic and the $C^{3}$-norm of $\p E\cap\Gamma_{2,\ld,Q_1,\mu}$ is independent of $\ld$.
  Applying the elliptic estimate for harmonic function $\Psi_{\ld}$ on the boundary
$\p E\cap\Gamma_{2,\ld,Q_1,\mu}$ (see Corollary 6.7 in \cite{GT}),
we have
$$\f12\leq|\g\Psi_\ld|\leq C\max_{X\in E}|\Psi_\ld(X)|\leq C\f{Q}{\sqrt{2\ld}}\ \quad \text{on}\ \
(\p E\cap\Gamma_{2,\ld,Q_1,\mu})\cap\left\{-\f14\leq
x\leq\f14\right\},$$ where the constant $C$ depends on $E$ and the
$C^{3}$-norm of $\p E\cap\Gamma_{2,\ld,Q_1,\mu}$, and does not
depend on $\ld$. This leads to a contradiction for sufficiently
large $\ld$.
\begin{figure}[!h]
\includegraphics[width=100mm]{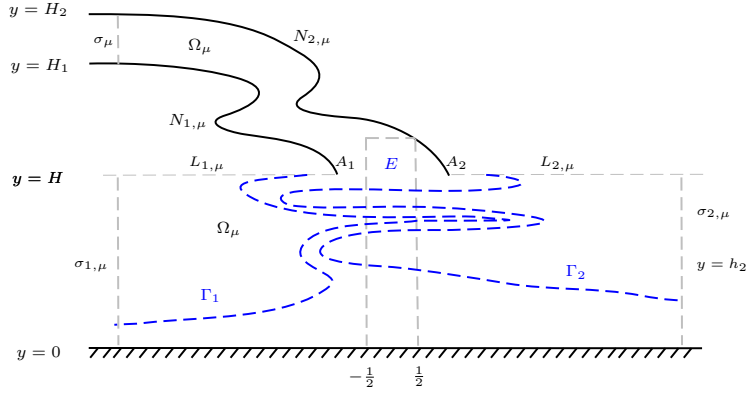}
\caption{The domain $E$}\label{f9}
\end{figure}

\end{proof}

Set \be\label{b45}\bar\ld_\mu=\sup_{\ld\in\Sigma_\mu}\ld.\ee It
follows from Lemma \ref{lb10} that there exists a $C_0>0$
independent of $\mu$ and $Q_1$, such that
 \be\label{b46}\f{\max\{Q_1^2,(Q-Q_1)^2\}}{2H^2}+gH\leq\bar\ld_\mu\leq C_0.\ee

Next, we can establish the almost continuous fit conditions to the
impinging jet flow under gravity.
\begin{prop}\label{lb11}(almost continuous fit conditions) For any $\mu>1$,  there exist a $\bar Q_{1,\mu}\in\left[0,Q\right]$ and a
$\bar\ld_\mu\geq\f{\max\{{\bar Q_{1,\mu}}^2,(Q-\bar
Q_{1,\mu})^2\}}{2H^2}+gH$ , such that

(1) $k_{1,\bar\ld_{\mu},\bar Q_{1,\mu},\mu}(H)\leq-1$ and
$k_{2,\bar\ld_{\mu},\bar Q_{1,\mu},\mu}(H)\geq1$.

(2) $k_{1,\bar\ld_{\mu},\bar Q_{1,\mu},\mu}(H)=-1$ or
$k_{2,\bar\ld_{\mu},\bar Q_{1,\mu},\mu}(H)=1$.

(3) $k_{2,\bar \ld_{\mu},\bar Q_{1,\mu},\mu}(H)=1$ for $\bar
Q_{1,\mu}<Q$.

(4) $k_{1,\bar\ld_{\mu},\bar Q_{1,\mu},\mu}(H)=-1$ for $\bar
Q_{1,\mu}>0$.

\end{prop}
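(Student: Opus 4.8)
The plan is to keep $\bar\ld_\mu=\sup_{\ld\in\Sigma_\mu}\ld$ as already fixed in \eqref{b45} — finite and subject to \eqref{b46} thanks to Lemma \ref{lb9} (Fact 3) and Lemma \ref{lb10} (Fact 4) — and to manufacture the companion parameter $\bar Q_{1,\mu}$ by a limiting procedure. First I would pick a maximizing sequence $\ld_n\in\Sigma_\mu$ with $\ld_n\uparrow\bar\ld_\mu$, together with $Q_{1,n}\in(0,Q)$ realizing the strict overshoot \eqref{b240}, i.e. $k_{1,\ld_n,Q_{1,n},\mu}(H)<-1$ and $k_{2,\ld_n,Q_{1,n},\mu}(H)>1$. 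Since $Q_{1,n}\in[0,Q]$, after passing to a subsequence $Q_{1,n}\to\bar Q_{1,\mu}\in[0,Q]$, and the admissibility constraint $\ld_n\geq\f{\max\{Q_{1,n}^2,(Q-Q_{1,n})^2\}}{2H^2}+gH$ passes to the limit. Applying the continuous dependence of the minimizer and of the initial points of the free boundaries (Lemma \ref{lb7}, Fact 1) along $(\ld_n,Q_{1,n})\to(\bar\ld_\mu,\bar Q_{1,\mu})$ then yields $k_{1,\bar\ld_\mu,\bar Q_{1,\mu},\mu}(H)\leq-1$ and $k_{2,\bar\ld_\mu,\bar Q_{1,\mu},\mu}(H)\geq1$, which is precisely (1).

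The engine behind (2)--(4) is a consequence of $\bar\ld_\mu$ being a supremum, which I would isolate as a ``no interior double overshoot'' principle: for no admissible interior value $Q_1\in(0,Q)$ can both strict inequalities hold at $\ld=\bar\ld_\mu$. Indeed, if $k_{1,\bar\ld_\mu,Q_1,\mu}(H)<-1$ and $k_{2,\bar\ld_\mu,Q_1,\mu}(H)>1$ with $Q_1\in(0,Q)$, then raising $\ld$ keeps the pair admissible (the constraint only relaxes) and, by continuity in $\ld$ from Lemma \ref{lb7}, preserves both strict inequalities for $\ld$ slightly above $\bar\ld_\mu$; such $\ld$ then lies in $\Sigma_\mu$, contradicting \eqref{b45}.

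To extract the precise equalities I would freeze $\ld=\bar\ld_\mu$ and regard $Q_1\mapsto k_{1,\bar\ld_\mu,Q_1,\mu}(H)$ and $Q_1\mapsto k_{2,\bar\ld_\mu,Q_1,\mu}(H)$ as continuous (Lemma \ref{lb7}) and non-increasing (Lemma \ref{lb8}, Fact 2) functions on the admissible $Q_1$-interval. Monotonicity makes $\{k_1(H)\leq-1\}$ an upper interval and $\{k_2(H)\geq1\}$ a lower interval, whose intersection contains $\bar Q_{1,\mu}$ and is therefore a nonempty closed interval $[a,b]$; at each of its points in $(0,Q)$ the no-double-overshoot principle forces $k_1(H)=-1$ or $k_2(H)=1$. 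Since $\{k_1(H)=-1\}\cap[a,b]$ is a closed subinterval meeting the end $a$ and $\{k_2(H)=1\}\cap[a,b]$ a closed subinterval meeting the end $b$, connectedness of $[a,b]\cap(0,Q)$ forbids these two closed pieces from being disjoint, so they overlap at a value where both equalities hold. Choosing $\bar Q_{1,\mu}$ there gives the full fit, hence (2), (3) and (4) simultaneously.

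The main obstacle — and the reason the statement is only an \emph{almost} continuous fit — is the degenerate case in which the fit interval $[a,b]$ collapses to a single boundary value $0$ or $Q$. If it collapses to $\bar Q_{1,\mu}=0$, then $Q_1$ cannot be lowered further, one still has $k_1(H)\leq-1$ from (1), while $b=0$ (so $k_2(H)<1$ for $Q_1>0$) combined with continuity forces $k_2(H)=1$ at $Q_1=0$; this yields (2) and (3) with (4) vacuous, and symmetrically $\bar Q_{1,\mu}=Q$ yields (2) and (4) with (3) vacuous. Throughout, care is required because the constraint $\ld\geq\f{\max\{Q_1^2,(Q-Q_1)^2\}}{2H^2}+gH$ bounds how far $Q_1$ may be perturbed at the fixed level $\bar\ld_\mu$, so the endpoints of the admissible $Q_1$-interval must be tracked against the thresholds $0$ and $Q$. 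Reconciling this constraint with the perturbation arguments — and deferring the exclusion of the boundary cases $\bar Q_{1,\mu}\in\{0,Q\}$ to the next section, where the truncation $\mu\to\infty$ is removed and genuine continuous fit is recovered — is the delicate part of the proof.
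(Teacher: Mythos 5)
Your step (1) and your ``no interior double overshoot'' principle are exactly the paper's proofs of parts (1) and (2): a maximizing sequence $(\ld_n,Q_{1,n})$ in $\Sigma_\mu$, compactness plus the continuous dependence of Lemma \ref{lb7}, and then an upward perturbation of $\ld$ to contradict the definition \eqref{b45}. For (3) and (4), however, you take a genuinely different route. The paper argues by contradiction and proves a \emph{strict} monotonicity claim (\eqref{b51}): once $k_{1,\bar\ld_\mu,\bar Q_{1,\mu},\mu}(H)=-1$, then $k_{1,\bar\ld_\mu,Q_1,\mu}(H)<-1$ for every $Q_1>\bar Q_{1,\mu}$. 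That claim is the technical heart of the proof: it uses continuous fit $\Rightarrow$ smooth fit at $A_1$, disjointness of the two free boundaries via Hopf's lemma, and a $(1+\eta)$-comparison near $A_1$ (the interior ball condition being unavailable there), and it is precisely what upgrades the weak monotonicity of Lemma \ref{lb8} into the double \emph{strict} overshoot needed before perturbing $\ld$ above $\bar\ld_\mu$. You replace all of this by a connectedness argument on the fit interval $[a,b]$, which would be a substantial simplification if it were complete.

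It is not complete. The gap is your assertion that $\{k_1(H)=-1\}\cap[a,b]$ is a closed subinterval \emph{meeting the end $a$} (and symmetrically that $\{k_2(H)=1\}\cap[a,b]$ meets $b$). This follows from continuity and monotonicity only if there exist \emph{admissible} values $Q_1<a$ at the level $\ld=\bar\ld_\mu$ (where then $k_1(H)>-1$); it fails when $a$ lies on the boundary of the admissible $Q_1$-range, i.e.\ at $Q_1=0$ or at the point where $\bar\ld_\mu=\f{\max\{Q_1^2,(Q-Q_1)^2\}}{2H^2}+gH$ becomes an equality. In those cases $\{k_1(H)=-1\}\cap[a,b]$ can be empty, so $k_1(H)<-1$ on the whole fit interval, and then (4) is unattainable for every $\bar Q_{1,\mu}>0$; your degenerate-case discussion (collapse of $[a,b]$ to $\{0\}$ or $\{Q\}$) does not cover this, since $[a,b]$ can be non-degenerate. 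When $a=0$ the repair is easy: your no-double-overshoot argument gives $k_2(H)\equiv1$ on the fit interval, so $\bar Q_{1,\mu}=0$ satisfies (1)--(3) and (4) is vacuous. But when $a$ is a \emph{positive} admissibility endpoint one must \emph{exclude} the scenario rather than work around it, and that needs an idea absent from your sketch: tightness of the constraint at $Q_1=a<Q/2$ forces, via \eqref{b3} and Proposition \ref{lb0}, $h_2=H$, hence by Remark \ref{rb2} an empty right free boundary, so that $k_{2,\bar\ld_\mu,a,\mu}(H)$ equals the truncation edge $\mu>1$, contradicting the identity $k_2(H)(a)=1$ that your own argument produces on the fit interval. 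Until such cases are closed (or the paper's claim \eqref{b51} is invoked instead), your proof of (3)--(4) is incomplete.
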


\begin{remark} Obviously, as long as the critical cases $\bar Q_{1,\mu}=0$ and $\bar Q_{1,\mu}=Q$ are excluded, then
the continuous fit conditions
$$\text{$k_{1,\bar\ld_{\mu},\bar Q_{1,\mu},\mu}(H)=-1$ and
$k_{2,\bar\ld_{\mu},\bar Q_{1,\mu},\mu}(H)=1$}$$ are satisfied. This
will be done in the next section.

\end{remark}

\begin{proof} (1). By using the similar arguments in Lemma \ref{lb7},
 taking a sequence $\{\ld_n\}$ with $\ld_n\in\Sigma_\mu$ and
$\ld_n\uparrow \bar\ld_{\mu}$, we can obtain from the definition of
$\Sigma_\mu$ that there exists a sequence $\{Q_{1,n}\}$ with
$Q_{1,n}\in(0,Q)$, such that
\be\label{b470}\text{$k_{1,\ld_n,Q_{1,n},\mu}(H)<-1\ $ and $\
k_{2,\ld_n,Q_{1,n},\mu}(H)>1$}.\ee

It follows from the similar arguments in the proof of Lemma
\ref{lb7} that there exist two subsequences $\{\ld_n\}$ and
$\{Q_{1,n}\}$, such that
$$\ld_{n}\rightarrow \bar\ld_{\mu},\ Q_{1,n}\rightarrow \bar Q_{1,\mu},$$ and
$$\psi_{\ld_{n},Q_{1,n},\mu}\rightharpoonup\psi_{\bar\ld_{\mu},\bar Q_{1,\mu},\mu}\ \ \text{in $H^1(\O_\mu)$ and uniformly in any compact subset of $\O_\mu$},$$
as $n\rightarrow+\infty$. Furthermore,
$$\lim_{n\rightarrow+\infty}k_{1,\ld_{n},Q_{1,n},\mu}(H)=k_{1,\bar\ld_{\mu},\bar Q_{1,\mu},\mu}(H)
\ \text{and}\
\lim_{n\rightarrow+\infty}k_{2,\ld_{n},Q_{1,n},\mu}(H)=k_{2,\bar\ld_{\mu},\bar
Q_{1,\mu},\mu}(H).$$ Therefore, the assertion (1) follows from
\eqref{b470}.
%  \be\label{b48}k_{1,\bar\ld_{\mu},\bar
%Q_{1,\mu},\mu}(H)\leq -1\ \ \text{and}\ \ k_{2,\bar\ld_{\mu},\bar
%Q_{1,\mu},\mu}(H)\geq 1.\ee

(2). Suppose that the assertion (2) is not true. Then it follows
from the assertion (1) that \be\label{b49}k_{1,\bar\ld_{\mu},\bar
Q_{1,\mu},\mu}(H)<- 1\ \ \text{and}\ \ k_{1,\bar\ld_{\mu},\bar
Q_{1,\mu},\mu}(H)>1.\ee Due to the continuous dependence of
$k_{i,\ld,Q_1,\mu}(H)$ on $\ld$ and $Q_1$ in Lemma \ref{lb7}, there
exist $\ld'>\bar\ld_\mu$ and $Q_1'\in\left(0,Q\right)$ with
$\ld'-\bar\ld_\mu$ and $|\bar Q_{1,\mu}-Q_1'|$ small enough, such
that
$$k_{1,\ld',
Q_1',\mu}(H)<-1\ \ \text{and}\ \ k_{2,\ld', Q_1',\mu}(H)>1.$$ Then
we have that $\ld'\in \Sigma_\mu$, which contradicts to the
definition of $\bar\ld_\mu$ in \eqref{b45}.

%(3). First, we will show that \be\label{b53}\text{ if $\bar
%Q_{1,\mu}<Q$, then
% $k_{2,\bar\ld_{\mu},\bar
%Q_{1,\mu},\mu}(H)=1$}.\ee

(3). Suppose that the opposite is true. Then
$k_{2,\bar\ld_{\mu},\bar Q_{1,\mu},\mu}(H)>1$ for $\bar
Q_{1,\mu}<Q$, and the assertion (2) implies that
$k_{1,\bar\ld_{\mu},\bar Q_{1,\mu},\mu}(H)=-1$. Due to Lemma
\ref{lb8}, it holds that
$$\psi_{\bar\ld_{\mu},Q_1,\mu}(x,y)\geq\psi_{\bar\ld_{\mu},\bar
Q_{1,\mu},\mu}(x,y)\ \ \text{in}\ \ \O_\mu,$$ for any $Q_1\in(\bar
Q_{1,\mu},Q)$. Denote $$\text{ $\psi=\psi_{\bar\ld_{\mu},\bar
Q_{1,\mu},\mu}$, $\psi_1=\psi_{\bar\ld_{\mu},Q_1,\mu}(x,y)$,
$k_{i,\bar Q_{1,\mu}}(y)=k_{i,\bar\ld_{\mu},\bar Q_{1,\mu},\mu}(y)$
and $k_{i,Q_1}(y)=k_{i,\bar\ld_{\mu},Q_1,\mu}(y)$}$$ for $i=1,2$ in
the following. Therefore, $k_{1,Q_1}(H)\leq k_{1,\bar
Q_{1,\mu}}(H)=-1$. Next, we claim that \be\label{b51}k_{1,Q_1}(H)<
k_{1,\bar Q_{1,\mu}}(H)= -1,\ee for any $Q_1\in(\bar Q_{1,\mu},Q)$.
Suppose that there exists a $Q_1\in(\bar Q_{1,\mu},Q)$ such that
$k_{1,Q_1}(H)= k_{1,\bar Q_{1,\mu}}(H)$. It follows from the results
in Section 9 in \cite{ACF3} and Section 11 in Chapter 3 in
\cite{FA1} that the continuous fit condition implies the smooth fit
condition. Hence, $N_{1,\mu}\cup\Gamma_{1,\bar\ld_{\mu},\bar
Q_{1,\mu},\mu}$ and $N_{1,\mu}\cup\Gamma_{1,\bar\ld_{\mu},Q_1,\mu}$
are $C^1$ at $A_1$. Furthermore, $\g\psi$ is uniformly continuous in
a $\{\psi>0\}$-neighborhood of $A_1$, and $\g\psi_1$ is uniformly
continuous in a $\{\psi_1>0\}$-neighborhood of $A_1$, namely,
$$-\sqrt{2\ld-2gH}=\f{\p\psi}{\p\nu}=\f{\p\psi_1}{\p\nu}=-\sqrt{2\ld-2gH}\ \ \
\text{at $A_1$,}$$ where $\nu$ is the outer normal vector. It should
be noted that it's difficult to verify the inner ball property at
the point $A_1$, then one can not apply the Hopf's lemma at $A_1$.

First, we show that \be\label{b90}\Gamma_{1,\bar\ld_\mu,\bar
Q_{1,\mu},\mu}\cap\Gamma_{1,\bar\ld_\mu,Q_1,\mu}=\varnothing.\ee
Suppose not, there exists a
$X_0=(x_0,y_0)\in\Gamma_{1,\bar\ld_\mu,\bar
Q_{1,\mu},\mu}\cap\Gamma_{1,\bar\ld_\mu,Q_1,\mu}$. Since
$\Gamma_{1,\bar\ld_\mu,\bar Q_{1,\mu},\mu}$ and
$\Gamma_{1,\bar\ld_\mu,Q_1,\mu}$ are analytic at $X_0$, Hopf's lemma
gives that $$-\sqrt{2\ld-2g
y_0}=\f{\p\psi_1}{\p\nu}<\f{\p\psi}{\p\nu}=-\sqrt{2\ld-2g y_0}\ \
\text{at}\ \ X_0,$$ where $\nu$ is the outer normal vector, which
leads to a contradiction.

The continuity of $\psi$ and $\psi_1$ implies that
$$\psi<Q\ \ \text{and}\ \ \psi_1<Q\ \ \text{in
$B_r(A_1)$},$$ for any small $r>0$.

Let $\mathcal{M}=B_r(A_1)\cap N_{1,\mu}$. It follows from the strong
maximum principle that \be\label{b901}\psi<\psi_1\ \text{in
$\overline{B_r(A_1)}\cap\{\psi>0\}$}.\ee Since $N_{1,\mu}$ are
$C^{2,\alpha}$ and $\psi=\psi_1=0$ on $\mathcal{M}$, thanks to
Hopf's lemma, one has
$$\f{\p\psi_1}{\p\nu}<\f{\p\psi}{\p\nu}\ \ \text{on}\ \ \mathcal{M},$$ where $\nu$ is the outer normal vector of $\mathcal{M}$.
It follows from \eqref{b901} that there exists a small $\eta_1>0$,
such that $$\psi_1\geq(1+\eta_1)\psi\ \ \text{on $\p
B_r(A_1)\cap\{\psi>0\}$}
$$This, together with \eqref{b90} and \eqref{b901}, implies that there exists a small
$\eta\in(0,\eta_1)$ such that
$$\psi_1\geq(1+\eta)\psi\ \ \text{on $\p
(B_r(A_1)\cap\{\psi>0\})$}.$$ It follows from the maximum principle
that
$$\psi_1>(1+\eta)\psi\ \ \text{in
$B_r(A_1)\cap\{\psi>0\}$,}$$ which together with $\psi_1=\psi=0$ at
$A_1$ gives that
$$-\sqrt{2\ld-2gH}=\f{\p\psi_1}{\p\nu}\leq(1+\eta)\f{\p\psi}{\p\nu}=-(1+\eta)\sqrt{2\ld-2gH}\ \ \
\text{at $A_1$,}$$ which leads to a contradiction. Thus, the claim
\eqref{b51} holds true.

Since $k_{2,\bar Q_{1,\mu}}(H)>1$, by using the continuous
dependence of $k_{2,\bar\ld_\mu,Q_1,\mu}(H)$ with respect to $Q_1$,
we have \be\label{b52}k_{2,Q_1}(H)>1\ \ \text{for small $Q_1-\bar
Q_{1,\mu}>0$}.\ee In view of \eqref{b51} and \eqref{b52}, we can
obtain a contradiction to the definition of $\bar\ld_\mu$ by using
the continuous dependence of $k_{i,Q_1}(H)$ with respect to $\ld$
and $Q_1$.

(4). Similar to (3), one can show that
$$\text{ if $\bar Q_{1,\mu}>0$, then $k_{1,\bar\ld_{\mu},\bar
Q_{1,\mu},\mu}(H)=-1$}.$$

\end{proof}

\section{The existence of the impinging flow problem}
In this section, we will give the existence of the impinging flow
problem based on the previous results.

\begin{proposition}\label{lc1} For any $Q>2\sqrt{gH^3}$, there exist a pair $(\ld,Q_1)$ and a solution $\psi_{\ld,Q_1}$ to the free boundary problem \eqref{b6}
with $Q_1\in[0,Q]$ and
$\ld\geq\f{\max\{Q_1^2,(Q-Q_1)^2\}}{2H^2}+gH$. Moreover,

(1) $\psi_{\ld,Q_1}(x,y)$ is increasing with respect to $x$ and the
free boundary $\Gamma_{i,\ld,Q_1}$ is analytic. Furthermore, the
free boundary $\Gamma_{i,\ld,Q_1}$ can be described by a continuous
function $x=k_{i,\ld,Q_1}(y)$ for $y\in(h_i,H)$, respectively,
$i=1,2$.

(2) (almost continuous fit conditions) $k_{1,\ld,Q_{1}}(H)\leq -1$
and $k_{2,\ld,Q_{1}}(H)\geq 1$.
 Furthermore,
$$
k_{1,\lambda,Q_1}(H)=-1\quad{\text or}\quad k_{2,\lambda,Q_1}(H)=1,
$$and
 $$\text{
$k_{1,\ld,Q_{1}}(H)=-1$ if $Q_1>0$, and $ k_{2,\ld,Q_{1}}(H)=1$ if
$Q_1<Q$.}$$

\end{proposition}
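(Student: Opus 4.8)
The plan is to obtain $(\ld,Q_1)$ and $\psi_{\ld,Q_1}$ as a limit of the truncated minimizers $\psi_{\bar\ld_\mu,\bar Q_{1,\mu},\mu}$ furnished by Proposition \ref{lb11}, letting $\mu\to+\infty$, and then to verify that every property established for the truncated problem survives the limit. First I would fix a sequence $\mu_n\to+\infty$. By the uniform bound \eqref{b46} on $\bar\ld_\mu$ together with $\bar Q_{1,\mu}\in[0,Q]$, one passes to a subsequence (still denoted $\mu_n$) so that $\bar\ld_{\mu_n}\to\ld$ and $\bar Q_{1,\mu_n}\to Q_1$, and the constraint $\ld\geq\f{\max\{Q_1^2,(Q-Q_1)^2\}}{2H^2}+gH$ is preserved in the limit. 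Since $\ld$ stays bounded, Lemma \ref{lb60} gives a locally uniform Lipschitz bound for $\psi_n:=\psi_{\bar\ld_{\mu_n},\bar Q_{1,\mu_n},\mu_n}$, independent of $n$. Hence, by Arzel\`a--Ascoli and weak $H^1_{loc}$ compactness, a further subsequence converges, $\psi_n\to\psi_{\ld,Q_1}$ locally uniformly and weakly in $H^1_{loc}(\R^2)$ (the diagonal extraction over an exhaustion of $\O$ being standard).

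Next I would show that $\psi_{\ld,Q_1}$ solves the free boundary problem \eqref{b6}. This is exactly the content of Steps 1--4 in the proof of Lemma \ref{lb7}, now carried out as the truncation is removed rather than with the domain fixed; since all the estimates there are local and uniform in $\mu$ and the domains $\O_{\mu_n}$ exhaust $\O$, the arguments transfer. Concretely, one establishes (i) the Hausdorff convergence $\p\{0<\psi_n<Q\}\to\p\{0<\psi_{\ld,Q_1}<Q\}$ on every compact subset of $\O$, (ii) $\chi_{\{0<\psi_n<Q\}}\to\chi_{\{0<\psi_{\ld,Q_1}<Q\}}$ in $L^1_{loc}$, and (iii) $\g\psi_n\to\g\psi_{\ld,Q_1}$ a.e.\ in $\O$. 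These imply that $\psi_{\ld,Q_1}$ is a local minimizer of $\int|\g\psi|^2+(2\ld-2gy)\chi_{\{0<\psi<Q\}\cap D}$ on every $K\Subset\O$; in particular it is harmonic in $\{0<\psi_{\ld,Q_1}<Q\}$, and by the regularity theory in Section 8 of \cite{AC1} its free boundaries are analytic and satisfy $|\g\psi_{\ld,Q_1}|=\sqrt{2\ld-2gy}$, which are precisely the free boundary conditions in \eqref{b6}. The Dirichlet data $\psi_{\ld,Q_1}=0$ on $N_1\cup\Gamma_1$, $\psi_{\ld,Q_1}=Q$ on $N_2\cup\Gamma_2$ and $\psi_{\ld,Q_1}=Q_1$ on $N$ follow from the corresponding conditions for $\psi_n$ and from $\bar Q_{1,\mu_n}\to Q_1$.

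For assertion (1), the monotonicity in $x$ passes to the limit, since a pointwise limit of functions increasing in $x$ is again increasing in $x$ (no uniqueness argument as in Proposition \ref{lb4} is needed here). The continuous graph functions $k_{i,\ld,Q_1}(y)$ on $(h_i,H)$ then follow from this monotonicity together with the non-oscillation Lemma \ref{lb5} and Remark \ref{rl4}, exactly as in the proof of Lemma \ref{lb6}.

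The decisive point is assertion (2), the almost continuous fit, and the main obstacle is the convergence of the free-boundary initial points, namely $k_{i,\bar\ld_{\mu_n},\bar Q_{1,\mu_n},\mu_n}(H)\to k_{i,\ld,Q_1}(H)$ for $i=1,2$, which must be proved while both the parameters and the truncation vary. The key observation is that the domains $\O_{\mu_n}$ coincide with $\O$ in a fixed neighborhood of the endpoints $A_1=(-1,H)$ and $A_2=(1,H)$, so this is a local statement near $y=H$; the Cauchy--Kovalevskaya theorem, unique continuation, the non-oscillation lemma and the uniform Lipschitz and analyticity estimates invoked in Step 5 of the proof of Lemma \ref{lb7} then apply without change and yield the convergence. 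Granting it, the inequalities $k_{1,\mu_n}(H)\leq-1$ and $k_{2,\mu_n}(H)\geq1$ of Proposition \ref{lb11}(1) pass to the limit. Passing to a further subsequence, Proposition \ref{lb11}(2) gives $k_{1,\mu_n}(H)=-1$ for all $n$ or $k_{2,\mu_n}(H)=1$ for all $n$, which yields the alternative in the limit. Finally, if $Q_1>0$ then $\bar Q_{1,\mu_n}>0$ for all large $n$, so Proposition \ref{lb11}(4) gives $k_{1,\mu_n}(H)=-1$, hence $k_{1,\ld,Q_1}(H)=-1$; symmetrically, if $Q_1<Q$ then Proposition \ref{lb11}(3) gives $k_{2,\ld,Q_1}(H)=1$. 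The convergence of the initial points is the essential difficulty, everything else being inherited from the truncated estimates of Section~2.
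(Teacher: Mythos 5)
Your proposal is correct and takes essentially the same route as the paper: both pass to the limit $\mu_n\to\infty$ of the truncated minimizers furnished by Proposition \ref{lb11}, re-run the convergence machinery of Lemma \ref{lb7} (Steps 1--4 to get local minimality, harmonicity, analyticity and the free boundary condition; Step 5 with Cauchy--Kovalevskaya for the convergence of the initial points $k_{i,n}(H)$), and then transfer the almost-continuous-fit properties of Proposition \ref{lb11} to the limit. The only differences are presentational --- the paper proves the fit conditions \eqref{c1} by contradiction and cites Proposition \ref{lb4} for monotonicity, whereas you pass the initial-point convergence and the monotonicity in $x$ to the limit directly --- but the underlying argument is the same.
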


\begin{proof}
Let $\{\mu_n\}$ be a sequence such that $\mu_n\rightarrow+\infty$.
It follows from the similar arguments in the proof of Lemma
\ref{lb7} that
$$\bar\ld_{\mu_n}\rightarrow \ld\ \ \text{and} \ \ \bar Q_{1,\mu_n}\rightarrow
Q_1,$$ and
$$\psi_{\bar\ld_{\mu_n},\bar Q_{1,\mu_n},\mu_n}\rightarrow\psi_{\ld,Q_1}
\ \text{weakly in $H_{loc}^1(\O)$ and uniformly in any compact
subset of $\O$}.$$ It follows from \eqref{b46} that
\be\label{c3}\ld\leq C_0.\ee

Next, we will show that $\psi_{\ld,Q_1}$ is a local minimizer to the
variational problem $(P_{\ld,Q_1})$ (see Remark \ref{rl1}). Set
$\psi_n=\psi_{\bar\ld_{\mu_n},\bar Q_{1,\mu_n},\mu_n}$. We first
claim that
 \be\label{c4}\ba{rl}&\int_{E}|\nabla \psi_n|^2+(2\bar\lambda_{\mu_n}-2gy)\chi_{\{0<\psi_n<Q\}\cap D} dxdy\\
 \leq&\int_{E}|\nabla
\t\psi|^2+(2\bar\lambda_{\mu_n}-2gy)\chi_{\{0<\t\psi<Q\}\cap D}
dxdy,\ea\ee for any $\t\psi\in H^1(E)$ and $\t\psi=\psi_n$ on $\p
E$, provided that $n$ is sufficiently large, where $E$ is any
compact subset of $\O$. In fact, there exists a $N>0$, such that
$E\subset\O_{\mu_n}$ for any $n>N$. Extend $\t\psi=\psi_n$ outside
of $E$ such that $\t\psi=\Psi_{\bar\ld_{\mu_n},\mu_n}(y)$ on
$\p\O_{\mu_n}$, then it implies that \eqref{c4} is valid.

%For any $\psi\in H^1(E)$ with $\psi=\psi_{\ld,Q_1}\ \text{on}\ \p
%E$, set
%$$\t\psi_n=\psi+(1-\eta)(\psi_n-\psi_{\ld,Q_1}),$$ where $\eta\in C_0^1(D)$
%and $0\leq\eta\leq 1$. Obviously, $\t\psi_n\in H^1(E)$ and
%$\t\psi_n=\psi_n$ on $\p E$.

%It follows from \eqref{c4} that
%$$\int_{E}|\g\psi_{n}|^2+(\ld_{L_n}-2gy)I_{\{0<\psi_{n}<Q\}\cap D}
%dxdy \leq\int_{E}|\g
%\t\psi_n|^2+(\ld_{L_n}-2gy)I_{\{0<\t\psi_n<Q\}\cap D} dxdy.$$

With the aid of \eqref{c4}, by using the similar arguments in Step 4
in the proof of Lemma \ref{lb7}, one can conclude that
$\psi_{\ld,Q_1}$ is a local minimizer to the variational problem
$(P_{\ld,Q_1})$, and thus it follows from Proposition \ref{lb1} that
$\psi_{\ld,Q_1}$ is harmonic in $\O\cap\{0<\psi_{\ld,Q_1}<Q\}$ and
the free boundary $\Gamma_{i,\ld,Q_1}$ is analytic with
$|\g\psi_{\ld,Q_1}|=\sqrt{2\ld-2gy}$ on $\Gamma_{i,\ld,Q_1}$.
%$$\int_{E}|\g\psi_{\ld,Q_1}|^2+(\ld-2gy)
%I_{\{0<\psi_{\ld,Q_1}<Q\}\cap D} dxdy \leq\int_{E}|\g
%\psi|^2+(\ld-2gy) I_{\{0<\psi<Q\}\cap D} dxdy,$$ for any $\psi\in
%H^1(E)$ with $\psi=\psi_{\ld,Q_1}\ \text{on}\ \p E$.
Furthermore, Lemma \ref{lb4} gives that the local minimizer
$\psi_{\ld,Q_1}(x,y)$ is increasing with respect to $x$. By using
similar arguments as in Lemma \ref{lb6}, one can conclude that the
free boundary $\Gamma_{i,\ld,Q_1}$ of $\psi_{\ld,Q_1}$ can be
described by a generalized continuous function $x=k_{i,\ld,Q_1}(y)$
($i=1,2$), namely, $k_{i,\ld,Q_1}(y+0)=\lim_{t\rightarrow
y+0}k_{i,\ld,Q_1}(t)$ and $k_{i,\ld,Q_1}(y-0)=\lim_{t\rightarrow
y-0}k_{i,\ld,Q_1}(t)$ exist and may be infinite, and
$k_{i,\ld,Q_1}(y+0)=k_{i,\ld,Q_1}(y-0)\in[-\infty,+\infty]$ for and
$y\in(h_i,H)$.

Next, we will show that \be\label{c1}\text{ $k_{1,\ld,Q_{1}}(H)=-1$
for $Q_1>0$, and $k_{2,\ld,Q_{1}}(H)=1$ for $Q_1<Q$}.\ee If not,
without loss of generality, one may assume that
$k_{1,\ld,Q_{1}}(H)\neq-1$ for $Q_1>0$. Since $Q_{1}>0$, so $\bar
Q_{1,\mu_n}>0$ for sufficiently large $n$, and it follows from Lemma
\ref{lb11} that $k_{1,\bar\ld_{\mu_n},\bar
Q_{1,\mu_n},\mu_n}(H)=-1$. Similar to the Step 5 in the proof of
Lemma \ref{lb7}, one can obtain a contradiction by using the
convergence of the free boundary and the Cauchy-Kovalevskaya
theorem.

\end{proof}

Next, $k_{i,\ld,Q_1}(y)$ has the following properties for $i=1,2$.

\begin{prop}\label{lc2} The free boundary $\Gamma_{i,\ld,Q_1}: x=k_{i,\ld,Q_1}(y)$ is a
bounded continuous function for any $x\in(h_i,H]$, where $h_i$ is
 determined uniquely by \eqref{b3} for $i=1,2$.
 Furthermore,$$\text{
 $\lim_{y\rightarrow
h_1^+}k_{1,\ld,Q_1}(y)=-\infty$ if $Q_1>0$, and $\lim_{y\rightarrow
h_2^+}k_{2,\ld,Q_1}(y)=+\infty$ if $Q_1<Q$.}$$

\end{prop}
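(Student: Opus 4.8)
\emph{Overview and reductions.} The plan is to treat continuity, finiteness, and the blow-up at the lower endpoint $h_i$ separately, relying throughout on the monotonicity of $\psi:=\psi_{\ld,Q_1}$ in $x$ (Proposition~\ref{lb4}/\ref{lc1}), the analyticity of $\Gamma_{i,\ld,Q_1}$, and the a priori bounds of Lemma~\ref{lb3}, which pass to the limit and give $\psi<Q$ for $y<h_2$ and $\psi>0$ for $y<h_1$; in particular the graphs $k_i$ are genuinely defined only for $y>h_i$. Since by monotonicity in $x$ one has $k_2(y)=\sup\{x:\psi(x,y)<Q\}$ and $k_1(y)=\inf\{x:\psi(x,y)>0\}$, the whole task reduces to understanding these $\sup/\inf$ as functions of $y$. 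I will write the argument only for $k_2$ under $Q_1<Q$; the statement for $k_1$ is symmetric, using $\psi=Q$ on $N_2$ in place of $\psi=0$ on $N_1$.

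\emph{Continuity.} Proposition~\ref{lc1} already furnishes $k_2$ as a generalized continuous function, i.e. the one-sided limits $k_2(y_0\pm0)$ exist in $[-\infty,+\infty]$ and coincide. To upgrade this to genuine continuity at an interior height $y_0\in(h_2,H)$ I would rule out a finite jump exactly as in Lemma~\ref{lb6}: were $k_2(y_0-0)>k_2(y_0)$, monotonicity of $\psi$ in $x$ would force an arc $\{(x,y_0):x_1<x<x_2\}\subset\Gamma_{2,\ld,Q_1}$ to be horizontal and analytic, on which the free boundary condition reads $\p\psi/\p y=\sqrt{2\ld-2gy_0}$ with $\psi=Q$; the Cauchy–Kovalevskaya theorem then gives $\psi(x,y)=\sqrt{2\ld-2gy_0}\,(y-y_0)+Q$ in a one-sided neighborhood, and unique continuation for the harmonic $\psi$ spreads this affine profile over the connected fluid region, contradicting $\psi=0$ on $N_1$ (whose heights satisfy $y\ge H>y_0$). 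The same mechanism handles the limit at $y=H$.

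\emph{Blow-up at $h_2$ (the crux).} Suppose instead that $\lim_{y\to h_2^+}k_2(y)=:L\ne+\infty$ (so $L$ is finite or $-\infty$). Choosing $x_*>\max\{L,0\}$, for $y$ slightly above $h_2$ one has $k_2(y)<x_*$, hence $\psi(x_*,y)=Q$, while $\psi(x_*,y)<Q$ for $y<h_2$ by Lemma~\ref{lb3}; continuity of $\psi$ then shows that the whole horizontal ray $\{(x,h_2):x>x_*\}$ lies in $\Gamma_{2,\ld,Q_1}$. Since this arc is analytic and horizontal, the free boundary condition gives $\p\psi/\p y=\sqrt{2\ld-2gh_2}$ there, and Cauchy–Kovalevskaya yields $\psi(x,y)=\sqrt{2\ld-2gh_2}\,(y-h_2)+Q$ in a neighborhood below the ray. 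As $\psi$ is real-analytic (harmonic) on the connected fluid region $\O\cap\{0<\psi<Q\}$ (connectedness being the limit of \eqref{b182}), the identity theorem forces this affine profile to hold throughout the fluid region; but then $\psi=0$ on $N_1$ would require $\sqrt{2\ld-2gh_2}\,(y-h_2)+Q=0$ at heights $y\ge H>h_2$, which is impossible. Hence $L=+\infty$, i.e. $\lim_{y\to h_2^+}k_2(y)=+\infty$, and symmetrically $\lim_{y\to h_1^+}k_1(y)=-\infty$ when $Q_1>0$.

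\emph{Finiteness and the main obstacle.} It remains to exclude $k_2(y_0)=+\infty$ at an interior $y_0>h_2$. Here I would use monotonicity in $x$ once more: the bounded increasing family $\psi(\cdot+s,\cdot)$ converges, as $s\to+\infty$, to an $x$-independent harmonic limit, necessarily the affine downstream profile whose free level sits at $h_2$ by the Neumann condition and the flux identity \eqref{b3}; consequently $\psi(x,y_0)\to Q$ as $x\to+\infty$ for every $y_0>h_2$, and the non-degeneracy Lemma~\ref{lf3} upgrades this to $\psi(x,y_0)=Q$ for $x$ large, i.e. $k_2(y_0)<+\infty$. Combined with the continuity above, this yields the bounded continuous graph on $(h_2,H]$. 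I expect the main obstacle to be the global analytic-continuation step in the blow-up argument—verifying that the fluid region remains connected in the limit and that the locally produced affine profile genuinely propagates up to the rigid wall $N_1$—together with making the translation limit in the finiteness step rigorous without circularly invoking the full downstream asymptotics of Section~4.
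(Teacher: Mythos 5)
Your continuity step is sound (it is the argument of Lemma~\ref{lb6}), and your ``finiteness'' step gestures at the right mechanism, but the step you yourself call the crux --- ruling out $\lim_{y\to h_2^+}k_{2,\ld,Q_1}(y)\ne+\infty$ --- has a genuine gap, and it sits exactly where the paper has to work hardest. Grant your reduction: the ray $\{(x,h_2):x>x_*\}$ lies on $\Gamma_{2,\ld,Q_1}$ and Cauchy--Kovalevskaya gives $\psi=\sqrt{2\ld-2gh_2}\,(y-h_2)+Q$ in a one-sided neighborhood below it. By the flux relation \eqref{b3}, $Q-Q_1=\sqrt{2\ld-2gh_2}\,h_2$, this affine function is identically $\sqrt{2\ld-2gh_2}\,y+Q_1$, i.e.\ it is \emph{exactly} the comparison barrier of Lemma~\ref{lb3}. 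Hence the hypothetical configuration is consistent with every constraint you invoke below the line $\{y=h_2\}$: propagating the identity (for instance by the strong maximum principle, since $\psi\le\sqrt{2\ld-2gh_2}\,y+Q_1$ with interior equality) gives $\psi\equiv\sqrt{2\ld-2gh_2}\,y+Q_1$ on the whole strip $\{0<y<h_2\}$, which matches $\psi=Q_1$ on $N$ and forces $\psi\equiv Q$ on the entire line $\{y=h_2\}$. In this configuration the fluid region $\Omega\cap\{0<\psi<Q\}$ is genuinely \emph{disconnected} by that line, so the identity theorem cannot carry the affine profile across it up to $N_1$, and your contradiction with $\psi=0$ on $N_1$ never materializes. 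Your parenthetical justification, ``connectedness being the limit of \eqref{b182}'', is circular: connectivity is not preserved under locally uniform limits (the proof of \eqref{b182} uses the prescribed lateral data on $\partial\Omega_\mu$, which disappear as $\mu\to\infty$), and proving connectivity for $\psi_{\ld,Q_1}$ amounts to excluding precisely the degenerate configurations at issue. The genuine inconsistency of the scenario lives \emph{above} $y=h_2$, where the nozzle flow and the left-going flux have nowhere to go; detecting it requires the paper's machinery: translation limits $\psi_{\ld,Q_1}(x\pm n,y)$ in strips, which produce overdetermined problems such as \eqref{c11}, or flux identities of the form $Q-Q_1=\sqrt{2\ld-2g\beta}\,\beta$ at two distinct heights $\beta$, contradicting the strict monotonicity of $t\mapsto t\sqrt{2\ld-2gt}$ together with Proposition~\ref{lb0}.

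A second omission compounds this: you never exclude $k_{2,\ld,Q_1}(y_0)=-\infty$ at some $y_0\in(h_2,H)$, i.e.\ $\psi\equiv Q$ on an entire horizontal line, which disconnects the fluid region in the same way; the paper's Steps 1 and 2 (Cases 1--4), using the non-oscillation Lemma~\ref{lb5}, Cauchy--Kovalevskaya on flat level lines, and the strip limits with the identities \eqref{c15}--\eqref{c16}, are devoted to showing that the set of heights where $k_{2,\ld,Q_1}$ is finite is a \emph{single} interval $(\beta_1,H)$ with $k_{2,\ld,Q_1}\to+\infty$ as $y\downarrow\beta_1$. Only with that structure in hand does the flux identity (your finiteness mechanism, the paper's Step 3) identify $\beta_1=h_2$; likewise, your identification of the translation limit as ``the affine profile with free level $h_2$'' presupposes this structure and the $C^{2,\alpha}$ convergence up to the free boundary that the flatness theory provides. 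So the interval-structure analysis you skip is not a technical refinement but the core of the proof, and your three-step decomposition cannot be completed with the tools you list.
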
\begin{proof}Consider first that $Q_1<Q$. In this case, $h_2\in(0,H)$. It
follows from (2) in Proposition \ref{lc1} that
 $k_{2,\ld,Q_1}(H)=1$, and thus $k_{2,\ld,Q_1}(y)$ is
 finite near $y=H$. It remains to show that
\be\label{c7}\text{$k_{2,\ld,Q_1}(y)$ is continuous and finite in
$(h_2,H)$ and $\lim_{y\rightarrow
h_2^+}k_{2,\ld,Q_1}(y)=+\infty$},\ee which will be proved in three
steps.

{\bf Step 1.}
%$$\text{$x=k_{2,\ld,Q_1}(y)$ is continuous and finite value in $(-h_1,0)$ for $Q_1>0$},$$ and $$ \text{$x=k_{2,Q_1,Q_2}(y)$ is continuous and finite value in $(-h_2,0)$ for $Q_2>0$}.$$
%It follows from the results in Step 2 that $k_{2,\ld,Q_1}(0)=1$ for
%$Q_1>0$, which implies that $k_{2,\ld,Q_1}(y)$ is continuous and
%finite value in $(-\e,0]$ for small $\e>0$. We first claim that
%$$\text{$k_{2,\ld,Q_1}(y)$ is continuous and finite value in
%$(-\delta,0]$ and
%$\lim_{y\rightarrow\delta^+}k_{2,\ld,Q_1}(y)=+\infty$.}$$
Let $(\beta_i,\alpha_i)$ be the maximal intervals
($i=1,2,\cdot\cdot\cdot$), such that $k_{2,\ld,Q_1}(y)$ is finite
valued, $\alpha_1=H$ and $\beta_i\geq\alpha_{i+1}$. We first claim
that
$$\text{ the number of intervals $(\beta_i,\alpha_i)$ is
finite.}$$ If not, then $\alpha_i-\beta_i\rightarrow 0$ and
$\beta_{i}-\alpha_{i+1}\rightarrow 0$ as $i\rightarrow+\infty$.
There are the following two cases to be considered.

{\bf Case 1}.
$k_{2,\ld,Q_1}(\alpha_i-0)=k_{2,\ld,Q_1}(\beta_i+0)=-\infty$. (See
Figure \ref{f13})

\begin{figure}[!h]
\includegraphics[width=90mm]{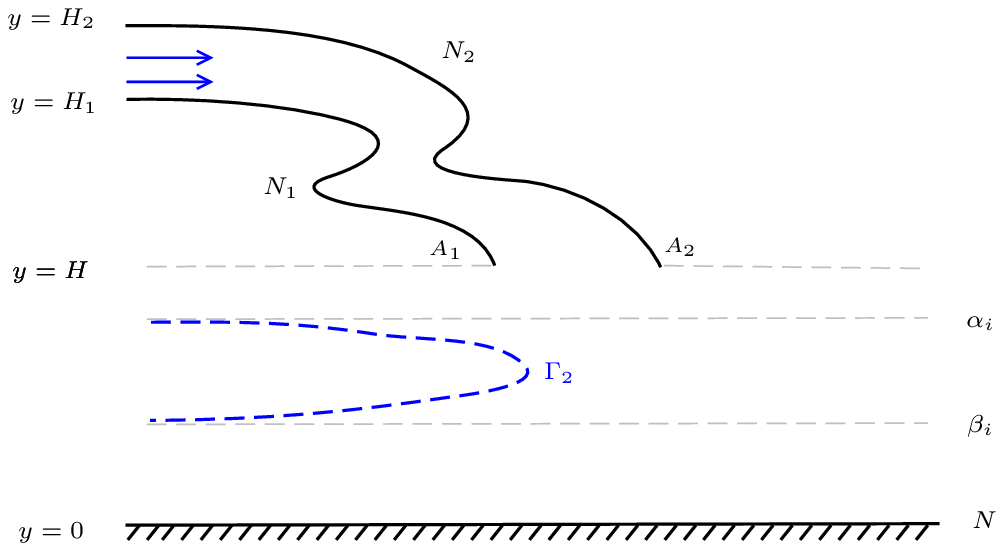}
\caption{Case 1}\label{f13}
\end{figure}

Denote $G_i\subset\{(x,y)\in D\mid x< k_{2, \ld,Q_1}(y),
\beta_i<y<\alpha_i\}$, such that $G_i$ satisfies $$\forall \
X_1=(x_1,y_1)\in G_i, \ \ \exists\ y_0\ \ \text{such that}\ \
x_1=k_{2,\ld,Q_1}(y_0) \ \ \text{and} \ \ X_0=(x_1,y_0)\in
\Gamma_{2,\ld,Q_1}.$$

Thanks to the Lipschitz continuity of $\psi_{\ld,Q_1}$, we have
$$Q-\psi_{\ld,Q_1}(X_1)=\psi_{\ld,Q_1}(X_0)-\psi_{\ld,Q_1}(X_1)\leq
C(\alpha_i-\beta_i),$$ which implies that
\be\label{c8}\text{$\psi_{\ld,Q_1}(X_1)>0$ for any $X_1\in G_i$,}\ee
provided that $\beta_i-\alpha_i$ is small enough such that
$C(\alpha_i-\beta_i)<Q$.

Hence, we can derive a contradiction to the non-oscillation Lemma
\ref{lb5} in the region $G_i\cap\{-2R<x<-R\}$ (for some $R$
sufficiently large) provided that $\alpha_i-\beta_i$ is small
enough.

{\bf Case 2}. $k_{2,\ld,Q_1}(\alpha_i-0)=+\infty$ or
$k_{2,\ld,Q_1}(\beta_i+0)=+\infty$. Without loss of generality, we
assume that $k_{2,\ld,Q_1}(\beta_i+0)=+\infty$ and consider the
following two subcases.

 {\bf Subcase 2.1}.
 $k_{2,\ld,Q_1}(\alpha_{i+1}-0)=+\infty$ (see Figure \ref{f14}).

\begin{figure}[!h]
\includegraphics[width=90mm]{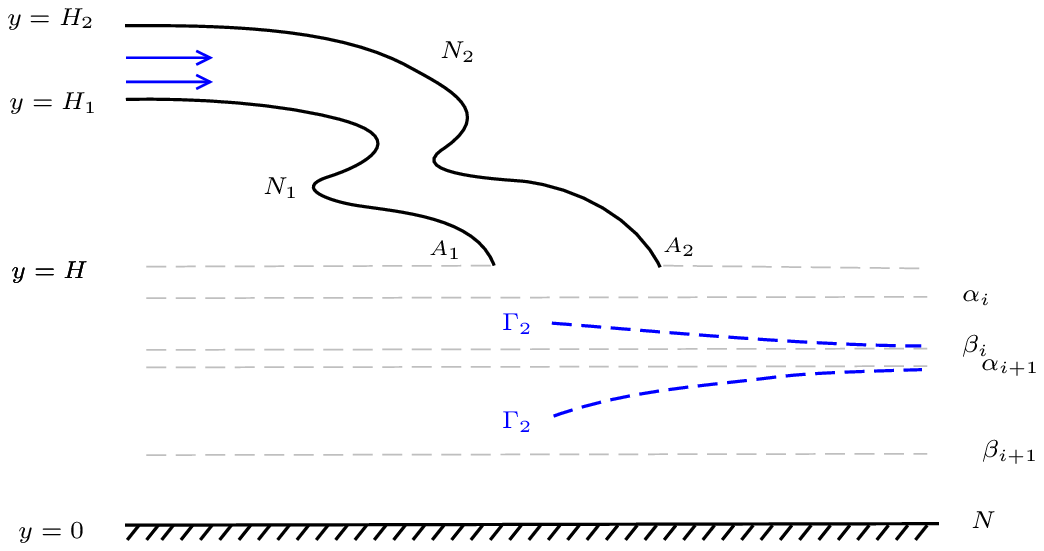}
\caption{Subcase 2.1}\label{f14}
\end{figure}

For sufficiently large $R>0$, set $$G_{i,R}=\left\{(x,y)\in D\mid
x>R,
\f{\alpha_{i+1}+\beta_{i+1}}2<y<\f{\alpha_{i}+\beta_{i}}2\right\}.$$
Similar to \eqref{c8}, we can conclude that $\psi_{\ld,Q_1}>0$ in
$G_{i,R}$ for sufficiently large $i$. This leads to a contradiction
by using the non-oscillation Lemma \ref{lb5} in the region
$G_{i,R}\cap\{x<2R\}$.

{\bf Subcase 2.2}. $k_{2,\ld,Q_1}(\alpha_{i+1}-0)=-\infty$ (see
Figure \ref{f15}).

\begin{figure}[!h]
\includegraphics[width=90mm]{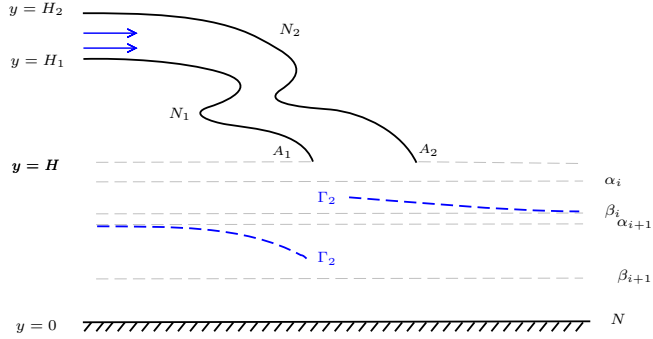}
\caption{Subcase 2.2}\label{f15}
\end{figure}

 By using
the monotonicity
 of $\psi_{\ld,Q_1}(x,y)$ with respect to $x$, we have that $\psi_{\ld,Q_1}=Q$ on $\{y=\alpha_{i+1}\}$, and thus the line $\{y=\alpha_{i+1}\}$ is the free boundary of
 $\psi_{\ld,Q_1}$. It follows from  Proposition \ref{lb1} that
 $$\psi_{\ld,Q_1}=Q\ \text{and}\ \f{\p\psi_{\ld,Q_1}}{\p y}=\sqrt{2\ld-2g\alpha_{i+1}}\ \text{on}\ \ \{y=\alpha_{i+1}\},$$
which contradicts to the Cauchy-Kovalevskaya theorem.
%Set $G_{i,R}=\left\{(x,y)\in D\mid x>R,
%\alpha_{i+1}<y<\f{\alpha_{i}+\beta_{i}}2\right\}$ for sufficiently
%large $i$ and $R$, we can obtain a contradiction by using the
%non-oscillation Lemma \ref{lb5} in $G_{i,R}\cap\{x<2R\}$.

Hence, we can conclude that the number of $(\alpha_i,\beta_i)$ is
finite.

{\bf Step 2.} In this step, we will show that \be\label{c9}\text{the
number of $(\alpha_i,\beta_i)$ is in fact one, namely, $i=1$}.\ee

If not, the number of the intervals $(\beta_i,\alpha_i)$ is at least
2. Then there are the following three cases.

{\bf Case 1.}
$k_{2,\ld,Q_1}(\beta_1+0)=k_{2,\ld,Q_1}(\alpha_2-0)=+\infty$.

The non-oscillation Lemma \ref{lb5} gives that $\alpha_2<\beta_1$.
We first claim that \be\label{c10}\text{ $k_{1,\ld,Q_1}(y)<+\infty$
for any $\alpha_2< y<\beta_1$.}\ee If not, without loss of
generality, one may assume that there exists a
$\beta_0\in(\alpha_2,\beta_1)$, such that
$$\lim_{y\rightarrow\beta_0^+}k_{1,\ld,Q_1}(y)=+\infty\ \ \text{and}\ \ k_{1,\ld,Q_1}(y)<+\infty\ \ \text{for}\ \ y\in(\beta_0,\beta_1).$$
Since $k_{2,\ld,Q_1}(\beta_1+0)=k_{1,\ld,Q_1}(\beta_0+0)=+\infty$,
we conclude that the free boundaries $\Gamma_{1,\ld,Q_1}$ and
$\Gamma_{2,\ld,Q_1}$ satisfy the flatness condition (see Section 7
in \cite{AC1}) near $y=\beta_0$ and $y=\beta_1$. Then there exists a
large $x_0>0$, such that
$$\text{$\Gamma_{1,\ld,Q_1}\cap\{(x,y)\mid x>x_0, \beta_0<y<\beta_1\}$ is described by $y=g_1(x)$, and $g_1(x)\downarrow \beta_0$ }$$as $x\rightarrow+\infty$, and $$\text{$\Gamma_{2,\ld,Q_1}\cap\{(x,y)\mid x>x_0,
\beta_1<y<\alpha_1\}$ is described by $y=g_2(x)$, and
$g_2(x)\downarrow \beta_1$ }$$as $x\rightarrow+\infty$. Furthermore,
it holds that
$$g'_i(x)\rightarrow 0\ \text{ as}\ \  x\rightarrow+\infty,\ \ \ \left|g_i^{(j)}(x)\right|\leq C \ \ \text{for} \ \ i=1,2,
j=2,3.$$

Due to the uniform elliptic estimates, there exists a sequence
$\{\psi_{\ld,Q_1}(x+n,y)\}$, such that
$$\psi_{\ld,Q_1}(x+n,y)\rightarrow\psi_0(x,y),\quad \text{in}\ C^{2,\alpha}(\mathbb{T})$$ where $\mathbb{T}=\{-\infty<x<+\infty\}\times\{\beta_0<y<\beta_1\}$, and
\be\label{c11}\left\{\ba{ll} &\Delta\psi_0=0  \quad\quad \text{in}~~\mathbb{T},\\
&\f{\p\psi_0(x,\beta_0)}{\p y}=\sqrt{2\lambda-2g\beta_0}\ \ \text{and}\  \ \f{\p\psi_0(x,\beta_1)}{\p y}=\sqrt{2\lambda-2g\beta_1} ,\\
&\psi_0(x,\beta_0)=0\ \text{and}\  \psi_0(x,\beta_1)=Q. \ea\right.
\ee A direct computation gives that
$$\psi_0(y)=\sqrt{2\lambda-2g\beta_1}(y-\beta_1)+Q\quad\quad\text{for}\ \beta_0<y<\beta_1,$$and$$\f{\p\psi_0(x,\beta_1)}{\p
y}=\sqrt{2\lambda-2g\beta_1}\quad\quad\text{for}\
-\infty<x<+\infty,$$which contradicts to $\f{\p\psi_0(x,\beta_1)}{\p
y}=\sqrt{2\lambda-2g\beta_0}$, due to
$\sqrt{2\lambda-2g\beta_1}<\sqrt{2\lambda-2g\beta_0}$.

Denote $$G_R=\{(x,y)\in D\mid R<x<2R, \alpha_2<y<H\}$$ for
sufficiently large $R>0$. Then it follows from the claim \eqref{c10}
that $\psi_{\ld,Q_1}>0$ in $G_R$, which contradicts to the
non-oscillation Lemma \ref{lb5} in $G_{R}$.

{\bf Case 2.}
$k_{2,\ld,Q_1}(\beta_1+0)=-k_{2,\ld,Q_1}(\alpha_2-0)=+\infty$.

Due to the monotonicity of $\psi_{\ld,Q_1}$ with respect to $x$, it
follows from  Proposition \ref{lb1} that
$$\psi_{\ld,Q_1}=Q\ \text{and} \ \f{\p\psi_{\ld,Q_1}}{\p y}=-\sqrt{2\ld-2g\alpha_2}\quad  \text{on}\
\{y=\alpha_2\},$$ which contradicts to the Cauchy-Kovalevskaya
theorem.

{\bf Case
3.}
$k_{2,\ld,Q_1}(\beta_1+0)=-k_{2,\ld,Q_1}(\alpha_2-0)=-\infty$.

Similarly to Case 2, one can get
$$\psi_{\ld,Q_1}=Q\ \text{and} \ \f{\p\psi_{\ld,Q_1}}{\p y}=\sqrt{2\ld-2g\beta_1}
\quad  \text{on} \ \{y=\beta_1\},$$ which also leads to a
contradiction.

%Similar to the claim \eqref{c10} in Case 1, we have
%$$\text{ $k_{1,\ld,Q_1}(y)<+\infty$ for any $\alpha_2\leq
%y\leq\beta_1$.}$$ which implies that $\psi_{\ld,Q_1}>0$ in
%$G_R=\{(x,y)\in D\mid R<x<2R, \alpha_2<y<H\}$ for sufficiently large
%$R>0$, which can obtain a contradiction by using the non-oscillation
%Lemma \ref{lb5} and Remark \ref{rl4} for $\psi_{\ld,Q_1}$ in
%$G_{R}$.

%{\bf Case 3.}
%$k_{2,\ld,Q_1}(\beta_1+0)=-k_{2,\ld,Q_1}(\alpha_2-0)=-\infty$. The
%monotonicity of $\psi_{\ld,Q_1}$ with respect to $x$ give that
%$$\psi_{\ld,Q_1}=Q\ \text{and}\ \ \f{\p\psi_{\ld,Q_1}}{\p y}\leq
%0\  \text{on}\ \ \{y=\beta_1\}.$$ Similar to Case 2, we can obtain a
%contradiction by using the non-oscillation Lemma \ref{lb5} and
%Remark \ref{rl4}.

{\bf Case 4.}
$k_{2,\ld,Q_1}(\beta_1+0)=k_{2,\ld,Q_1}(\alpha_2-0)=-\infty$.

There are the following two subcases.

{\bf Subcase 4.1.} $k_{2,\ld,Q_1}(\beta_2+0)=-\infty$. We first
claim that \be\label{c12}k_{1,\ld,Q_1}(y)=-\infty\ \ \text{for any}\
\ \beta_2<y<\alpha_2.\ee If not, without loss of generality, we
assume that there exists a $\beta_0\in(\beta_2,\alpha_2)$, such that
$$\lim_{y\rightarrow\beta_0^+}k_{1,\ld,Q_1}(y)=-\infty\ \ \text{and}\ \ k_{1,\ld,Q_1}(y)>-\infty\ \ \text{for}\ \ y\in(\beta_0,\beta_0+\e) \ \text{and small $\e>0$}.$$

Similar to Case 1 in Step 2, there exists a sequence
$\{\psi_{\ld,Q_1}(x-n,y)\}$, such that
$$\psi_{\ld,Q_1}(x-n,y)\rightarrow\psi_0(x,y),\quad\quad\text{in}\ \ C^{2,\alpha}\left(\{x<k_{2,\ld,Q_1}(y),\beta_2<y<\alpha_2\}\right),$$ and $\psi_0$ satisfies
\eqref{c11}, which leads to a contradiction. The claim \eqref{c12}
implies that
$$\psi_{\ld,Q_1}>0\ \ \text{in}\ \ \{(x,y)\in D\mid
x<k_{2,\ld,Q_1}(y),\beta_2<y<\alpha_2\},$$ which contradicts to the
non-oscillation Lemma \ref{lb5}.

{\bf Subcase 4.2.} $k_{2,\ld,Q_1}(\beta_2+0)=+\infty$. Similar to
the claim \eqref{c12}, one can get
\be\label{c13}k_{1,\ld,Q_1}(y)=-\infty\ \ \text{for any}\ \
0<y<\alpha_2.\ee The non-oscillation Lemma \ref{lb5} yields that
\be\label{c14}k_{2,\ld,Q_1}(y)=+\infty\ \ \text{for any}\ \
0<y<\beta_2.\ee

With the aid of \eqref{c13} and \eqref{c14}, by using the similar
arguments in Case 1 in Step 2, one can get a sequence
$\{\psi_{\ld,Q_1}(x+n,y)\}$, such that
$$\psi_{\ld,Q_1}(x+n,y)\rightarrow\psi_1(x,y),\quad\ \  \text{in}\ ~~C^{2,\alpha}\left(\{-\infty<x<+\infty\}\times\{0<y<\beta_2\}\right),$$
and $\psi_1$ satisfies $$\left\{\ba{ll} &\Delta\psi_1=0 \ \  \text{in}\ ~~\{-\infty<x<+\infty\}\times\{0<y<\beta_2\},\\
&\f{\p\psi_1(x,\beta_2)}{\p y}=\sqrt{2\lambda-2g\beta_2} ,\ \
\psi_1(x,\beta_2)=Q. \ea\right. $$ By the uniqueness for the
Cauchy-Kovalevskaya theorem, one has
\be\label{c19}\psi_1(x,y)=\sqrt{2\lambda-2g\beta_2}(y-\beta_2)+Q\ \
\text{in}\ \{-\infty<x<+\infty\}\times\{0<y<\beta_2\}.\ee It follows
from Lemma \ref{lb3} that
$$\max\{-\sqrt{2\ld-2gh_1}y+Q_1,0\}\leq\psi_1(x,y)\leq \min\{\sqrt{2\ld-2gh_2}y+Q_1,Q\}$$ in $D$, which
implies that $\psi_1(x,0)=Q_1$. Hence
\be\label{c15}Q-Q_1=\sqrt{2\ld-2g\beta_2}\beta_2.\ee

Similarly, there exists a sequence $\{\psi_{\ld,Q_1}(x-n,y)\}$, such
that
$$\psi_{\ld,Q_1}(x-n,y)\rightarrow\psi_2(x,y),\quad \text{in}\
C^{2,\alpha}(\mathbb{T}),$$where
$\mathbb{T}=\{-\infty<x<+\infty\}\times\{0<y<\alpha_2\}$,
and $\psi_2$ satisfies $$\left\{\ba{ll} &\Delta\psi_2=0  \quad\quad \text{in}~~\mathbb{T},\\
&\f{\p\psi_2(x,\alpha_2)}{\p y}=\sqrt{2\lambda-2g\alpha_2} ,\ \
\psi_2(x,\alpha_2)=Q. \ea\right. $$ It implies that
\be\label{c16}Q-Q_1=\sqrt{2\ld-2g\alpha_2}\alpha_2.\ee It follows
from \eqref{c15} and \eqref{c16} that we obtain a contradiction to
$\alpha_2>\beta_2$.

Hence, we have shown that
$$\text{$k_{2,\ld,Q_1}(y)$ is continuous and finite in $(\beta_1,H)$
and infinite in $(0,\beta_1)$}.$$ Furthermore,
\be\label{c17}\text{$\lim_{y\rightarrow
\beta_1^+}k_{2,\ld,Q_1}(y)=+\infty$}.\ee

{\bf Step 3.} In this step, we will show that
$$\beta_1=h_2.$$

Due to \eqref{c17}, it follows from the similar arguments in Case 1
in Step 1 that there exists a sequence $\{\psi_{\ld,Q_1}(x+n,y)\}$,
such that
$$\psi_{\ld,Q_1}(x+n,y)\rightarrow\psi_0(x,y)\quad\quad\text{in}\ C^{2,\alpha}(\mathbb T),$$where $\mathbb T=\{-\infty<x<+\infty\}\times\{0<y<\beta_1\}$, and
$$\left\{\ba{ll} &\Delta\psi_0=0  \quad\quad \text{in}~~\mathbb T,\\
&\f{\p\psi_0(x,\beta_1)}{\p y}=\sqrt{2\lambda-2g\beta_1}\
\text{and}\ \psi_0(x,\beta_1)=Q\ \ \text{for $-\infty<x<+\infty$}.
\ea\right.
$$ Similar arguments in the proof in Subcase 4.2 in Step 2 show
$$Q-Q_1=\sqrt{2\lambda-2g\beta_1}\beta_1.$$
This and Proposition \ref{lb0} give $\beta_1=h_2$.

Next we consider the case that $Q_1=Q$. It follows from similar
arguments above that
 $$\text{$k_{1,\ld,Q_1}(y)$
is continuous and finite in $(h_1,H)$ and $\lim_{y\rightarrow
h_1^+}k_{1,\ld,Q_1}(y)=-\infty$}.$$ Finally, using the
non-oscillation Lemma \ref{lb5}, one can show that
$$h_2=0,\ \ \ \text{$k_{2,\ld,Q_1}(y)$
is continuous and finite in $(0,H)$}.$$

\end{proof}

The previous result gives the almost continuous fit conditions. The
continuous fit conditions will follow once the critical value
$Q_1=0$ or $Q_1=Q$ can be excluded.

\begin{proposition}\label{lc3}The value $Q_1$ in Proposition \ref{lc1} lies in $(0,Q)$.
\end{proposition}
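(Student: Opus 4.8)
The plan is to argue by contradiction and to exclude the two endpoint values separately; since the two nozzle walls play symmetric roles, it suffices to rule out $Q_1=0$, the case $Q_1=Q$ being handled verbatim after replacing $\psi_{\ld,Q_1}$ by $Q-\psi_{\ld,Q_1}$ and $\Gamma_{1,\ld,Q_1}$ by $\Gamma_{2,\ld,Q_1}$. So suppose $Q_1=0$. By part (2) of Proposition \ref{lc1}, since $Q_1<Q$ the right continuous fit $k_{2,\ld,Q_1}(H)=1$ already holds and the right jet is genuine, while on the left we only know $k_{1,\ld,Q_1}(H)\le-1$. First I would extract the degenerate geometry of the left free boundary: exactly as in the treatment of the case $Q_1=Q$ at the end of the proof of Proposition \ref{lc2}, the value $Q_1=0$ forces $h_1=0$ and $k_{1,\ld,Q_1}(y)$ to be continuous and finite on all of $(0,H)$, so that $\Gamma_{1,\ld,Q_1}$ does not escape to $x=-\infty$ but descends to meet the ground $N$ at a single contact point $P=(x_P,0)$. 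The crucial structural feature is that, because $Q_1=0$, one has $\psi_{\ld,Q_1}\equiv 0$ both on $N$ and on $N_1\cup\Gamma_{1,\ld,Q_1}$; thus the ground itself becomes a zero level set of the stream function, which is precisely what both produces the contact point $P$ and, as I now explain, supplies an incompatible datum there.

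Since $N=\{y=0\}$ is flat and $\psi_{\ld,Q_1}\equiv0$ on it, I would reflect $\psi_{\ld,Q_1}$ oddly across $N$, setting $\Psi(x,y)=\psi_{\ld,Q_1}(x,y)$ for $y\ge0$ and $\Psi(x,y)=-\psi_{\ld,Q_1}(x,-y)$ for $y<0$. By the Schwarz reflection principle, $\Psi$ is harmonic across the fixed wall $N$ on the fluid side; the arc $\Gamma_{1,\ld,Q_1}$ together with its mirror image $\tilde\Gamma_1$ forms a single level-zero free boundary of $\Psi$ on which $|\g\Psi|=\sqrt{2\ld-2g|y|}$, and $\Psi$ is odd in $y$. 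From the oddness, $\partial_x\Psi$ is continuous across $N$ and, because $\psi_{\ld,Q_1}\equiv 0$ along the flat ground, its tangential derivative vanishes there; hence any value of $\g\Psi$ attained at $P$ must be purely vertical. The reflected object is again a local minimizer of the corresponding functional, so Proposition \ref{lb1} and the non-degeneracy Lemma \ref{lf4} apply to $\Psi$ near $P$: they yield that $P$ is a genuine free boundary point of $\Psi$ at which $\g\Psi(P)\neq0$. Combining this with the previous remark, $\g\Psi(P)=(0,\pm\sqrt{2\ld})$ is vertical and non-zero.

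The contradiction then follows by a unique-continuation count of the zero set, in the spirit of the Cauchy--Kovalevskaya arguments used repeatedly above (Lemma \ref{lb6}, Lemma \ref{lb9}, Proposition \ref{lc2}). Because $\g\Psi(P)\neq0$, the implicit function theorem shows that $\{\Psi=0\}$ is, near $P$, a single $C^1$ curve, and since $\g\Psi(P)$ is vertical this curve has a horizontal tangent at $P$. But both the ground $N$ and the free boundary $\Gamma_{1,\ld,Q_1}$ pass through $P$ and lie in $\{\Psi=0\}$, so they must coincide near $P$; this is absurd, since $\Gamma_{1,\ld,Q_1}$ is a graph $x=k_{1,\ld,Q_1}(y)$ entering the open half-plane $\{y>0\}$ rather than lying in $\{y=0\}=N$. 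Running the symmetric argument with $Q-\psi_{\ld,Q_1}$ and $\Gamma_{2,\ld,Q_1}$ excludes $Q_1=Q$, and therefore $Q_1\in(0,Q)$. I expect the main obstacle to be exactly the local analysis at $P$: it is a corner where the free boundary meets the fixed wall, so Hopf's lemma is not directly available (as already noted for the endpoint $A_1$ in Proposition \ref{lb11}), and one must rigorously justify the validity of the odd reflection up to $P$ together with the $C^1$-regularity and non-vanishing of $\g\Psi$ there. Should a degenerate tangential contact of $\Gamma_{1,\ld,Q_1}$ with $N$ threaten the regularity, it can be excluded by the non-oscillation Lemma \ref{lb5} applied in the thin region trapped between $\Gamma_{1,\ld,Q_1}$ and $N$ near $P$, exactly as flat and tangential configurations are ruled out elsewhere in the paper.
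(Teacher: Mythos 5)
Your reduction to the case $Q_1=0$ matches the paper, but the argument has genuine gaps. The first is structural: you assume from the start that $\Gamma_{1,\ld,0}$ meets the ground at a finite point $P$. What the ($Q_1=Q$ case of) Proposition \ref{lc2} gives, transposed to $Q_1=0$, is only that $h_1=0$ and that $k_{1,\ld,0}(y)$ is finite for each $y\in(0,H)$; this does \emph{not} imply that $\lim_{y\rightarrow 0^+}k_{1,\ld,0}(y)$ is finite (the non-oscillation lemma only gives existence of the limit in $[-\infty,+\infty]$). The paper spends two of its three cases excluding exactly these possibilities: $k_{1,\ld,0}(0)=+\infty$ is ruled out by translating to a strip limit $\psi_0$ on $\{0<y<h_2\}$ that solves an overdetermined problem with incompatible Neumann data $\sqrt{2\ld}$ at $y=0$ and $\sqrt{2\ld-2gh_2}$ at $y=h_2$, and $k_{1,\ld,0}(0)=-\infty$ is ruled out by the non-oscillation Lemma \ref{lb5} and Remark \ref{rl4} applied in $D\cap\{-2R<x<-R\}\cap\{\psi_{\ld,0}>0\}$. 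These cases are simply missing from your proof.

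The second gap is in your finite-contact argument itself. After odd reflection, $\{\Psi=0\}$ is not a curve: since $Q_1=0$, $\psi_{\ld,0}\equiv 0$ on the whole zero phase $\{x<k_{1,\ld,0}(y)\}\cap\{y>0\}$, so $\{\Psi=0\}$ has non-empty interior near $P$; moreover $\Psi$ is only Lipschitz across $\Gamma_{1,\ld,0}$ (its gradient jumps from modulus $\sqrt{2\ld-2gy}$ to $0$), so the implicit function theorem cannot be invoked at $P$, and the inference that $N$ and $\Gamma_{1,\ld,0}$ must coincide collapses. A vertical limiting gradient at $P$ is in fact perfectly compatible with $\Gamma_{1,\ld,0}$ entering $\{y>0\}$ tangentially to the ground, so no contradiction is reached; your non-oscillation fallback handles a tangential cusp only when the fluid occupies the cusp, not when the cusp is the zero phase. (The assertion that the odd reflection is again a local minimizer of an ACF-type functional is also unjustified, since competitors for the reflected problem need not be odd.) Note finally that your argument nowhere uses $Q_1<Q$, i.e.\ $h_2>0$, yet some global input is indispensable: the paper's contradiction in this case is obtained by combining the smooth-fit theory (which gives $\f{\p\psi_{\ld,0}}{\p y}=\sqrt{2\ld}$ at $P$, the one point where your and the paper's arguments agree) with the comparison function $\o_0=\min\{\gamma y,Q\}$, $\gamma=\f{\sqrt{2\ld-2gh_2}+\sqrt{2\ld}}{2}$, and the downstream asymptotics $\psi_{\ld,0}\rightarrow\min\{\sqrt{2\ld-2gh_2}\,y,Q\}$ as $x\rightarrow+\infty$, which force $\sqrt{2\ld}\leq\gamma<\sqrt{2\ld}$; the strict inequality is precisely where $h_2>0$ enters, and a purely local analysis at $P$ cannot produce it.
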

\begin{proof}
Without loss of generality, we assume that $Q_1=0$, which implies
that $h_2\in(0,H)$. The non-oscillation Lemma \ref{lb5} gives that
$$k_{1, \ld,0}(0)=\lim_{y\rightarrow0^+} k_{1,\ld,0}(y)\ \text{exists}.$$ Next, we consider the following
three cases.

{\bf Case 1.} $k_{1,\ld,0}(0)=+\infty$ (see Figure \ref{f10}).

\begin{figure}[!h]
\includegraphics[width=100mm]{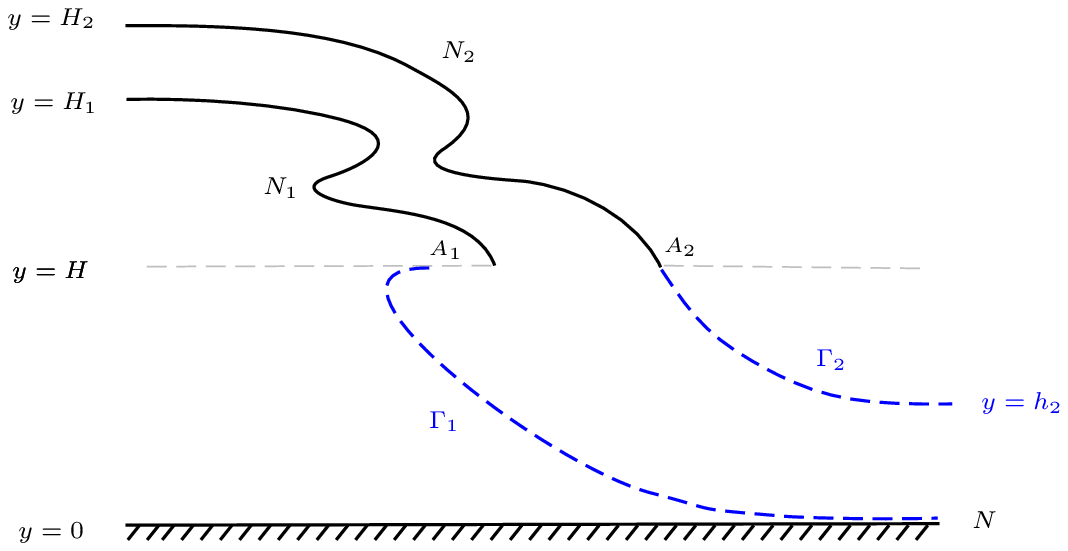}
\caption{Case 1.}\label{f10}
\end{figure}

Since $k_{2,\ld,0}(h_2)=+\infty$, it follows from the similar
arguments in Case 1 in Step 2 in the proof of Proposition \ref{lc2}
that there exists a sequence $\{\psi_{\ld,0}(x+n,y)\}$, such that
$$\psi_{\ld,0}(x+n,y)\rightarrow\psi_0(x,y)\quad\quad \text{in}\ C^{2,\alpha}(\{-\infty<x<+\infty\}\times\{0<y<h_2\}),$$
and $\psi_0$ satisfies $$\left\{\ba{ll} &\Delta\psi_0=0  \quad\quad \text{in}~~\{-\infty<x<+\infty\}\times\{0<y<h_2\},\\
&\f{\p\psi_0(x,0+0)}{\p y}=\sqrt{2 \lambda}\ \ \text{and}\ \ \f{\p\psi_0(x,h_2-0)}{\p y}=\sqrt{2\lambda-2gh_2},\\
&\psi_0(x,0)=0\ \text{and}\  \psi_0(x,h_2)=Q. \ea\right. $$ This is
an overdetermined problem, due to
$\sqrt{2\lambda-2gh_2}<\sqrt{2\lambda}$.

{\bf Case 2.} $k_{1,\ld,0}(0)=-\infty$ (see Figure \ref{f11}).

\begin{figure}[!h]
\includegraphics[width=100mm]{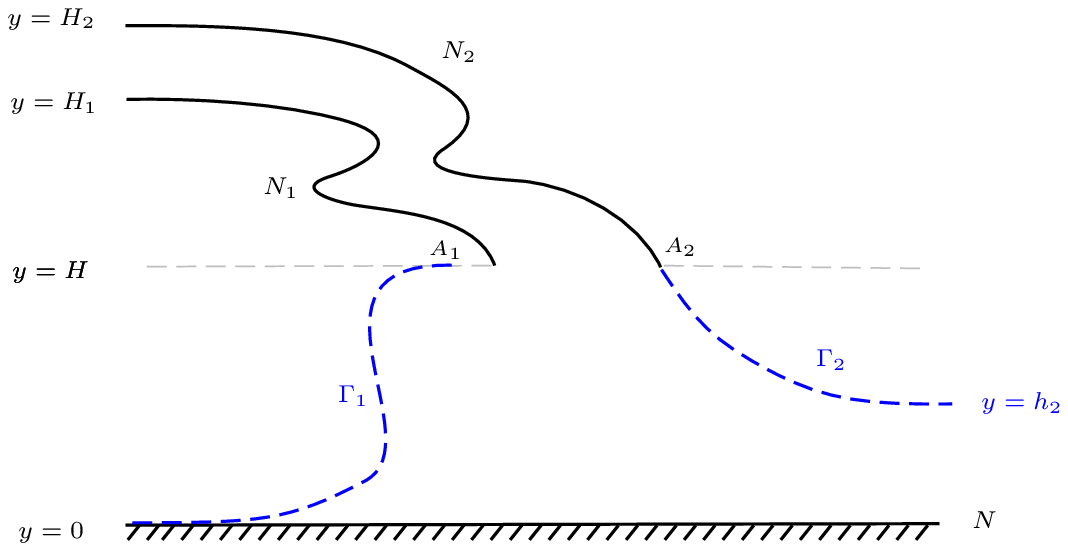}
\caption{Case 2.}\label{f11}
\end{figure}

The maximum principle gives that $$\f{\p\psi_{\ld,0}}{\p y}\geq 0\ \
\text{on}\ \ N.$$ Similar to the Case 1 in the proof of Proposition
\ref{lc2}, one can obtain a contradiction by using the
non-oscillation Lemma \ref{lb5} and Remark \ref{rl4} for
$\psi_{\ld,0}$ in $D\cap\{-2R<x<-R\}\cap\{\psi_{\ld,0}>0\}$,
provided that $R$ is sufficiently large.

{\bf Case 3.} $k_{1, \ld,0}(0)\in(-\infty,+\infty)$ (see Figure
\ref{f12}).

\begin{figure}[!h]
\includegraphics[width=100mm]{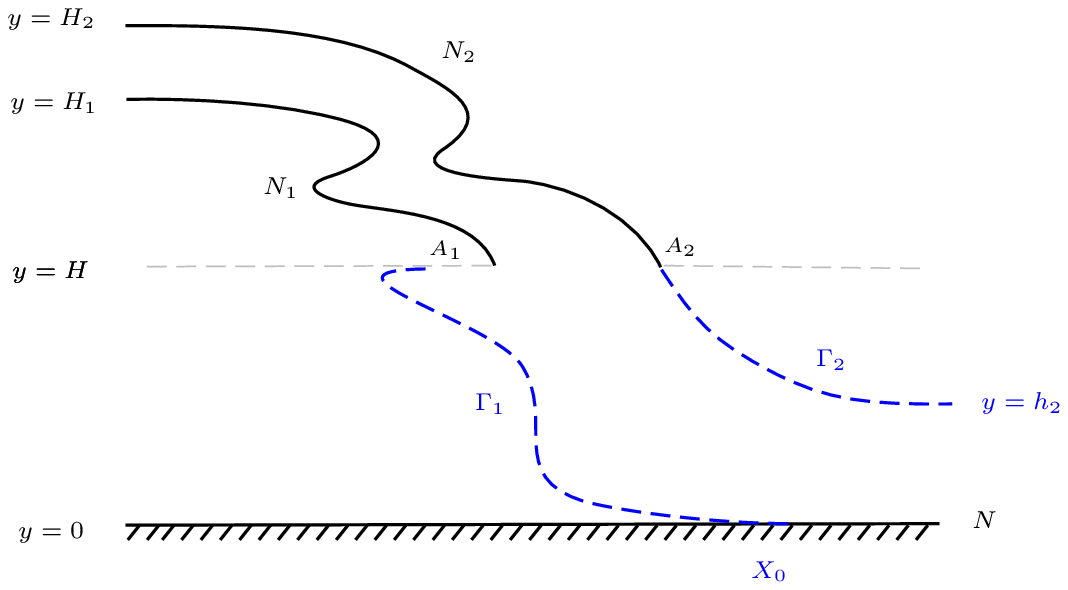}
\caption{Case 3.}\label{f12}
\end{figure}

Set $X_0=(k_{1,\ld,0}(0),0)$. It follows from the results in Section
9 in \cite{ACF3} and Section 11 in Chapter 3 in \cite{FA1} that the
continuous fit conditions imply the smooth fit conditions, we have
that the free boundary $\Gamma_{1,\ld,0}$ is $C^1$-smooth at $X_0$.
Furthermore, $\g\psi_{\ld,0}$ is uniformly continuous in a $\{\psi_{
\ld,0}>0\}$-neighborhood of $X_0$, and thus $$\text{ $|\g\psi_{
\ld,0}|=\f{\p\psi_{\ld,0}}{\p y}=\sqrt{2\ld}$\quad at $X_0$.}$$

Consider a function $$\text{$\o_0(x,y)=\min\{\gamma y,Q\}$\quad for
$\gamma=\f{\sqrt{2\ld-2gh_2}+\sqrt{2\ld}}2$.}$$ It follows from the
Step 3 in the proof of Proposition \ref{lc1} that
$$\psi_{\ld,0}(x,y)\rightarrow\min\{\sqrt{2\ld-2gh_2}y,Q\}\ \ \text{as}\ \ x\rightarrow+\infty.$$
Since $\gamma>\sqrt{2\ld-2gh_2}$, so $\psi_{ \ld,0}\leq\o_0$ in the
far field, which together with the maximum principle gives that
$$\psi_{\ld,0}(x,y)\leq\o_0(x,y)\ \ \text{in}\ \ D\cap\{\o_0<Q\}.$$
In view of $\psi_{\ld,0}=\o_0=0$ at $X_0$, thus we have
$$\sqrt{2\ld}=\f{\p\psi_{\ld,0}}{\p y}=\f{\p\psi_{\ld,0}}{\p\nu}\leq\f{\p\o_0}{\p\nu}=\f{\p\o_0}{\p
y}=\gamma=\f{\sqrt{2\ld-2gh_2}+\sqrt{2\ld}}2 \ \ \text{at}\ \ X_0,
$$ which leads to a contradiction, due to $h_2>0$.

\end{proof}

\begin{remark}
Proposition \ref{lc3} implies that the impinging jet under gravity
with continuous fit conditions must possess two asymptotic
directions in far fields. This conclusion coincides the results on
impinging jet in absence of gravity in \cite{CDW}.
\end{remark}

As a consequence of Proposition \ref{lc1}-Proposition \ref{lc3}, we
can get
\begin{theorem}\label{lc4} For any $Q>2\sqrt{gH^3}$, there exist an effluent flux $Q_1\in(0,Q)$ and a $\ld>\f{\max\{Q_1^2,(Q-Q_1)^2\}}{2H^2}+gH$,
such that there exists a solution $(u,v,p,\Gamma_1,\Gamma_2)$ to the
impinging jet flow problem, where $$u=\f{\p\psi_{\ld,Q_1}}{\p y},\
v=-\f{\p\psi_{\ld,Q_1}}{\p x} \ \text{and}\
p=p_{atm}+\ld-\f{|\g\psi_{\ld,Q_1}|^2}{2}-gy,$$ and
$$\Gamma_i=\Gamma_{i,\ld,Q_1}:x=k_{i,\ld,Q_1}(y)=k_i(y)\ \text{for $
y\in(h_i,H)$ and $i=1,2$}.$$

\end{theorem}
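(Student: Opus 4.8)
The plan is to assemble Propositions \ref{lc1}, \ref{lc2} and \ref{lc3} and then verify item by item that the triple constructed from the stream function $\psi:=\psi_{\ld,Q_1}$ meets every requirement of Definition \ref{def1}. By Proposition \ref{lc1} there is a pair $(\ld,Q_1)$ with $\ld>\f{\max\{Q_1^2,(Q-Q_1)^2\}}{2H^2}+gH$ and a solution $\psi$ of the free boundary problem \eqref{b6}, monotone in $x$, harmonic in $\O\cap\{0<\psi<Q\}$, with analytic free boundaries $\Gamma_{i,\ld,Q_1}$ written as continuous graphs $x=k_{i,\ld,Q_1}(y)$ and with $|\g\psi|=\sqrt{2\ld-2gy}$ there. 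Proposition \ref{lc3} excludes the degenerate values and forces $Q_1\in(0,Q)$, so that Proposition \ref{lc1}(2) now yields the genuine continuous fit $k_{1,\ld,Q_1}(H)=-1$ and $k_{2,\ld,Q_1}(H)=1$, and Proposition \ref{lb0} gives $h_1,h_2\in(0,H)$. I then set $\O_0=\O\cap\{0<\psi<Q\}$, $u=\p_y\psi$, $v=-\p_x\psi$, $p=p_{atm}+\ld-\f{|\g\psi|^2}2-gy$ and $\Gamma_i=\Gamma_{i,\ld,Q_1}$.

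Properties 1 and 2 are then read off directly. Property 1 is the content of Proposition \ref{lc2}: the graphs $k_i$ are continuous and finite on $(h_i,H]$ with $\lim_{y\to h_1^+}k_1=-\infty$ and $\lim_{y\to h_2^+}k_2=+\infty$, while the analyticity in Proposition \ref{lc1}(1) furnishes the $C^1$ (indeed real-analytic) regularity of $k_i$ on $(h_i,H)$. For Property 2, analyticity again comes from Proposition \ref{lc1}(1); the continuous fit \eqref{a11} follows from the continuous fit above together with the normalization $g_i(H)=(-1)^i$. The smooth fit \eqref{a12} is the only point that is not bookkeeping: once continuous fit holds I would invoke the continuous-fit-implies-smooth-fit theory used earlier (Section 9 in \cite{ACF3} and Section 11 of Chapter 3 in \cite{FA1}), which makes $N_i\cup\Gamma_i$ of class $C^1$ at $A_i$ and hence $g_i'(H+0)=k_i'(H-0)$.

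For Property 3 I would first check the PDE. The choice $u=\p_y\psi,\ v=-\p_x\psi$ makes $u_x+v_y=0$ automatic, and $v_x-u_y=-\Delta\psi=0$ gives the irrotationality \eqref{a4} because $\psi$ is harmonic in $\O_0$. Since the vorticity vanishes, the identity $(u,v)\cdot\g(u,v)=\g\big(\f12|\g\psi|^2\big)$ holds; combined with $\g p=-\g\big(\f12|\g\psi|^2\big)-(0,g)$, which is immediate from the definition of $p$, this yields $(u,v)\cdot\g(u,v)+\g p=(0,-g)$, i.e. the momentum equations in \eqref{a3}. The regularity $(u,v,p)\in\big(C^{1,\alpha}(\O_0)\cap C^0(\overline{\O}_0)\big)^3$ follows from the interior analyticity of the harmonic $\psi$, the analyticity of $\Gamma_1,\Gamma_2$, the $C^{2,\alpha}$ regularity of the nozzle walls and the flat ground (Schauder estimates up to the smooth boundary portions, with Lemma \ref{lb60} controlling the gradient). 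Finally, \eqref{a6} is automatic: $N_1\cup\Gamma_1$, $N$ and $N_2\cup\Gamma_2$ carry the constant values $0,\ Q_1,\ Q$ of $\psi$, hence are level sets, so $(u,v)=(\p_y\psi,-\p_x\psi)$ is tangent to each and $(u,v)\cdot\vec n=0$.

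Property 4 is one line: on $\Gamma_1\cup\Gamma_2$ one has $|\g\psi|^2=2\ld-2gy$ from Proposition \ref{lc1}(1), so $p=p_{atm}+\ld-\f12(2\ld-2gy)-gy=p_{atm}$. I expect the only real obstacle to lie in Property 2, namely the passage from continuous to smooth fit, together with the continuity of $(u,v)$ up to the detachment points $A_1,A_2$ in Property 3: establishing the $C^1$ matching of the free and fixed boundaries exactly at the corners is where the cited regularity theory is needed. Everything else is the algebra of converting the stream function into $(u,v,p)$ and the direct reading-off of Propositions \ref{lc1}--\ref{lc3}.
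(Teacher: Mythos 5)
Your skeleton (assemble Propositions \ref{lc1}--\ref{lc3}, then verify Definition \ref{def1} item by item, with smooth fit via the continuous-fit-implies-smooth-fit theory of \cite{ACF3} and \cite{FA1}) is the same as the paper's, and your treatment of Properties 3 and 4 is fine; but there are two genuine gaps. The first: you assert that Proposition \ref{lc1} already provides $\ld>\f{\max\{Q_1^2,(Q-Q_1)^2\}}{2H^2}+gH$, whereas it only provides $\ld\geq\f{\max\{Q_1^2,(Q-Q_1)^2\}}{2H^2}+gH$, while the theorem demands the strict inequality (relatedly, Proposition \ref{lb0} gives $h_1,h_2\in(0,H]$, not $(0,H)$ as you state). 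Ruling out equality is a separate step in the paper: if, say, $Q_1\geq\f Q2>\sqrt{gH^3}$ and $\ld=\f{Q_1^2}{2H^2}+gH$, then $Q_1=\sqrt{2\ld-2gh_1}\,h_1$ together with Proposition \ref{lb0} forces $h_1=H$, so the free boundary $\Gamma_{1,\ld,Q_1}$, which lives in the strip $h_1<y<H$, would be empty --- contradicting $k_{1,\ld,Q_1}(H)=-1$ and $\lim_{y\to h_1^+}k_{1,\ld,Q_1}(y)=-\infty$ from Propositions \ref{lc1} and \ref{lc2}.

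The second gap is more serious. Property 1 of Definition \ref{def1} requires $x=k_i(y)$ to be a $C^1$ function of $y$, and this does \emph{not} follow from analyticity of $\Gamma_i$ as a curve: an analytic level curve that is a continuous graph over $y$ can fail to be $C^1$ in $y$ at a point with horizontal tangent (model case $x-x_0=(y-y_0)^{1/3}$, i.e.\ the analytic curve $y=y_0+(x-x_0)^3$). Since $|\g\psi_{\ld,Q_1}|=\sqrt{2\ld-2gy}\neq 0$ on $\Gamma_i$, a horizontal tangent corresponds exactly to $v=-\f{\p\psi_{\ld,Q_1}}{\p x}=0$ there, and nothing in Propositions \ref{lc1}--\ref{lc3} excludes this. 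That is precisely what the second half of the paper's proof does: monotonicity of $\psi_{\ld,Q_1}$ in $x$ gives $v\leq0$, the strong maximum principle gives $v<0$ in $\O_0$, and at a free boundary point $X_0=(x_0,y_0)\in\Gamma_{2,\ld,Q_1}$ with $v(X_0)=0$ Hopf's lemma gives $v_y(X_0)>0$, which contradicts the Bernoulli condition $u^2+v^2=2\ld-2gy$ on $\Gamma_{2,\ld,Q_1}$: the derivative of the right-hand side along the (horizontal) tangent direction vanishes at $X_0$, while that of the left-hand side equals $2uu_x=-2uv_y=-2\sqrt{2\ld-2gy_0}\,v_y<0$. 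Only after $v<0$ is known on $\O_0\cup\Gamma_1\cup\Gamma_2$ does the implicit function theorem yield the $C^1$-smoothness of $k_i$; this same step is what proves part (1) of Theorem \ref{the1}. Without it, your verification of Property 1 is incomplete.
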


\begin{proof}

It follows from Proposition \ref{lc1} and Proposition \ref{lc3} that
$$\ Q_1\in(0,Q)\ \ \text{and}\ \ \ld\geq\f{\max\{Q_1^2,(Q-Q_1)^2\}}{2H^2}+gH.$$
Furthermore, Proposition \ref{lc1} and Proposition \ref{lc2} give
that
$$k_{1,\ld,Q_1}(H)=-1\ \
\text{and}\ \ k_{2,\ld,Q_1}(H)=1,$$ and
 $$\text{$\lim_{y\rightarrow
h_1^+}k_{1,\ld,Q_1}(y)=-\infty$ and $\lim_{y\rightarrow
h_2^+}k_{2,\ld,Q_1}(y)=+\infty$},$$ where $h_1$ and $h_2$ are
uniquely determined by \eqref{b3}. It follows from Lemma \ref{lf8}
in the appendix that the free boundary $\Gamma_{1,\ld,Q_1}$ and
$\Gamma_{2,\ld,Q_1}$ are analytic, and the Bernoulli's law gives
that $p=p_{atm}$ on $\Gamma_{1,\ld,Q_1}\cup\Gamma_{2,\ld,Q_1}$.

 Next, we will show that
$\ld>\f{\max\{Q_1^2,(Q-Q_1)^2\}}{2H^2}+gH$. If not, without loss of
generality, one may assume that $Q_1\geq\f{Q}2>\sqrt{gH^3}$ and
$\ld=\f{Q_1^2}{2H^2}+gH$. Since $Q_1=\sqrt{2\ld-2gh_1}h_1$, it
follows from the proof of Proposition \ref{lb0} that $h_1=H$. This
implies that the free boundary $\Gamma_{1,\ld,Q_1}$ is empty, which
leads to a contradiction.

Therefore, the continuous fit conditions are fulfilled for the free
boundary $\Gamma_{i,\ld,Q_1}$ at $A_i$ ($i=1,2$). It follows from
the results in Section 9 in \cite{ACF3} and Section 11 in Chapter 3
in \cite{FA1} that the continuous fit conditions imply the smooth
fit conditions, hence, $N_i\cup\Gamma_{i,\ld,Q_1}$ is $C^1$-smooth
at $A_i$, that is
$$k_{1,\ld,Q_1}'(H-0)=g'_1(H+0)\ \
\text{and}\ \ k_{2,\ld,Q_2}(H-0)=g'_2(H+0).$$ Furthermore,
$\g\psi_{\ld,Q}$ is uniformly continuous in a $\{\psi_{
\ld,Q_1}>0\}$-neighborhood of $A_1$, and $\g\psi_{\ld,Q_1}$ is
uniformly continuous in a $\{\psi_{\ld,Q_1}<Q\}$-neighborhood of
$A_2$.

It remains to show that \be\label{d1} v=-\f{\p\psi_{\ld,Q_1}}{\p
x}<0 \ \quad
\text{in}~~(\O\cap\{0<\psi_{\ld,Q_1}<Q\})\cup\Gamma_{1,\ld,Q_1}\cup\Gamma_{2,\ld,Q_1}.\ee

By virtue of the monotonicity of $\psi_{\ld,Q_1}$ with respect to
$x$, one has
$$\psi_{\ld,Q_1}(x_1,y)\geq\psi_{\ld,Q_1}(x_2,y) \ \ \text{for any} \ \
x_1\geq x_2,$$ which implies that $$\text{ $-\f{\p
\psi_{\ld,Q_1}}{\p x}\leq 0$ in
$\O_0=\O\cap\{0<\psi_{\ld,Q_1}<Q\}$.}$$

Consider $v=-\f{\p \psi_{\ld,Q_1}}{\p x}$ in $\O_0$, which solves
the Laplace equation in $\O_0$. The strong maximum principle gives
that $v<0$ in $\O_0$. Finally, we claim that
$$v<0\ \ \ \text{on}~~\Gamma_{1,\ld,Q_1}\cup\Gamma_{2,\ld,Q_1}.$$
If not, without loss of generality, suppose that there exists a
$X_0=(x_0,y_0)\in\Gamma_{2,\ld,Q_1}$, such that $v(X_0)=0$. Since
the free boundary $\Gamma_{2,\ld,Q_1}$ is analytic at $X_0$, we can
assume that $\nu=(0,1)$ is the outer normal vector to
$\Gamma_{2,\ld,Q_1}$ at $X_0$. Then Hopf's lemma implies that
\be\label{d2}v_y=\f{\p v}{\p\nu}>0\ \ \text{at}\ \ X_0.\ee

It follows from Proposition \ref{lb1} that
$u^2+v^2=|\g\psi_{\ld,Q_1}|^2=2\ld-2gy$ on $\Gamma_{2,\ld,Q_1}$, and
\be\label{d3}\f{\p (u^2+v^2)}{\p s}=\f{\p (2\ld-2gy)}{\p
s}=(0,-2g)\cdot(1,0)=0\ \ \text{at}\ \ X_0,\ee where $s=(1,0)$ is
the tangential vector of $\Gamma_{2,\ld,Q_1}$ at $X_0$. On the other
hand, it follows from \eqref{d2} that
$$\f{\p (u^2+v^2)}{\p s}=2uu_x+2vv_y=-2uv_y=-2\sqrt{2\ld-2gy_0}v_y<0\ \ \text{at}\ \
X_0,$$ which contradicts to \eqref{d3}.

Since $v<0$ on $\Gamma_{1,\ld,Q_1}\cup\Gamma_{2,\ld,Q_1}$, the
implicit function theorem gives that $x=k_{i,\ld,Q_1}(y)$ is
$C^1$-smooth for any $x\in(h_i,H)$, $i=1,2$.

%Denote
%$$u=\f{\p\psi_{\ld,Q_1}}{\p y},\ v=-\f{\p\psi_{\ld,Q_1}}{\p x} \ \text{and}\ p=P_{atm}+\ld-\f{|\g\psi_{\ld,Q_1}|^2}{2}-gy,$$ and
%$$\Gamma_i=\Gamma_{i,\ld,Q_1}:x=k_{i,\ld,Q_1}(y)=k_i(y)\ \text{for}\
%y\in(h_i,H).$$ Hence, we obtain a solution
%$(u,v,p,\Gamma_1,\Gamma_2)$ to the impinging jet flow problem.

\end{proof}

\section{The asymptotic behaviors in the far field}
Since $k_{2,\ld,Q_1}(h_2+0)=+\infty$, it follows from the similar
arguments in Step 3 in the proof of Proposition \ref{lc2} that there
exists a sequence $\{\psi_{\ld,Q_1}(x+n,y)\}$, such that
$$\psi_{\ld,Q_1}(x+n,y)\rightarrow \sqrt{2\ld-2gh_2}y+Q_1 \ \
\text{uniformly in}\ \ C^{2,\alpha}(G),$$ for any $G\subset\subset
(-\infty,+\infty)\times(0,h_2)$. This gives that
$$(-v(x,y),u(x,y))=\nabla\psi_{\ld,Q_1}(x,y)\rightarrow
(0,\sqrt{2\ld-2gh_2})$$ uniformly in any compact subset of
$(0,h_2)$, as $x\rightarrow+\infty$. It follows from Bernoulli's law
that
$$p(x,y)\rightarrow p_{atm}+g(h_2-y)\ \text{uniformly in
any compact subset of $(0,h_2)$, }$$ as $x\rightarrow+\infty$.

Furthermore, it holds that
$$\g (u,v)\rightarrow0\ \ \text{and}\ \g p\rightarrow\left(0,-g\right),$$
uniformly in any compact subset of $(0,h_2)$, as
$x\rightarrow+\infty$.

By using the similar arguments, one gets
$$\left(u,v,p\right)\rightarrow \left(-\sqrt{2\ld-2gh_1},0,
p_1(y)\right),\ \g (u,v)\rightarrow 0 \ \text{and}\ \ \g
p\rightarrow \left(0,-g\right)$$
 uniformly in any compact subset of $(0,h_1)$ as $
x\rightarrow -\infty$, where $p_1(y)=p_{atm}+g(h_1-y)$.

Next, we will obtain the asymptotic behavior of the impinging jet
flow in the upstream.
 Define the function
$\psi_n(x,y)=\psi_{\ld,Q_1}(x-n,y)$ for $x<\f n2$. For any compact
subset $G$ of $S=(-\infty,+\infty)\times(H_1,H_2)$, with the aid of
the assumptions of the nozzle walls $N_1$ and $N_2$ in the inlet, it
follows from the standard elliptic estimates that we have
\be\label{d7}\|\psi_n\|_{C^{2,\alpha}(G)}\leq C(G) \ \ \text{for
sufficiently large}\ n, \ \ 0<\alpha<1.\ee Arzela-Ascoli lemma gives
that there exists a subsequence still labeled by $\psi_n$, such that
 \be\label{d8}\psi_n\rightarrow\psi_0 \ \
\text{uniformly in}\ \ C^{2,\beta}(G)\ \ \text{for \ some}\
0<\beta<\alpha.\ee Furthermore, $\psi_0$ satisfies \be\label{d9}
 \left\{\ba{ll}\Delta\psi_0=0 \ \ \text{in}\
\ S,\\
\psi_0(x,H_1)=0,\ \psi_0(x,H_2)=Q,\\
0\leq\psi_0\leq Q\ \ \text{in}\ \ S. \ea\right. \ee

It is easy to check that the boundary value problem \eqref{d9}
possesses a unique solution \be\label{d10}\psi_0= \f{Q}{H_2-H_1}y\ \
\text{in}\ \ S.\ee

Hence, this together with \eqref{d8} yields that
$$\left(u,v\right)\rightarrow \left(\f{Q}{H_2-H_1},0\right),\ \g
(u,v)\rightarrow 0$$
 uniformly in any compact subset of $(H_1,H_2)$ as $
x\rightarrow -\infty$. It then follows form the Bernoulli's law that
$$p\rightarrow p_0(y)\ \text{and}\ \ \g p\rightarrow
\left(0,-g\right)$$
 uniformly in any compact subset of $(H_1,H_2)$ as $
x\rightarrow -\infty$, where
$p_0(y)=p_{atm}+\ld-\f{Q^2}{2(H_2-H_1)^2}-gy$.

%\begin{prop}\label{l55}The Impinging jet flow satisfies the following asymptotic behavior
%in far fields,
%$$\left(u(x,y),v(x,y),p(x,y)\right)\rightarrow \left(u_0,0, p_0\right),\ \g (u,v)\rightarrow 0\ \text{and}\ \ \g p\rightarrow (0,1)$$
% uniformly in any compact subset of $(H_2,H_1)$ as $
%x\rightarrow -\infty$, where $u_0=\f{Q_1+Q_2}{H_1-H_2}$ and
%$p_0=\mathcal{B}-\f{(Q_1+Q_2)^2}{2(H_1-H_2)^2}-\f y2$.

%Similarly, the Impinging jet flow satisfies the following asymptotic
%behavior in downstream,
%$$\left(u(x,y),v(x,y),p(x,y)\right)\rightarrow \left(\sqrt{\ld+h_1},0, p_1\right),\ \g (u,v)\rightarrow 0\ \text{and}\ \ \g p\rightarrow (0,1)$$
% uniformly in any compact subset of $(-H,-h_1)$ as $
%x\rightarrow +\infty$, and
%$$\left(u(x,y),v(x,y),p(x,y)\right)\rightarrow \left(-\sqrt{\ld+h_2},0, p_2\right),\ \g (u,v)\rightarrow 0\ \text{and}\ \ \g p\rightarrow (0,1)$$
% uniformly in any compact subset of $(-H,-h_1)$ as $
%x\rightarrow -\infty$ where $p_1=\mathcal{B}-\f{\ld+h_1+y}2$ and
%$p_2=\mathcal{B}-\f{\ld+h_2+y}2$, $h_1$ and $h_2$ are determined
%uniquely by$$Q_1=\sqrt{\ld+h_1}(H-h_1)\ \ \text{and}\ \
%Q_2=\sqrt{\ld+h_2}(H-h_2).$$
%\end{prop}

\section{The properties of the interface $\Gamma$}

In this section, we will investigate the properties of the interface
$\Gamma=\O\cap\{\psi_{\ld,Q_1}=Q_1\}$ between two fluids with
different downstreams. This is another important difference between
the impinging jet flows and the general jet flows.

\begin{prop}\label{lg1} The interface
$\Gamma=\O\cap\{\psi_{\ld,Q_1}=Q_1\}$ can be denoted by $x=k(y)$ for
$y\in(0,H_3)$, where $H_3=\f{Q_1(H_2-H_1)}{Q}+H_1$. Furthermore,
$\lim_{y\rightarrow0^+}k(y)$ exists and is finite, and $k'(0+0)=0$.

\end{prop}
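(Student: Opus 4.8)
The plan is to argue in three stages: first represent $\Gamma$ as a monotone graph over $(0,H_3)$, then reduce the behaviour at the ground to the study of a reflected harmonic function, and finally pin down the leading-order profile at the contact point using the monotonicity in $x$. First I would exploit the strict monotonicity of $\psi_{\ld,Q_1}$ in $x$: by Theorem \ref{lc4}, $v=-\p_x\psi_{\ld,Q_1}<0$ throughout $\O\cap\{0<\psi_{\ld,Q_1}<Q\}$, so $\p_x\psi_{\ld,Q_1}>0$ there, and hence on each horizontal line $\{y=c\}$ the map $x\mapsto\psi_{\ld,Q_1}(x,c)$ is strictly increasing. Thus $\{\psi_{\ld,Q_1}=Q_1\}$ meets it in at most one point, and since $Q_1\in(0,Q)$ by Proposition \ref{lc3}, the implicit function theorem gives a local $C^1$ graph $x=k(y)$. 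To obtain existence of $k(y)$ for every $y\in(0,H_3)$, I would use the far-field limits of Section 4: for $0<y<H_3$ one has $\lim_{x\to-\infty}\psi_{\ld,Q_1}(x,y)<Q_1$ (it equals $Q_1-\sqrt{2\ld-2gh_1}\,y$ for $y<h_1$, $0$ for $h_1<y<H_1$, and $\f{Q}{H_2-H_1}(y-H_1)$ for $H_1<y<H_3$, each strictly below $Q_1$ precisely because $H_3$ is the height at which the upstream profile attains $Q_1$), while $\lim_{x\to+\infty}\psi_{\ld,Q_1}(x,y)>Q_1$. The intermediate value theorem in $x$, together with the uniqueness above, produces a well-defined finite value $k(y)$ on $(0,H_3)$, identifying $\Gamma$ with this graph.

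Next I would analyse the endpoint on the ground $N=\{y=0\}$, where $\psi_{\ld,Q_1}\equiv Q_1$ and $v=-\p_x\psi_{\ld,Q_1}=0$. I extend $\psi_{\ld,Q_1}$ across $N$ by the odd reflection $\Psi(x,y)=2Q_1-\psi_{\ld,Q_1}(x,-y)$ for $y<0$; since $\psi_{\ld,Q_1}$ is harmonic and takes the constant value $Q_1$ on $N$, the function $\Psi$ is harmonic in a full neighbourhood of each ground point and $\Psi-Q_1$ is odd in $y$. The downstream asymptotics of Section 4 give $u=\p_y\psi_{\ld,Q_1}(x,0)\to-Q_1/h_1<0$ as $x\to-\infty$ and $\to(Q-Q_1)/h_2>0$ as $x\to+\infty$, so by continuity there is a point $S=(x_S,0)$ with $u(S)=0$; combined with $\p_x\psi_{\ld,Q_1}(S)=0$ this yields $\g\psi_{\ld,Q_1}(S)=0$, i.e.\ $S$ is a stagnation point. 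Uniqueness of $S$ would follow from applying Hopf's lemma to the nonnegative harmonic function $-v=\p_x\psi_{\ld,Q_1}$ in $\{y>0\}$ (or from the monotonicity of $u$ along $N$).

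Finally I would determine the local geometry at $S$. Since $\g\Psi(S)=0$ and $\Psi-Q_1$ is harmonic and odd in $y$, in polar coordinates $(\rho,\th)$ centred at $S$ its leading term is a homogeneous harmonic polynomial $P_m=c\,\rho^m\sin(m\th)$ with $c\neq0$ and $m\ge2$. The decisive input is the monotonicity: $\p_x\psi_{\ld,Q_1}\ge0$ in $\{y>0\}$ forces $\p_xP_m=cm\,\rho^{m-1}\sin((m-1)\th)\ge0$ for all $\th\in(0,\pi)$, and since $\sin((m-1)\th)$ changes sign on $(0,\pi)$ whenever $m\ge3$, this is possible only for $m=2$, with $c>0$. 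Hence $P_2=2c\,(x-x_S)\,y$, whose zero set in $\{y>0\}$ is the vertical ray $\{x=x_S\}$; the unique interior branch of $\{\psi_{\ld,Q_1}=Q_1\}$ issuing from $S$ is therefore tangent to $\{x=x_S\}$. This shows simultaneously that $\lim_{y\to0^+}k(y)=x_S$ exists and is finite and that $k'(0+0)=0$.

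The main obstacle is this last stage: rigorously controlling the leading expansion of the reflected harmonic function at the degenerate stagnation point $S$ and combining it with the sign condition from monotonicity to exclude $m\ge3$, while also verifying that no other branch of the zero set of $\Psi-Q_1$ enters the fluid near $S$ (again a consequence of $\p_x\psi_{\ld,Q_1}>0$). By contrast, the uniqueness of $S$ and the boundedness of $k$ as $y\to0^+$ become routine once this local structure is established.
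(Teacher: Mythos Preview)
Your proof is correct and takes a genuinely different route from the paper's. The paper proceeds in three separate steps: it first shows that $\lim_{y\to 0^+}k(y)$ exists by a contradiction argument (two distinct subsequential limits would force $\partial_y\psi_{\ld,Q_1}=0$ on a ground interval, and then comparing $\psi_{\ld,Q_1}$ with its own horizontal $\varepsilon$-translate and applying Hopf's lemma at an interior point of that interval yields a contradiction); it then shows the limit is finite by an integral/divergence estimate in a far-field box; and only afterwards, having located $S=(k(0),0)$, does it invoke the odd reflection and the equal-angle structure of the nodal set of a harmonic function to obtain $k'(0^+)=0$. You instead locate the stagnation point $S$ first, by the intermediate value theorem applied to $u(\cdot,0)$ (whose asymptotic signs follow from Section~4), and then extract all three conclusions---existence, finiteness, and perpendicularity---in one stroke from the local expansion: oddness of the reflected harmonic function forces the leading term to be $c\rho^m\sin(m\theta)$, and the sign constraint $\partial_x\psi_{\ld,Q_1}>0$ in the upper half plane forces $m=2$ because $\sin((m-1)\theta)$ changes sign on $(0,\pi)$ for $m\ge 3$. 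This is more unified and makes the role of the $x$-monotonicity in selecting the vertical tangent completely explicit, whereas the paper uses the monotonicity only implicitly (through the uniqueness of the upper nodal arc). Two small points to tighten: the phrase ``$\lim_{x\to-\infty}\psi_{\ld,Q_1}(x,y)$'' is literally ill-posed for $H<y<H_1$, where the slice is bounded on the left by $N_1$; what you actually need (and what you have) is that $\psi_{\ld,Q_1}$ attains a value $<Q_1$ somewhere on each horizontal slice, which is immediate from the boundary value $0$ on $N_1$ or $\Gamma_{1,\ld,Q_1}$. Second, the far-field limits of $u$ along $N$ require the asymptotic convergence up to the flat boundary $y=0$, which follows from reflecting across $N$ and applying interior estimates---worth stating, since Section~4 only asserts convergence on compact subsets of $(0,h_i)$.
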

\begin{proof}

Recall that $\O_0=\O\cap\{0<\psi_{\ld,Q_1}<Q\}$. Since $v<0$ in
$\O_0$, the interface $\Gamma$ is a $y$-graph, and the implicit
function theorem implies that $\Gamma$ can be denoted by a
$C^1$-smooth function $x=k(y)$ for any $y\in(0,H_3)$, where
$H_3=\f{Q_1(H_2-H_1)}{Q}+H_1$ is nothing but the asymptotic height
of the interface $\Gamma$ in upstream. It follows from the
asymptotic behavior of $\psi_{\ld,Q_1}$ in Section 4 that
$$\lim_{y\rightarrow H_3^-}k(y)=-\infty\ \ \text{and $k(y)$ is finite for any $y\in(0,H_3)$}.$$

Next, we will show that \be\label{e1}\lim_{y\rightarrow 0^+}k(y)\ \
\text{exists and is finite}.\ee

Suppose that there exist two sequences $y_n\downarrow0$ and $\t
y_n\downarrow0$, such that
\be\label{e2}\lim_{n\rightarrow+\infty}k(y_n)=x_1\ \ \text{and}\ \
\lim_{n\rightarrow+\infty}k(\t y_n)=x_2.\ee

Without loss of generality, one may assume that $x_1>x_2$. We claim
that \be\label{e3}\f{\p\psi_{\ld,Q_1}}{\p y}=0\ \ \text{on the
segment $I=\{(x,0)\mid x_2<x<x_1\}$}.\ee In fact, suppose that there
exists a point $X_0=(x_0,0)\in I$, such that $$\text{
$\f{\p\psi_{\ld,Q_1}}{\p y}\neq0$ at $X_0$.}$$ Without loss of
generality, assume that $\f{\p\psi_{\ld,Q_1}(X_0)}{\p y}>0$. Then
one has \be\label{e4}\psi_{\ld,Q_1}(x_0,y)<Q_1\ \ \text{for}\ \
0<y<\e,\ee for small $\e>0$, due to $\psi_{\ld,Q_1}(x_0,0)=Q_1$.

Due to the monotonicity of $\psi(x,y)$ with respect to $x$, it
follows from \eqref{e2} that
$$\psi_{\ld,Q_1}(x_0,\t y_n)\geq\psi_{\ld,Q_1}(k(\t y_n),\t y_n)=Q_1\ \text{and}\
\psi_{\ld,Q_1}(x_0,y_n)\leq\psi_{\ld,Q_1}(k(y_n),y_n) =Q_1,$$ for
any small $\t y_n,y_n\in(0,\e)$, which contradicts to \eqref{e4}.

Take $X_0=(x_0,0)\in I$, then $x_2<x_0-\e<x_1$ for small $\e>0$.
Denote $\psi_\e(x,y)=\psi_{\ld,Q_1}(x-\e,y)$, the monotonicity of
$\psi_{\ld,Q_1}(x,y)$ with respect to $x$ implies that
$$\psi_\e(x,y)\leq\psi_{\ld,Q_1}(x,y)\ \ \text{in $\O$.}$$ In view of $\psi_{\ld,Q_1}=\psi_\e=Q_1$ at $X_0$, then Hopf's
lemma shows that
$$\f{\p(\psi_{\ld,Q_1}-\psi_\e)}{\p y}>0\ \ \text{at $X_0$},$$ which contradicts to
\eqref{e3}. Hence, $\lim_{y\rightarrow 0^+}k(y)$ exists.

Next, we will show that
$$-\infty<\lim_{y\rightarrow0^+}k(y)<+\infty.$$
If not, without loss of generality, one may assume that
$\lim_{y\rightarrow0^+}k(y)=+\infty$. The asymptotic behavior of
$\psi_{\ld,Q_1}$ implies that there exists a sufficiently large
$R_0>0$, such that
\be\label{e5}\text{$\sqrt{2\ld-2gH}\leq|\g\psi_{\ld,Q_1}|\leq
\sqrt{2\ld}$}\ee  in any subdomain of $\{(x,y)\mid x\geq R_0,
0<y<h_2\}$. Denote $G=\{(x,y)\mid x<k(y), 0<y<h_2\}$. Then one has
$$\Delta\psi_{\ld,Q_1}=0\ \ \text{and $\psi_{\ld,Q_1}<Q_1$ in}\ \ G_{R_0},$$ where
$G_{R_0}=G\cap\{R_0<x<2R_0\}$. The maximum principle gives that
\be\label{e6}\f{\p\psi_{\ld,Q_1}}{\p\nu}\geq 0\ \ \text{on}\ \ N,\ \
\text{and}\ \ \f{\p\psi_{\ld,Q_1}}{\p\nu}\geq \sqrt{2\ld-2gH}\ \
\text{on}\ \ \Gamma,\ee where $\nu$ is the outer normal vector.

It follows from \eqref{e5} and \eqref{e6} that
$$\ba{rl}\sqrt{2\ld-2gH}R_0\leq&\int_{(\Gamma\cup N)\cap\{R_0<x<2R_0\}}\f{\p\psi_{\ld,Q_1}}{\p\nu}dS\\
=&\int_{G_{R_0}}\Delta\psi_{\ld,Q_1}dxdy\\
&-\int_{\p G_{R_0}\cap\{x=R_0\}}\f{\p \psi_{\ld,Q_1}} {\p
x}dy+\int_{\p
G_{R_0}\cap\{x=2R_0\}}\f{\p \psi_{\ld,Q_1}} {\p x}dy\\
\leq& 2\sqrt{2\ld}H,\ea $$ which leads to a contradiction, provided
that $R_0$ is sufficiently large.

Finally, we will show that the interface $\gamma$ intersects the
ground $N$ perpendicularly.

Denote $S=(k(0),0)$ as the stagnation point. Let
$\t\psi(x,y)=\psi_{\ld,Q_1}(k(0)+x,y)-Q_1$. It follows from
Proposition \ref{lb1} that $\t\psi(x,y)$ is harmonic in
$B_r(0)\cap\{y>0\}$ for some $r>0$ and $\t\psi$ vanishes on
$B_r(0)\cap\{y=0\}$. Hence, $\t\psi$ can be extended to a harmonic
function in $B_r(0)$. Then $\{\t\psi=0\}$ consists of arcs forming
equal angles at the origin. By virtue of the previous arguments,
there exists a continuous arc $\gamma$ initiating at $0$ and
$\t\psi$ vanishes on $\gamma$. Hence, $\gamma$ must intersect
$\{y>0\}$ orthogonally at $0$, which implies that
$$k'(0+0)=0.$$

\end{proof}

\section{Appendix}

In this section, we will list some important lemmas for the
minimizer $\psi_{\ld,Q_1,\mu}$, which have been established in
\cite{AC1,ACF1,ACF3}.

It follows from Lemma 3.2 in \cite{AC1} and Lemma 3.1 in \cite{ACF1}
that the following lemma holds.
\begin{lemma}\label{lf1}
There exists a universal constant $C^*$ such that, for any disc
$B_r(X_0)\subset\Omega_\mu$, if
$$\f1r\fint_{\partial B_r(X_0)}\psi_{\ld,Q_1,\mu} dS\geq C^*\sqrt{2\ld},\ \ \text{then $\psi_{\ld,Q_1,\mu}>0$ in $B_r(X_0)$}.$$
Similarly, if $$\f1r\fint_{\partial B_r(X_0)}(Q-\psi_{\ld,Q_1,\mu})
dS\geq C^*\sqrt{2\ld},\ \ \text{then $\psi_{\ld,Q_1,\mu}<Q $ in
$B_r(X_0)$}.$$
\end{lemma}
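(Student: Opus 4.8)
The plan is to adapt the Alt--Caffarelli non-degeneracy argument (Lemma 3.2 in \cite{AC1}, Lemma 3.1 in \cite{ACF1}) to the present functional $J_{\ld,\mu}$; the only genuinely new feature is the $y$-dependent weight $2\ld-2gy$, and the key point is that this weight satisfies $0\le 2\ld-2gy\le 2\ld$ on $D_\mu$ (since $0\le y\le H$ and $\ld\ge gH$), so that every estimate scales cleanly with $\sqrt{2\ld}$, matching the right-hand side of the hypothesis. Throughout I write $\psi=\psi_{\ld,Q_1,\mu}$ and fix a disc $B_r(X_0)\Subset\O_\mu$. First I would introduce the harmonic replacement $v$ of $\psi$ in $B_r(X_0)$, namely $\Delta v=0$ in $B_r(X_0)$ with $v=\psi$ on $\p B_r(X_0)$, extended by $v=\psi$ outside. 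Since $\psi$ is harmonic on $\{0<\psi<Q\}$ while the free-boundary speed $|\g\psi|=\sqrt{2\ld-2gy}$ forces $\Delta\psi$ to be a nonnegative measure, $\psi$ is subharmonic; hence $v\ge\psi$ in $B_r(X_0)$ by the maximum principle, and $w:=v-\psi\ge0$ vanishes on $\p B_r(X_0)$.

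Next I would compare energies. The extended $v$ is an admissible competitor, so minimality gives $J_{\ld,\mu}(\psi)\le J_{\ld,\mu}(v)$; combining the orthogonality identity $\int_{B_r}(|\g\psi|^2-|\g v|^2)=\int_{B_r}|\g w|^2$ with $0\le 2\ld-2gy\le 2\ld$ yields $\int_{B_r(X_0)}|\g w|^2\,dxdy\le 2\ld\,\pi r^2$. A Poincar\'e inequality on $B_r(X_0)$ (recall $w=0$ on $\p B_r(X_0)$) then bounds the $L^2$-average of $w$ over $B_{r/2}(X_0)$ by $C\sqrt{\ld}\,r$. On the other hand $v$ is a nonnegative harmonic function with $v(X_0)=\fint_{\p B_r(X_0)}\psi\,dS\ge C^*\sqrt{2\ld}\,r$, so Harnack's inequality gives $v\ge c_0C^*\sqrt{2\ld}\,r$ throughout $B_{r/2}(X_0)$ for a dimensional constant $c_0\in(0,1)$. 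Choosing $C^*$ large enough that $c_0C^*$ dominates the Poincar\'e constant, a Chebyshev estimate shows that $w<v$ on all but a small-measure subset of $B_{r/2}(X_0)$, whence $\psi=v-w>0$ there; the upgrade to positivity at every point, and the extension from $B_{r/2}$ to all of $B_r(X_0)$, is then closed by a standard covering argument together with the non-degeneracy of the minimizer (if $\psi(Y)=0$, then $\{\psi=0\}$ occupies a definite proportion of every small ball about $Y$, contradicting the measure estimate just obtained). The second assertion follows by running the identical argument for $Q-\psi$, which is again nonnegative, harmonic on $\{0<\psi<Q\}$, subharmonic, and carries the same free-boundary speed $\sqrt{2\ld-2gy}$, because the density $\chi_{\{0<\psi<Q\}}$ is invariant under $\psi\mapsto Q-\psi$.

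The main obstacle is not the comparison scheme, which is classical, but the \emph{uniformity} of the constant: one must verify that $C^*$ can be chosen independent of $\ld$, $Q_1$, $\mu$, $r$ and $X_0$. This is exactly why it is essential that the weight enters only through the bound $2\ld-2gy\le 2\ld$, so that the Dirichlet-energy defect is controlled by $2\ld\,\pi r^2$ and the whole inequality is scale-invariant under $\psi\mapsto\psi/\sqrt{2\ld}$, $x\mapsto x/r$. The secondary delicate point is the passage from positivity in measure on $B_{r/2}$ to genuine pointwise positivity on the full disc $B_r(X_0)$, for which I would invoke the non-degeneracy estimates of \cite{AC1,ACF1} rather than reprove them, since Lemma~\ref{lf1} is precisely the specialization of those results to the weight $2\ld-2gy$.
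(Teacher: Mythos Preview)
The paper gives no proof; it merely records that the lemma is a direct instance of Lemma~3.2 in \cite{AC1} and Lemma~3.1 in \cite{ACF1}, so there is no detailed argument to compare against. Your sketch follows the right template, but two points need correction. First, your claim that $\psi$ is subharmonic is false here: $\Delta\psi$ is a \emph{negative} measure along $\p\{\psi<Q\}$, so $w=v-\psi$ can change sign if $B_r$ meets that free boundary. This is harmless, however, since your Poincar\'e--Chebyshev step only uses $\int_{B_r}|\g w|^2$ and works equally well with $|w|$.

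The real gap is the closing step. You upgrade ``$\psi>0$ on nearly all of $B_{r/2}$'' to ``$\psi>0$ everywhere in $B_r$'' by appealing to the positive density of $\{\psi=0\}$ at each of its points; but that density statement (Theorem~4.5 in \cite{AC1}) is itself derived from Lemma~\ref{lf1} together with Lemma~\ref{lf3}, so the argument as written is circular. The proof in \cite{AC1} avoids this by using the \emph{sharp} energy defect $\int_{B_r}|\g w|^2\le 2\ld\,|\{\psi=0\}\cap B_r\cap D|$ rather than your crude bound $2\ld\,\pi r^2$, together with the Poisson--kernel lower bound $v(x)\ge c\,r^{-1}\,\mathrm{dist}(x,\p B_r)\,v(X_0)$, valid on the \emph{full} ball, and a Hardy-type inequality; these combine to give $(C^*)^2|\{\psi=0\}\cap B_r|\le C\,|\{\psi=0\}\cap B_r|$, forcing $|\{\psi=0\}\cap B_r|=0$ directly once $C^*$ exceeds a dimensional constant. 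In the present two-sided functional one must also choose the competitor so that the phase $\{\psi=Q\}$ does not contaminate the energy defect with an extra $|\{\psi=Q\}\cap B_r|$ term---this is precisely why the paper cites \cite{ACF1} in addition to \cite{AC1}.
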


By using the similar arguments for Lemma 3.4 in \cite{AC1} and Lemma
2.4 in \cite{ACF3}, one can have the following lemma.
\begin{lemma}\label{lf3} There exists a universal positive constant $c^*$,
such that for any disc $B_r(X_0)$ with $X_0\in D_\mu$, if
$$\f{1}{r}\fint_{\p B_r(X_0)}\psi_{\ld,Q_1,\mu} dS\leq c^*\sqrt{2\ld-2gH},~~and~~\psi_{\ld,Q_1,\mu}<Q~~in~~B_r(X_0),$$
then $\psi_{\ld,Q_1,\mu}=0$ in $B_{\f r8}(X_0)\cap D_\mu$;
similarly, if
$$\f{1}{r}\fint_{\p B_r(X_0)}\left(Q-\psi_{\ld,Q_1,\mu}\right) dS\leq
c^*\sqrt{2\ld-2gH},~~and~~\psi_{\ld,Q_1,\mu}>0~~in~~B_r(X_0),$$ then
$\psi_{\ld,Q_1,\mu}=Q$ in $B_{\f r8}(X_0)\cap D_\mu$.\end{lemma}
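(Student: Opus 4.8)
The plan is to run the Alt--Caffarelli variational non-degeneracy argument, adapted to the gravity weight $2\ld-2gy$ and localized to the strip $D_\mu=\{0<y<H\}$. First I would reduce to the first assertion alone: since $Q-\psi_{\ld,Q_1,\mu}$ is a minimizer of the same functional with the level sets $\{\psi=0\}$ and $\{\psi=Q\}$ interchanged, the second statement follows verbatim by applying the first to $Q-\psi_{\ld,Q_1,\mu}$. Write $\psi=\psi_{\ld,Q_1,\mu}$ and $B_r=B_r(X_0)$. The hypothesis $\psi<Q$ in $B_r$ means that on $B_r$ the characteristic function in $J_{\ld,\mu}$ reduces to $\chi_{\{\psi>0\}}$, so only the lower free boundary $\partial\{\psi>0\}$ is active and the $\{\psi=Q\}$--phase never enters the comparison.

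Next I would record the pointwise smallness of $\psi$. By Proposition \ref{lb1} the function $\psi\ge0$ is harmonic in $\{0<\psi<Q\}$, and by minimality it is subharmonic across $\partial\{\psi>0\}$; hence it is a nonnegative subharmonic function on $B_r$, and the Poisson representation yields
$$\sup_{B_{r/2}}\psi\le C\fint_{\partial B_r}\psi\,dS\le C\,c^*\sqrt{2\ld-2gH}\,r=:M.$$
I would then build the competitor $g(X)=M\,\eta(|X-X_0|/r)$, with $\eta$ a fixed radial profile satisfying $\eta\equiv0$ on $[0,\tfrac14]$, $\eta(\tfrac12)=1$ and $|\eta'|\le C$, and set $\tilde\psi=\min\{\psi,g\}$ on $B_{r/2}$ and $\tilde\psi=\psi$ elsewhere. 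Because $g=M\ge\sup_{B_{r/2}}\psi$ on $\partial B_{r/2}$, the function $\tilde\psi$ matches $\psi$ continuously there, so $\tilde\psi\in K_{\ld,Q_1,\mu}$; moreover $\tilde\psi\equiv0$ in $B_{r/4}$ and $\{0<\tilde\psi<Q\}\subset\{0<\psi<Q\}$, so no new positivity set is created.

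Finally I would compare energies through $J_{\ld,\mu}(\psi)\le J_{\ld,\mu}(\tilde\psi)$. Outside $\{\psi>g\}$ the Dirichlet integrands agree, and the measure term can only decrease, changing exactly on $\{\psi>0\}\cap B_{r/4}$. Using the lower bound $2\ld-2gy\ge 2\ld-2gH$ on $D_\mu$ for the measure gain and $|\nabla g|^2\le CM^2/r^2$ for the Dirichlet cost, I obtain
$$(2\ld-2gH)\,\big|\{\psi>0\}\cap B_{r/4}\cap D_\mu\big|\le \int_{B_{r/2}\setminus B_{r/4}}|\nabla g|^2\,dX\le C M^2\le C(c^*)^2(2\ld-2gH)\,r^2.$$
The common factor $2\ld-2gH$ cancels, leaving the universal bound $|\{\psi>0\}\cap B_{r/4}\cap D_\mu|\le C(c^*)^2 r^2$ with $C$ independent of $\ld,Q_1,\mu$; this is precisely why the smallest gradient size $\sqrt{2\ld-2gH}$ along the free boundary fixes the admissible threshold $c^*\sqrt{2\ld-2gH}$. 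Choosing $c^*$ small forces the positivity set to occupy an arbitrarily small fraction of $B_{r/4}$, and I would upgrade this to $\psi\equiv0$ in $B_{r/8}\cap D_\mu$ by invoking the positive density estimate for $\{\psi>0\}$ at free boundary points from \cite{AC1}: any interior positive point or free boundary point inside $B_{r/8}$ would force $|\{\psi>0\}\cap B_{r/4}|\ge c\,r^2$, contradicting the smallness once $c^*$ is chosen. I expect this last step---converting the measure bound into genuine vanishing, together with carrying the $y$--dependent weight correctly through the localization to $D_\mu$ so that $c^*$ stays uniform in $\ld$---to be the main technical obstacle; the symmetric $\{\psi=Q\}$ statement then follows by the reduction of the first paragraph.
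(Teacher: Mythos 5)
Your overall scheme (reduction of the second assertion via the symmetry $\psi\mapsto Q-\psi$, subharmonicity of $\psi$ in $B_r$ granted by the hypothesis $\psi<Q$, the sup bound $M\le C\,c^*\sqrt{2\lambda-2gH}\,r$, and a truncation competitor vanishing on an inner ball) is the Alt--Caffarelli scheme that the paper itself invokes by citation (it gives no proof, referring to Lemma 3.4 of \cite{AC1} and Lemma 2.4 of \cite{ACF3}). But your final step has a genuine gap, and it is precisely the step that makes the lemma true. Your energy comparison throws away the Dirichlet term $\int_{B_{r/4}}|\nabla\psi|^2$ and bounds the cost of the competitor crudely by $CM^2$, which yields only
$$(2\lambda-2gH)\,\bigl|\{\psi>0\}\cap B_{r/4}\cap D_\mu\bigr|\le C(c^*)^2(2\lambda-2gH)\,r^2,$$
i.e. \emph{smallness} of the positivity set, not its vanishing. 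To upgrade smallness to $\psi\equiv 0$ you appeal to ``the positive density estimate for $\{\psi>0\}$ at free boundary points from \cite{AC1}''. That appeal is circular: in \cite{AC1} the lower density bound at free boundary points is itself deduced from the non-degeneracy lemma (their Lemma 3.4, the very statement being proved here) combined with the Lipschitz estimate. Without non-degeneracy, a point where $\psi>0$ only guarantees a positivity ball of radius comparable to $\psi/\mathrm{Lip}(\psi)$, which can be arbitrarily small, so no lower density bound is available and no contradiction can be extracted from $|\{\psi>0\}\cap B_{r/4}|\le C(c^*)^2r^2$.

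The correct mechanism is absorption, and it requires keeping both terms that minimality produces. Take the competitor $v=\min\{\psi,M\omega\}$ with $\omega$ harmonic in $B_{r/2}\setminus\overline{B_{r/4}}$, $\omega=1$ on $\partial B_{r/2}$, $\omega=0$ in $\overline{B_{r/4}}$; then $J_{\ld,\mu}(\psi)\le J_{\ld,\mu}(v)$ together with an integration by parts (using harmonicity of $\omega$, not a crude $L^\infty$ bound on $|\nabla g|$) gives
$$\int_{B_{r/4}}|\nabla\psi|^2\,dX+(2\lambda-2gH)\,\bigl|\{\psi>0\}\cap B_{r/4}\cap D_\mu\bigr|\le \frac{CM}{r}\int_{\partial B_{r/4}}\psi\,dS .$$
Writing $A=\int_{B_{r/4}}|\nabla\psi|^2$ and $V=|\{\psi>0\}\cap B_{r/4}\cap D_\mu|$, one then estimates the boundary integral by a trace inequality plus Cauchy--Schwarz, $\int_{\partial B_{r/4}}\psi\,dS\le C\bigl(A^{1/2}V^{1/2}+\tfrac{M}{r}V\bigr)$, using $\psi\le M\chi_{\{\psi>0\}}$, so that
$$A+(2\lambda-2gH)V\le \tfrac12A+\frac{CM^2}{r^2}V .$$
Since $M\le C\,c^*\sqrt{2\lambda-2gH}\,r$, choosing $c^*$ small makes $CM^2/r^2<\tfrac12(2\lambda-2gH)$, and the inequality forces $A=V=0$; continuity then gives $\psi\equiv 0$ in $B_{r/4}\cap D_\mu\supset B_{r/8}\cap D_\mu$. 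It is this absorption --- the measure term appearing on both sides, weighted below by $2\lambda-2gH$ on the left and by $M^2/r^2$ on the right --- that converts smallness into vanishing; it cannot be replaced by density estimates, which in this theory lie downstream of the lemma. (A secondary point you should also address: your competitor must remain admissible when $B_{r/2}$ reaches $\partial\Omega_\mu$, e.g. the ground $N_\mu$ where $\psi=Q_1>0$, since the lemma only assumes $X_0\in D_\mu$.)
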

Lemma \ref{lf3} implies the following non-degeneracy
 lemma.
\begin{lemma}\label{lf4}For any $X_0\in \overline{\{\psi_{\ld,Q_1,\mu}>0\}\cap D_\mu}$, if $\psi_{\ld,Q_1,\mu}<Q$ in $B_r(X_0)$ for some $r>0$,
 then \be\label{f25}\f{1}{r}\fint_{\p B_r(X_0)}\psi_{\ld,Q_1,\mu} dS\geq c^*\sqrt{2\ld-2gH}.\ee In
 particular,
 \be\label{f26}\sup_{\p B_r(X_0)}\psi_{\ld,Q_1,\mu}\geq c^*\sqrt{2\ld-2gH} r.\ee
Similarly, the result holds with $\psi_{\ld,Q_1,\mu}$ replaced by
$Q-\psi_{\ld,Q_1,\mu}$.

 \end{lemma}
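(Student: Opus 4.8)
The plan is to deduce the lower bound \eqref{f25} directly from Lemma \ref{lf3} by a contrapositive argument, and then read off \eqref{f26} by comparing an average with a supremum. Write $\psi=\psi_{\ld,Q_1,\mu}$ for brevity. First I would argue by contradiction: suppose that $\f{1}{r}\fint_{\p B_r(X_0)}\psi\, dS < c^*\sqrt{2\ld-2gH}$, with the same universal constant $c^*$ furnished by Lemma \ref{lf3}. Since by hypothesis $\psi<Q$ throughout $B_r(X_0)$, both premises of the first assertion of Lemma \ref{lf3} are in force, and therefore $\psi\equiv 0$ in $B_{r/8}(X_0)\cap D_\mu$.

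The contradiction comes from the membership $X_0\in\overline{\{\psi>0\}\cap D_\mu}$. This gives a sequence $X_n\to X_0$ with $X_n\in D_\mu$ and $\psi(X_n)>0$; for $n$ large, $|X_n-X_0|<r/8$, so $X_n\in B_{r/8}(X_0)\cap D_\mu$ and yet $\psi(X_n)>0$, contradicting $\psi\equiv 0$ there. Hence the assumed strict inequality is impossible, which is exactly \eqref{f25}. For the pointwise bound \eqref{f26}, I would simply use $\fint_{\p B_r(X_0)}\psi\, dS\leq\sup_{\p B_r(X_0)}\psi$, so that \eqref{f25} yields $\sup_{\p B_r(X_0)}\psi\geq\fint_{\p B_r(X_0)}\psi\, dS\geq c^*\sqrt{2\ld-2gH}\,r$. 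The assertions with $\psi$ replaced by $Q-\psi$ follow verbatim, invoking the second (symmetric) half of Lemma \ref{lf3} in place of the first and the membership $X_0\in\overline{\{\psi<Q\}\cap D_\mu}$.

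The one genuinely delicate point is the interface between the two lemmas: Lemma \ref{lf3} is phrased for centers $X_0\in D_\mu$, whereas here $X_0$ lies in $\overline{\{\psi>0\}\cap D_\mu}$ and may sit on $\p D_\mu$. I would handle this by observing that the conclusion $\psi\equiv 0$ in $B_{r/8}(X_0)\cap D_\mu$ is insensitive to whether the center $X_0$ itself belongs to $D_\mu$; what is actually used is that $B_{r/8}(X_0)$ meets $\{\psi>0\}\cap D_\mu$, and this is precisely what the closure hypothesis guarantees through the approximating sequence $\{X_n\}$. Beyond this bookkeeping, the proof is nothing more than the contrapositive of Lemma \ref{lf3} combined with the elementary fact that a mean value never exceeds a supremum, so I do not expect any substantive obstacle.
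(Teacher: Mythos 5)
Your proof is correct and matches the paper's approach: the paper gives no separate argument, stating only that Lemma \ref{lf3} implies the non-degeneracy lemma, which is precisely the contrapositive argument you spell out (together with the trivial bound of a mean by a supremum for \eqref{f26}). Your explicit handling of the case where $X_0$ lies on $\p D_\mu$ via the approximating sequence is a reasonable way to fill in the bookkeeping the paper leaves implicit.
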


It follows from Lemma 5.1 in \cite{ACF1} that we have the following
bounded gradient lemma.
\begin{lemma}\label{lf7}
Let $X_0=(x_0,y_0)$ be a free boundary point in $D_\mu$ and
$B_r(X_0)\subset B_R(X_0)\subset D_\mu$. Then
$$|\nabla\psi_{\ld,Q_1,\mu}(x,y)|\leq C\ \ \text{in}\ \ B_r(X_0),$$ where
$C$ depends only on $\ld$ and $\left(1-\f rR\right)^{-1}$, but it is
independent of $Q$.
\end{lemma}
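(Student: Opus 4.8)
The plan is to derive the bound from the non-degeneracy machinery already recorded in the appendix (Lemma \ref{lf1}, Lemma \ref{lf3}, Lemma \ref{lf4}) together with interior gradient estimates for harmonic functions. Write $\psi=\psi_{\ld,Q_1,\mu}$. A preliminary observation fixes the role of $\ld$: on $D_\mu$ one has $0<y<H$, so $2\ld-2gH\leq 2\ld-2gy\leq 2\ld$, and since $\ld\geq\f{Q^2}{8H^2}+gH>gH$ every quantity $\sqrt{2\ld-2gy}$ below is comparable to $\sqrt{\ld}$. The decisive point for $Q$-independence is that one must never invoke the trivial bound $0\leq\psi\leq Q$, but instead the linear growth of $\psi$ away from the free boundary, whose rate is $\sqrt{\ld}$ and does not involve $Q$. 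To establish this growth, note that in a neighborhood of the positivity free boundary $\p\{\psi>0\}$ the function $\psi$ is nonnegative, harmonic where positive and identically zero otherwise, hence locally subharmonic, so $\rho\mapsto\f1\rho\fint_{\p B_\rho(Z)}\psi\,dS$ is monotone for a free boundary point $Z$. By Lemma \ref{lf1} this normalized average must stay below $C^*\sqrt{2\ld}$ for every admissible $\rho$ (otherwise $\psi>0$ in $B_\rho(Z)$, contradicting $Z\in\p\{\psi>0\}$), whence $\fint_{\p B_\rho(Z)}\psi\,dS\leq C^*\sqrt{2\ld}\,\rho$ whenever $B_\rho(Z)\subset D_\mu$; averaging in $\rho$ and using the sub-mean-value inequality upgrades this to $\sup_{B_{\rho/2}(Z)}\psi\leq C\sqrt{\ld}\,\rho$. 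The same argument applied to $Q-\psi$, together with the companion statement in Lemma \ref{lf3}, gives $\sup_{B_{\rho/2}(Z)}(Q-\psi)\leq C\sqrt{\ld}\,\rho$ near the saturation free boundary $\p\{\psi<Q\}$.

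With the growth in hand, I would fix $X\in B_r(X_0)$ and argue pointwise. If $\psi(X)\in\{0,Q\}$ with $X$ interior to a level set, then $\g\psi(X)=0$. Otherwise $0<\psi(X)<Q$; set $\delta=\text{dist}(X,\p\{0<\psi<Q\})$, so $\psi$ is harmonic in $B_\delta(X)$, and let $Z$ be a nearest free boundary point, $|X-Z|=\delta$. If $Z\in\p\{\psi>0\}$ then $B_\delta(X)\subset B_{2\delta}(Z)$ and Step~1 gives $\sup_{B_\delta(X)}\psi\leq C\sqrt{\ld}\,\delta$, while if $Z\in\p\{\psi<Q\}$ the same bound holds for $Q-\psi$. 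In either case the interior gradient estimate for harmonic functions yields
$$|\g\psi(X)|\leq\f{C}{\delta}\sup_{B_\delta(X)}\psi\leq C\sqrt{\ld},$$
provided the enlarged ball $B_{2\delta}(Z)$ remains inside $D_\mu$. The key feature is that $\delta$ cancels, so neither $\delta$ nor $Q$ survives.

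The enlargement is legitimate only when $B_{2\delta}(Z)\subset B_R(X_0)\subset D_\mu$, which holds as long as $\delta$ is at most a fixed fraction of $R-r$. For the remaining points, those lying at distance $\geq c(R-r)$ from the free boundary, I would instead propagate the growth from $X_0$ itself: since $X_0$ is a free boundary point, $\sup_{B_\rho(X_0)}\psi\leq C\sqrt{\ld}\,\rho$ for $\rho$ up to the largest radius keeping the ball inside $B_R(X_0)$, which replaces the trivial bound by one of size $C\sqrt{\ld}(R-r)$; the interior estimate on $B_{c(R-r)}(X)$ then gives $|\g\psi(X)|\leq \f{C\sqrt{\ld}(R-r)}{c(R-r)}=C\sqrt{\ld}/c$. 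Tracking which radii keep every ball inside $B_R(X_0)$ produces the dependence of the constant on $(1-r/R)^{-1}$, and because each bound is proportional to $\sqrt{\ld}$ times a length that cancels, the final $C$ depends only on $\ld$ and $(1-r/R)^{-1}$.

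I expect the main obstacle to be precisely this last bookkeeping. Points far from the free boundary are dangerous because the naive bound $|\psi|\leq Q$ would reintroduce the flux $Q$; avoiding it forces one to carry the linear growth all the way from $X_0$ and to verify at each step that the balls used remain inside $B_R(X_0)\subset D_\mu$. Treating the two free boundaries $\p\{\psi>0\}$ and $\p\{\psi<Q\}$ on an equal footing — switching between $\psi$ and $Q-\psi$ according to which level set is nearest — is a routine but unavoidable complication, and the interior hypothesis $B_R(X_0)\subset D_\mu$ is what lets one sidestep any interaction with the fixed nozzle and ground boundaries.
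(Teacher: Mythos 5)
The paper itself does not prove this lemma at all: it is quoted verbatim from Lemma 5.1 of \cite{ACF1}, so the only benchmark is the classical Alt--Caffarelli--Friedman argument, and your reconstruction correctly identifies its skeleton — Lemma \ref{lf1} forces the normalized sphere averages $\f1s\fint_{\p B_s(Z)}\psi\,dS$ (resp.\ of $Q-\psi$) to stay below $C^*\sqrt{2\ld}$ at every free boundary point $Z$, this is upgraded to linear growth away from the free boundary, and the interior gradient estimate cancels the distance, which is exactly what expels $Q$ from the constant. Two steps, however, fail as written. First, your upgrade from sphere averages to $\sup_{B_{\rho/2}(Z)}\psi\leq C\sqrt\ld\,\rho$ rests on subharmonicity of $\psi$ in the whole ball $B_\rho(Z)$. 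But $\psi$ is subharmonic only across $\p\{\psi>0\}$; across $\p\{\psi<Q\}$ the distributional Laplacian is a \emph{negative} measure, and nothing in your setup keeps the opposite free boundary out of $B_{2\delta}(Z)$ (the nearest free boundary point to $X$ at distance $\delta$ only guarantees that $B_\delta(X)$ is clean, not $B_{2\delta}(Z)$). So the sub-mean-value inequality is unavailable precisely where you invoke it. The classical proof avoids subharmonicity altogether: if $\psi(X)\geq M\delta$ with $\delta=\mathrm{dist}(X,\p\{0<\psi<Q\})$, Harnack gives $\psi\geq cM\delta$ on $\p B_{\delta/2}(X)$, and the harmonic barrier in the annulus $B_\delta(X)\setminus B_{\delta/2}(X)$, which vanishes on $\p B_\delta(X)$, forces $\f1s\fint_{\p B_s(Z)}\psi\,dS\geq cM$ for small $s$, contradicting Lemma \ref{lf1} at $Z$ once $M>C\sqrt{\ld}$; run symmetrically with $Q-\psi$ when $Z\in\p\{\psi<Q\}$. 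This yields the pointwise dichotomy $\min\{\psi(Y),Q-\psi(Y)\}\leq C\sqrt\ld\,\delta_Y$, from which the sup bound on balls centered at a free boundary point follows by continuity and connectedness (the image of the ball is an interval containing $0$, resp.\ $Q$, so it cannot jump into the opposite regime when $Q>2C\sqrt\ld\,\rho$).

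Second, in the far-from-the-free-boundary case your claim $\sup_{B_\rho(X_0)}\psi\leq C\sqrt\ld\,\rho$ is simply false when the given free boundary point satisfies $X_0\in\p\{\psi<Q\}$: there $\psi(X_0)=Q$, so the supremum is at least $Q$ — the very quantity that must not enter the constant. What the argument needs, and what the corrected growth estimate delivers, is the \emph{oscillation} bound $\mathrm{osc}_{B_\rho(X_0)}\psi\leq C\sqrt\ld\,\rho$, obtained by applying the linear growth to $\psi$ or to $Q-\psi$ according to which phase $X_0$ carries, and then applying the interior gradient estimate to $\psi-\psi(X)$ (equivalently, noting $\g\psi=-\g(Q-\psi)$). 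With that substitution your bookkeeping of radii inside $B_R(X_0)$, and hence the $\left(1-\f rR\right)^{-1}$ dependence, goes through unchanged. A last, inessential slip: the monotonicity you assert for $\rho\mapsto\f1\rho\fint_{\p B_\rho(Z)}\psi\,dS$ is false in general (subharmonicity makes $\fint_{\p B_\rho}\psi\,dS$ monotone, not the normalized quotient), but you never actually use it — the radius-by-radius contradiction with Lemma \ref{lf1} in your parenthetical remark is the correct and sufficient mechanism.
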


Since $Q>2\sqrt{gH^3}$, it follows from Remark \ref{rb1} that
$\ld\geq\f{\max\{Q_1^2,(Q-Q_1)^2\}}{8H^2}+gH\geq \f32gH$, we have
that $\sqrt{2\ld-2gy}$ is analytic for any $y\in(0,H)$. Then the
regularity of the free boundary is obtained in Theorem 8.4 in
\cite{AC1}.
\begin{lemma}\label{lf8}
The free boundary $D_\mu\cap\p\{0<\psi_{\ld,Q_1,\mu}<Q\}$ is locally
analytic.
\end{lemma}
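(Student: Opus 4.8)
The plan is to obtain this as a direct application of the Alt--Caffarelli free-boundary regularity theory \cite{AC1} (Theorem 8.4), so the real work is in verifying that the variable Bernoulli weight of $J_{\ld,\mu}$ meets the required hypotheses. The decisive point is that the coefficient $2\ld-2gy$ is analytic and \emph{uniformly positive} on the strip $D_\mu=\{-\mu<x<\mu,\,0<y<H\}$. Indeed, the standing assumption $Q>2\sqrt{gH^3}$ forces $\ld\geq\f32 gH$ (see Remark \ref{rb1} and the computation preceding the statement), whence $2\ld-2gy\geq 2\ld-2gH\geq gH>0$ for all $0<y<H$; thus $\sqrt{2\ld-2gy}$ is real-analytic and bounded away from zero on $\overline{D_\mu}$. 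This is exactly where the hypothesis $Q>2\sqrt{gH^3}$ is genuinely used: it excludes stagnation points on the free boundary, at which $\sqrt{2\ld-2gy}$ would vanish and the theory would fail. Securing this uniform positivity is, I expect, the only substantive obstacle; the remainder is the standard regularity bootstrap.

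First I would assemble the quantitative estimates already available for the minimizer. Proposition \ref{lb1} and Lemma \ref{lb60} give that $\psi_{\ld,Q_1,\mu}$ is locally Lipschitz, while the non-degeneracy Lemma \ref{lf4} together with the density bounds of Lemma \ref{lf1} and Lemma \ref{lf3} yield linear growth of $\psi_{\ld,Q_1,\mu}$ (resp. of $Q-\psi_{\ld,Q_1,\mu}$) away from the free boundary from either side. These are precisely the inputs for the Alt--Caffarelli blow-up analysis: at any free-boundary point $X_0\in D_\mu\cap\p\{0<\psi_{\ld,Q_1,\mu}<Q\}$, the rescaled limits of $\psi_{\ld,Q_1,\mu}$ are nontrivial global solutions, and one identifies them as one-dimensional half-plane solutions, so that $X_0$ is a flat point. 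The variable weight enters only as a Lipschitz, lower-order perturbation of the constant-coefficient case and does not affect this step.

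Next I would invoke the flatness-implies-smoothness mechanism of \cite{AC1}: once a free-boundary point is flat, the $\e$-monotonicity argument upgrades the free boundary to a $C^{1,\alpha}$ graph in a neighborhood. Because Lemma \ref{lb6} already exhibits the free boundary globally as a continuous $y$-graph $x=k_{i,\ld,Q_1,\mu}(y)$, this local $C^{1,\alpha}$ regularity propagates along the entire free boundary. Finally, to reach analyticity I would bootstrap from $C^{1,\alpha}$: on such an arc $\psi_{\ld,Q_1,\mu}$ satisfies the overdetermined system $\Delta\psi_{\ld,Q_1,\mu}=0$ with $\psi_{\ld,Q_1,\mu}=0$ (resp. $=Q$) and $|\g\psi_{\ld,Q_1,\mu}|=\sqrt{2\ld-2gy}$ on the boundary arc. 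A partial hodograph--Legendre transform straightening this $C^{1,\alpha}$ arc to a fixed hyperplane converts the problem into a nonlinear uniformly elliptic boundary value problem on a fixed domain with analytic coefficients and analytic boundary data, the analyticity of the data coming precisely from the analyticity of $\sqrt{2\ld-2gy}$ established at the outset. Classical analytic elliptic regularity (Morrey--Nirenberg) then forces the transformed solution, and hence the free boundary itself, to be real-analytic, which is the assertion.
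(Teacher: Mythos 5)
Your proposal is correct and takes essentially the same route as the paper: the paper's entire proof consists of noting that $Q>2\sqrt{gH^3}$ forces (via Remark \ref{rb1}) $\ld\geq\f32gH$, so that $\sqrt{2\ld-2gy}$ is analytic and bounded away from zero on the strip, and then invoking Theorem 8.4 of \cite{AC1}. The blow-up, flatness, and hodograph--Legendre steps you sketch are precisely the internal machinery of that cited theorem, so they add detail but not a different argument.
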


\begin{remark}\label{rl1}
Those lemmas still hold for the local minimizer $\psi_{\ld,Q_1}$ to
the variational problem $(P_{\ld,Q_1})$. A local minimizer
$\psi_{\ld,Q_1}$ to the variational problem $(P_{\ld,Q_1})$ means
that \be\label{h1}J_{E}(\psi_{\ld,Q_1})\leq J_{E}(\psi)\ \ \text{for
any $\psi\in H^1(E)$},\ \ \psi=\psi_{\ld,Q_1}\ \ \text{on} \ \p
E,\ee for any bounded domain $E\subset \O$ with smooth boundary,
where the functional
$$J_E(\psi)=\int_{E}|\nabla \psi|^2+\sqrt{2\lambda-2gy}
\chi_{\{0<\psi<Q\}\cap D} dxdy.$$
\end{remark}

\bigskip

{\bf Conflict of interest. } The authors declare that they have no
conflict of interest.

 \

\bibliographystyle{plain}

\begin{thebibliography}{10}
\bibitem{AC1}
H. W. Alt, L. A. Caffarelli, \newblock Existence and regularity for
a minimum problem with free boundary, {\it J. Reine Angew. Math.,}
{\bf 325}, 405-144, (1981).

\bibitem{ACF1}
H. W. Alt, L. A. Caffarelli, A. Friedman, \newblock Asymmetric jet
flows, {\it Comm. Pure Appl. Math.,} {\bf 35}, 29-68, (1982).

\bibitem{ACF2}
H. W. Alt, L. A. Caffarelli, A. Friedman, \newblock Jet flows with
gravity, {\it J. Reine Angew. Math.,} {\bf 35}, 58-103, (1982).

\bibitem{ACF3}
H. W. Alt, L. A. Caffarelli, A. Friedman, \newblock Axially
symmetric jet flows, {\it Arch. Rational Mech. Anal.,} {\bf 81},
97-149, (1983).

\bibitem{ACF4}
H. W. Alt, L. A. Caffarelli, A. Friedman, \newblock Variational
problems with two phases and their free boundaries, {\it Trans.
Amer. Math. Soc.,} {\bf 282}, 431-461, (1984).

%\bibitem{ACF5}
%H. W. Alt, L. A. Caffarelli, A. Friedman, \newblock Jets with two
%fluids. I. One free boundary, {\it Indiana Univ. Math. J.,} {\bf
%33}, 213-247, (1984).

%\bibitem{ACF6}
%H. W. Alt, L. A. Caffarelli, A. Friedman, \newblock Jets with two
%fluids. II. Two free boundaries, {\it Indiana Univ. Math. J.,} {\bf
%33}, 367-391, (1984).

%\bibitem{ACF10}
%H. W. Alt, L. A. Caffarelli, A. Friedman, \newblock Abrupt and
%smooth separation of free boundaries in flow problems, {\it Ann.
%Scuola Norm. Sup. Pisa Cl. Sci.,} {\bf 12}, 137-172, (1985).

\bibitem{AFT} C. J. Amick, L. E. Fraenkel, J. F. Toland, On the Stokes conjecture for the wave of extreme form, {\em  Acta Math.}, {\bf 148}, 193-214, (1982).

%\bibitem{B} B. Barker, M. A. Johnson, P. Noble, Stability of Viscous St. Venant Roll Waves: From Onset to Infinite Froude Number Limit,
%{\em  Journal of Nonlinear Science}, {\bf 27}, 285-342, (2017).

%\bibitem{Bers} L. Bers, {\it Mathematical Aspects of Subsonic and Transonic Gas Dynamics}, Surveys in Applied Mathematics, Vol.3, John Wiley and Sons, Inc., New York, 1958.

\bibitem{BZ} G. Birkhoff, E. H. Zarantonello, {\em Jets, Wakes and
Cavities}, Academic Press, New York, (1957).

\bibitem{BM}
M. Brillouin, \newblock Les surfaces de glissement de Helmholtz et
la r\'esistance des fluides, {\it  Ann. Chim. Phys.}, {\bf 23},
145-230, (1911).

%\bibitem{CF} L. A. Caffarelli, A. Friedman, \newblock Axially symmetric infinite cavities, {\it  Indiana Univ. Math. J.,} {\bf 31}, 135-160, (1982).


\bibitem{CS}  C. D. Donaldson, R. S. Snedeker, A study of free jet impingement. Part 1. Mean properties of free and impinging jets, {\em J. Fluid Mech.}, {\bf 45}, 281-319, (1971).



\bibitem{CDW}
J. F. Cheng, L. L. Du, Y. F. Wang, On incompressible oblique
impinging jet flows,  to appear in  {\em J. Differential
 Equations}, https://doi.org/10.1016/j.jde.2018.06.021,
 (2018).


%\bibitem{DC} F. Dias, P. Christodoulides, Ideal jets falling under gravity, {\em Physics of Fluids A: Fluid Dynamics}, {\bf 3},
%1711-1717, (1991).

\bibitem{DEF} F. Dias, A. R. Elcrat, L. N. Trefethen, Ideal jet
flow in two dimensions, {\em J. Fluid Mech.}, {\bf 185}, 275-288,
(1987).


%\bibitem{DC1} F. Dias, J. M. Vanden-Broeck, Flows emerging from a nozzle and falling under gravity, {\em J. Fluid Mech.}, {\bf 213},
%465-477, (1990).



\bibitem{EV}L. C.  Evans, \newblock Measure theory and fine properties of functions,
Studies in Advanced Mathematics, CRC Press, 1992.


%\bibitem{Fi}R. Finn, \newblock Some theorems on discontinuous plane fluid motions, {\it J. Analyse Math.}, {\bf4}, 246--291, (1956).

\bibitem{FA1}
A. Friedman, {\it Variational principles and free-boundary
problems}, Pure and Applied Mathematics, John Wiley Sons, Inc., New
York, 1982.

%\bibitem{FV} A. Friedman, T. I. Vogel, \newblock Cavitational flow in a channel with oscillatory wall, {\it Nonlinear Anal.}, {\bf 7}, 1175--1192, (1983).

%\bibitem{FA2} A. Friedman, \newblock Axially symmetric cavities in rotational flows, {\it Comm. Partial Differential Equations,} {\bf 8}, 949--997, (1983).

%\bibitem{FA3} A. Friedman, {\it Mathematics in industrial problems}, II, \newblock I.M.A. Volumes in Mathematics and its Applications, Vol.24, Springer-Verlag, New York, 1989.

%\bibitem{GLS} P. R. Garabedian, H. Lewy, M. Schiffer, Axially symmetric cavitational flow, {\em Annals of Math.}, {\bf 56} No.2, 560-602, (1952).

\bibitem{GI} W. A. Gifford, A finite element analysis of isothermal fiber formation, {\em Phys. Fluid}, {\bf 25}, 219-225, (1982).



\bibitem{GT}
D. Gilbarg, N. S. Trudinger,
\newblock Elliptic Partial Differential Equations of Second Order,
\newblock {\em Classics in Mathematics}. Springer-Verlag, Berlin, 2001.

\bibitem{Gu} M. I. Gurevich, {\em The theory of jets in an ideal fluid}, International Series of Monographs in Pure and Applied Mathematics, Vol. 93,  Pergamon Press, Oxford-New York-Toronto, Ont. 1966.

%\bibitem{HYC} C. M. Harwood, Y. L. Young, S. L. Ceccio, Ventilated cavities on a surface-piercing hydrofoil at moderate Froude numbers: cavity formation,
%elimination and stability, {\em  J. Fluid Mech.}, {\bf 800}, 5-56,
%(2016).

\bibitem{HW} J. Hureau, R. Weber, Impinging free jets of ideal fluid, {\em J. Fluid Mech.}, {\bf 372}, 357-374, (1998).

\bibitem{JA} C. Jacob, Introductzon Mathemat\'esque \'{a} la M\'ecanque des Fluides, Gauthier-Villars, Paris, 1959.

\bibitem{JB} D. R. Jenkins, N. G. Barton, Computation of the free-surface shape of an inviscid jet incident on a porous wall,
{\em IMA J. Appl. Math.}, {\bf 41}, 193-296, (1988).


\bibitem{KB} A. C. King, M. I. G. Bloor, Free-surface flow of a stream obstructed by an arbitrary bed topography, {\em Quart. J. Mech. Appl. Math.}, {\bf43}, 87-106, (1990).


\bibitem{MT} L. M. Milne-Thomson, Theoretical Hydrodynamics, (5th edn), Macmillan, 1968.


%\bibitem{PP} W. D. Peng, D. F. Parker, An ideal fluid jet impinging on an uneven wall, {\it J. Fluid Mech.}, {\bf 333}, 231--255, (1997).

\bibitem{P} P. I. Plotnikov, Proof of the Stokes conjecture in the theory of surface waves, {\em Stud. Appl. Math. }, {\bf 108}, 217-244, (2012).

%\bibitem{RD} D. Rockwell, Oscillations of impinging shear layers, {\em AIAA. J.}, {\bf 21} (5), 645-664, (1983).


%\bibitem{PSPB} A. Pal, S. Sarkar, A. Posa, E. Balaras,  Direct numerical simulation of stratified flow past a sphere at a subcritical
%Reynolds number of 3700 and moderate Froude number, {\em J. Fluid
%Mech.}, {\bf 826}, 5-31, (2017).


%\bibitem{Shif1} M. Shiffman, On free boundaries of an ideal fluid, The principle of analytic continuation. I, {\em Comm. Pure Appl. Math.}, {\bf 1}, 89-99, (1948).


%\bibitem{Shif2} M. Shiffman, On free boundaries of an ideal fluid, II, {\em Comm. Pure Appl. Math.}, {\bf 2}, 1-11, (1949).




%\bibitem{ST2} R. Stavre,  The impact of a jet with two fluids on a porous wall, {\it Internat. Ser. Numer. Math.}, {\bf 107}(1), Birkh\"auer, Basel, 11--23, (1992).

%\bibitem{SW} J. Stervens, B. W. Webb, Measurements of the free surface flow structure under an impinging, free liquid jet, {\it Tran. ASME J. Heat Transfer}, {\bf 114}, 79--84, (1992).

\bibitem{S} G. G. Stokes, Considerations relative to the greatest height of oscillatory irrotational
waves which can be propagated without change of form, in
Mathematical and Physical Papers, Vol. I, pp. 225-228, Cambridge
University Press, Cambridge, 1880.

\bibitem{W} W. A. Strauss, Steady water waves, {\em Bull. Amer. Math. Soc.}, {\bf 47}, 671-694, (2010).

\bibitem{TU} E. O. Tuck, The shape of free jets of water under gravity, {\em J. Fluid Mech.}, {\bf 76}, 635-640, (1976).

%\bibitem{BK} J. M. Vanden-Broeck, J. B. Keller, Jets rising and falling under gravity, {\em J. Fluid Mech.}, {\bf 124}, 335-345, (1982).

\bibitem{V} E. Varvaruca, Singularities of Bernoulli free boundaries, {\em Comm. Partial Differential Equations},
 {\bf 31}, 1451-1477, (2006).

\bibitem{VW1} E. Varvaruca, G. S. Weiss, A geometric approach to generalized Stokes conjectures, {\em Acta Math.},
 {\bf 206}, 363-403, (2011).

\bibitem{VW2} E. Varvaruca, G. S. Weiss, The Stokes conjecture for waves with vorticity, {\em Ann. Inst. H. Poincar\'{e} Anal. Non Lin\'{e}aire},
 {\bf 29}, 861-885, (2012).


\bibitem{Wu} T. Y. Wu, Cavity and wakes flows, {\em Annual Reviews of
Fluid Mechnics}, {\bf 4}, 243-284, (1972).




























\end{thebibliography}

\end{document}